\documentclass[11pt,a4paper]{amsart}
\usepackage[T1]{fontenc}
\usepackage{lmodern}
\usepackage[margin=20mm]{geometry}
\usepackage{amsmath,amssymb,amscd,mathtools}
\usepackage{microtype}
\usepackage{enumitem}
\usepackage{xcolor}
\usepackage[colorlinks=true,linkcolor=blue!55!black,
  citecolor=blue!55!black,urlcolor=blue!55!black]{hyperref}
\setlist[enumerate]{label=\textup{(\arabic*)},leftmargin=2.3em}

\newtheorem{theorem}{Theorem}[section]
\newtheorem{lemma}[theorem]{Lemma}
\newtheorem{proposition}[theorem]{Proposition}
\newtheorem{corollary}[theorem]{Corollary}
\newtheorem{introtheorem}{Theorem}

\theoremstyle{definition}
\newtheorem{definition}[theorem]{Definition}
\newtheorem{conjecture}[theorem]{Conjecture}
\theoremstyle{remark}
\newtheorem{remark}[theorem]{Remark}
\numberwithin{equation}{section}

\DeclareMathOperator{\Spa}{Spa}
\DeclareMathOperator{\Spec}{Spec}
\DeclareMathOperator{\Lie}{Lie}
\DeclareMathOperator{\End}{End}

\DeclareMathOperator{\GL}{GL}
\DeclareMathOperator{\Gal}{Gal}
\DeclareMathOperator{\Hom}{Hom}
\DeclareMathOperator{\im}{im}
\DeclareMathOperator{\Tr}{Tr}
\DeclareMathOperator{\Vect}{Vect}
\DeclareMathOperator{\Rig}{Rig}

\newcommand{\Cp}{\mathbb C_p}
\newcommand{\Qp}{\mathbb Q_p}
\newcommand{\Zp}{\mathbb Z_p}
\newcommand{\Z}{\mathbb Z}
\newcommand{\OO}{\mathcal O}
\newcommand{\Oh}{\widehat{\mathcal O}}
\newcommand{\TT}{\mathbb T}
\newcommand{\proet}{\text{pro-\'et}}
\newcommand{\la}{G\text{-}\mathrm{la}}
\newcommand{\jla}{(G\times\Gamma)\text{-}\mathrm{la}}

\title{On the Surjectivity of the Geometric Sen Map of Perfectoid Torsors}
\author{Tian Qiu}
\address{Beijing International Center for Mathematical Research, Peking University, Beijing, China}
\email{\nolinkurl{qiu.tian@pku.edu.cn}}
\author{Jiahong Yu}
\address{Academy of Mathematics and Systems Science, Chinese Academy of
Sciences, Beijing, China; Morningside Center of Mathematics, Chinese Academy
of Sciences, Beijing, China}
\email{\nolinkurl{yu_jh@amss.ac.cn}}
\date{}

\begin{document}

\begin{abstract}
We study surjectivity of the geometric Sen map for perfectoid
$G$-torsors over smooth rigid analytic spaces over $\Cp$, where
$G$ is a compact $p$-adic Lie group. We construct an affinoid
perfectoid $\Zp^2$-torsor over a smooth curve whose Sen map is
not surjective, thereby disproving Rodr\'\i guez Camargo's
conjectured implication from perfectoidness to surjectivity.
We prove that the Sen map of every perfectoid $G$-torsor over
a smooth curve is nevertheless surjective on a Zariski open
dense subset. Finally, we show that locally analytic
decompletion implies Sen surjectivity for diamondian affinoid
perfectoid torsors.
\end{abstract}

\maketitle
\setcounter{tocdepth}{1}
\tableofcontents

\section{Introduction}

\subsection{Arithmetic motivation}\label{sec:intro-arithmetic}
Let $K/\Qp$ be a finite extension and let $L/K$ be a Galois extension with $p$-adic Lie Galois
group $\Gamma$. Put $\mathfrak g=\Lie\Gamma$.
Decompletion along the cyclotomic tower yields the arithmetic Sen
element $\Theta_{L/K}\in\Cp\otimes_{\Qp}\mathfrak g$
\cite{Sen1973,Sen1980}.
We view this element as a $\Cp$-linear functional on the dual Lie
algebra:
\[
 \theta_{L/K}:\Cp\otimes_{\Qp}\mathfrak g^\vee
 \longrightarrow\Cp,
 \qquad \lambda\longmapsto\lambda(\Theta_{L/K}).
\]
Thus $\theta_{L/K}$ is surjective if and only if
$\Theta_{L/K}\ne0$.
A fundamental theorem in the $p$-adic Hodge theory is the relationship between the perfectoidness of $\widehat L$ and $\theta_{L/K}$. Precisely, write $G_K$ for the absolute Galois group of $K$, $I_K\subseteq G_K$
for its inertia subgroup, and $I(L/K)$ for the image of $I_K$ in
$\Gamma$. For a finite-dimensional continuous representation
$\rho_V:G_K\to\GL(V)$ over $\Qp$, Sen's criterion states that
\begin{equation}\label{eq:intro-sen-zero}
 \Theta_V=0
 \quad\Longleftrightarrow\quad
 V\text{ is }\Cp\text{-admissible}
 \quad\Longleftrightarrow\quad
 \rho_V(I_K)\text{ is finite},
\end{equation}
where $\Theta_V\in \mathrm{End}_{\mathbb C_p}(V\otimes_{\mathbb Q_p}\Cp)$ is the Sen operator of $V$; see \cite[\S~5]{Sen1973} and
\cite[Remark~1.4(ii)]{BergerColmez2016}.
Here the middle condition means that
$\dim_K(\Cp\otimes_{\Qp}V)^{G_K}=\dim_{\Qp}V$.
For a representation $\rho:\Gamma\to\GL(V)$, the Sen operator of
the composite $G_K\to\Gamma\xrightarrow{\rho}\GL(V)$ is
$d\rho(\Theta_{L/K})$.
Sen's ramification theorem \cite{Sen1972}, together with
\eqref{eq:intro-sen-zero}, therefore gives
\begin{equation}\label{eq:intro-arithmetic-perfectoid}
 \widehat L\text{ is perfectoid}
 \quad\Longleftrightarrow\quad
 I(L/K)\text{ is infinite}
 \quad\Longleftrightarrow\quad
 \theta_{L/K}\text{ is surjective};
\end{equation}
see \cite[Proposition~6.13 and Remark~6.14]{He2026}.

Another important property of p-adic Lie Galois extension is the locally analytic decompletion:  Berger and Colmez prove that,
for every finite-dimensional continuous semilinear
$\widehat L$-representation $W$ of $\Gamma$, the multiplication map
\begin{equation}\label{eq:intro-arithmetic-decompletion}
 \widehat L\otimes_{\widehat L^{\Gamma\text{-}\mathrm{la}}}
 W^{\Gamma\text{-}\mathrm{la}}
 \xrightarrow{\ \sim\ }W
\end{equation}
is an isomorphism \cite[Theorem~1.7]{BergerColmez2016}. Here
$\widehat L^{\Gamma\text{-}\mathrm{la}}$ is a field and
$W^{\Gamma\text{-}\mathrm{la}}$ is finite-dimensional over it.
Combining these two theorems, perfectoidness of $\widehat L$ implies both surjectivity of
the arithmetic Sen map and locally analytic decompletion.

These arithmetic results lead to the geometric questions studied
in this paper: does perfectoidness force surjectivity of the Sen
map, and how is this surjectivity related to locally analytic
decompletion?
Pan developed a geometric Sen operator for curves \cite{Pan2022},
and Rodr\'\i guez Camargo constructed geometric Sen operators for
general log-smooth rigid analytic spaces \cite{RodriguezCamargo2026}.
In the smooth setting, let $X/\Cp$ be a smooth rigid analytic space and
let $\pi:\widetilde X\to X$ be a profinite \'etale torsor under a
compact $p$-adic Lie group $G$. Put $\mathfrak g=\Lie G$ and let
$\mathfrak g_\pi^\vee$ be the coadjoint local system. The geometric
Sen map is the morphism on $X_{\proet}$
\[
 \theta_{\widetilde X}:
 \Oh_X\otimes_{\Qp}\mathfrak g_\pi^\vee
 \longrightarrow
 \Oh_X\otimes_{\OO_X}\Omega^1_{X/\Cp}(-1).
\]
The arithmetic map $\theta_{L/K}$ takes values in the
one-dimensional space $\Cp$; its geometric analogue takes values
in the cotangent directions of $X$. The perfectoidness criterion
\eqref{eq:intro-arithmetic-perfectoid} therefore motivates the
following conjecture.

\begin{conjecture}[Rodr\'\i guez Camargo]
\label{conj:camargo-perfectoid}
Let $X/\Cp$ be a smooth rigid analytic space, and let
$\widetilde X\to X$ be a profinite \'etale torsor under a compact
$p$-adic Lie group. Then the geometric Sen map is surjective if and
only if $\widetilde X$ is perfectoid.
\end{conjecture}

This is the smooth case of
\cite[Conjecture~3.3.5]{RodriguezCamargo2026}.
The precise tower notions used here are given in
Definition~\ref{def:four-perfectoid-notions}.

\subsection{A perfectoid torsor with nonsurjective Sen map}
\label{sec:intro-counterexample}
The arithmetic analogy and the decompletion criterion make
Conjecture~\ref{conj:camargo-perfectoid} a natural expectation.
Tongmu He proved an implication from Sen surjectivity to
Riemann--Zariski stalkwise pre-perfectoidness \cite{He2026}, and
Bellovin, Cai and Howe established the conjectural
characterization for covers of abelian varieties and related
commutative rigid analytic groups \cite{BCH25}. In forthcoming joint work of the first author with Chen, Li, and Yang, it is proved that if $X$ is smooth of dimension $n$ and $G=\mathbb{Z}_p^n$, then the perfectoidness of $\widetilde X$ implies the surjectivity of $\theta_{\widetilde X}$.
Nevertheless, the implication from perfectoidness to surjectivity
fails in general, even in dimension one.

\begin{introtheorem}[ = Theorem \ref{thm: perfd not surjective} ]\label{thm:counterexample}
There exist a smooth connected rigid analytic curve $X/\Cp$ and
an affinoid perfectoid profinite \'etale $\Zp^2$-torsor
$\widetilde X\to X$ whose geometric Sen map is not surjective.
\end{introtheorem}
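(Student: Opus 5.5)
The idea is to pull back the perfectoid torus tower along a map from a curve that is \emph{not} an immersion at one point, namely a cuspidal parametrization. Let $T=\Spa(\Cp\langle T_1^{\pm1},T_2^{\pm1}\rangle)$ and let $T_\infty\to T$ be the standard affinoid perfectoid $\Zp^2$-torsor obtained by adjoining all $p$-power roots of $T_1,T_2$. Its geometric Sen map sends the standard basis of $(\Zp^2)^\vee$ to $d\log T_1,d\log T_2$ (up to the Tate twist). Take $X$ to be a closed disc of radius $r<1$ with coordinate $t$, and let $\phi:X\to T$ be $t\mapsto(1+t^2,1+t^3)$. Put $\widetilde X:=X\times_T T_\infty$, a profinite \'etale $\Zp^2$-torsor over $X$.

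The first step is the Sen computation. By functoriality of the geometric Sen map under pullback of torsors (Rodr\'\i guez Camargo's construction), $\theta_{\widetilde X}$ sends the two basis vectors to
\[
\phi^*d\log T_1=\tfrac{2t\,dt}{1+t^2},\qquad \phi^*d\log T_2=\tfrac{3t^2\,dt}{1+t^3}.
\]
Both forms vanish at $t=0$. Hence $\theta_{\widetilde X}$ is not surjective at the origin, and so it is not surjective as a map of sheaves.

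The second step, which I expect to be the main obstacle, is to show that $\widetilde X$ is affinoid perfectoid. First I would check that $\phi$ is injective on points, since $(t^2,t^3)$ determines $t$. Then $\phi$ is finite onto a Zariski closed curve $C\subset T$, and $X\to C$ is its normalization: it is bijective and an isomorphism away from the cusp, and at the cusp $\OO(C)$ is generated by $u=t^2$ and $v=t^3$ with $t=v/u$. By Scholze's result that Zariski closed subsets of affinoid perfectoid spaces are affinoid perfectoid, $C_\infty:=C\times_T T_\infty$ is affinoid perfectoid with ring $R$, say. Then $\widetilde X=C_\infty\times_C X$ has coordinate ring the completion of $R\otimes_{\OO(C)}\OO(X)$. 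This is $R$ with the element $t$ adjoined subject to $t^2=u$ and $t^3=v$.

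I would then show that $R$ already contains a unique such $t$, so the adjunction changes nothing up to uniformization. The key input is that perfectoid rings are seminormal. On the tilt $R^\flat$, which is perfect, suppose $x^2,x^3\in R^\flat$ for $x$ in the total ring of fractions. Then $x^n\in R^\flat$ for all $n\ge2$, in particular $x^{p^k}\in R^\flat$, and perfectness gives $x\in R^\flat$. Via the tilting equivalence, or more directly via the Bhatt--Scholze perfectoidization, since $X\to C$ is a universal homeomorphism inducing isomorphisms on completed residue fields, the perfectoidization of $R\widehat\otimes_{\OO(C)}\OO(X)$ is $R$ itself. One must additionally check that this completed tensor product is already uniform, so that it equals its perfectoidization. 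I would do this by exhibiting $t$ explicitly in $R$ as $T_2^{1/p^\infty}$-compatible $p$-power roots: the element $(T_2-1)/(T_1-1)$ should have compatible $p$-power roots in $R^\flat$ by the seminormality argument, and these untilt to $t$. The nilpotents and non-uniformity in $R\otimes\OO(X)$ then die.

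Concluding, $\widetilde X\cong C_\infty$ is affinoid perfectoid, while its Sen map vanishes at $t=0$. Since $X$ is a smooth connected curve, this proves Theorem~\ref{thm:counterexample}.
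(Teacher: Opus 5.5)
Your strategy is the paper's: pull the toric tower back along the cusp normalization, get $\im(\theta_{\widetilde X})\subseteq t\,\Oh_X\,dt(-1)$ from functoriality, and use seminormality of perfectoid rings to show that adjoining $t$ changes nothing.

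Two small repairs are needed first. The map $\phi$ is not finite into the whole torus, so replace $T$ by the rational subdomain $\{|T_1-1|\le r^2,\ |T_2-1|\le r^3\}$, as the paper does with $Y$. Also, you need the \emph{tower} over $C$, with rings $R_\lambda=A_\lambda/IA_\lambda$, to have perfectoid spectral completion $R$; knowing only that the Zariski closed subset of $T_\infty$ is perfectoid is not enough. The paper cites Kedlaya--Liu for this.

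The genuine gap is the step you flagged as the main obstacle. You must show that the spectral completion $B_\infty$ of $\varinjlim_\lambda B_\lambda$, where $B_\lambda=R_\lambda\otimes_{\OO(C)}\OO(X)$, equals $R$. Your proposed justification fails in three places.
\begin{enumerate}
\item Seminormality must be used in $R$ itself, in Swan's form: $u^3=v^2$ in $R$ gives a unique $a\in R$ with $a^2=u$ and $a^3=v$. Your argument on $R^\flat$ does not transfer, because $u,v$ are not $\sharp$-images. Also, $(T_2-1)/(T_1-1)=v/u$ is not an element of $R$, since $u$ vanishes on the nonempty fibre over the cusp.
\item One has $R\otimes_{\OO(C)}\OO(X)=R[T]/(T^2-u,T^3-v)\cong R\oplus(R/uR)\,\epsilon$ with $\epsilon=T-a$ and $\epsilon^3=0$. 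Since $R/uR\neq0$, this ring is never uniform, so the uniformity check you propose cannot succeed.
\item Perfectoidization only yields the diamondian statement (Proposition~\ref{prop:finite-diamondian-pullback}). The Bellovin--Cai--Howe double cover satisfies that statement but is not affinoid perfectoid. So ``the nilpotents die'' is exactly the claim that needs proof.
\end{enumerate}

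What is missing is a norm comparison for $\jmath_\lambda\colon B_\lambda\to R$, $T\mapsto a$. The paper obtains it from two facts: $\mathcal M(R)\to\mathcal M(R_\lambda)$ is surjective, and $\mathcal M(B_\lambda)\to\mathcal M(R_\lambda)$ is bijective by universal injectivity. Together these make $\jmath_\lambda$ isometric for the spectral norms.

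In your special case there is a shorter route.
\begin{itemize}
\item $\jmath_\lambda$ is bounded, so $\rho_R\circ\jmath_\lambda\le\rho_{B_\lambda}$, and it induces $\beta\colon B_\infty\to R$.
\item The canonical map $\alpha\colon R\to B_\infty$ satisfies $\beta\alpha=\mathrm{id}$ by density.
\item $\alpha\beta=\mathrm{id}$ because $B_\infty$ is uniform, hence reduced. Thus $\alpha(a)$ and the image of $t$ are both the unique element with square $u$ and cube $v$.
\end{itemize}

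With this inserted, your Sen computation finishes the proof after one more line. A pro-\'etale local preimage of $dt(-1)$ would force $1\in t\,\Oh_X$ at a point over $t=0$, which is impossible.
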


Here affinoid perfectoid is meant in the sense of
\cite[Definition~4.3(i)]{Scholze2013}. The base of the construction is the closed unit disc with parameter $t$. We pull a
two-dimensional toric tower back along the cusp parametrization
$t\mapsto(t^2,t^3)$. This morphism is finite and universally
injective, so the pullback remains affinoid perfectoid by
Theorem~\ref{thm:ui-pullback}. Its differential has image
$t\OO_X\,dt$. Functoriality forces the Sen map to have image in
$t\Oh_X\,dt(-1)$, and it is therefore not surjective at the origin.
Theorem~\ref{thm:intro-decompletion} consequently also gives a
failure of locally analytic decompletion near that point.

Our example satisfies the strong affinoid perfectoid condition.
This distinguishes it from the example in
\cite[Remark~5.5]{BCH25}, which is diamondian
affinoid perfectoid but not affinoid perfectoid; see
Remark~\ref{rem:bchh-example}.

\subsection{Generic surjectivity for curves}\label{sec:intro-generic}
In the counterexample, the Sen map is surjective away from the
origin. This suggests allowing a proper closed analytic exceptional
locus in the perfectoid-to-surjective direction of
Conjecture~\ref{conj:camargo-perfectoid}.

\begin{conjecture}[Generic Sen surjectivity]\label{conj:generic-surjectivity}
Let $X/\Cp$ be a smooth connected rigid analytic space, and let
$\widetilde X\to X$ be a profinite \'etale torsor under a compact
$p$-adic Lie group. Assume that its tower is perfectoid in the sense
of Definition~\ref{def:four-perfectoid-notions}\textup{(3)}.
Then there exists a Zariski open dense subset $U\subseteq X$ such
that $\theta_{\widetilde X}|_{U_{\proet}}$ is surjective.
\end{conjecture}

We prove this conjecture for smooth curves.

\begin{introtheorem}[ = Theorem \ref{thm:curve} ]\label{thm:intro-curve}
Let $X/\Cp$ be a smooth connected rigid analytic curve, and let
$\widetilde X\to X$ be a perfectoid profinite \'etale torsor under
a compact $p$-adic Lie group. Then its geometric Sen map is surjective
at every type~2 point. Its nonsurjectivity locus is a locally finite
closed analytic subset consisting of classical points, and is finite
if $X$ is quasicompact. In particular,
Conjecture~\ref{conj:generic-surjectivity} holds for $X$.
\end{introtheorem}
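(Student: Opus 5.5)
We use that $X$ is a curve, so the target $\Oh_X\otimes\Omega^1_X(-1)$ has rank one. Surjectivity of $\theta_{\widetilde X}$ therefore reduces to showing that the image sheaf is not contained in $\mathfrak m_x\cdot\Oh_X\,\Omega^1(-1)$ at the relevant points.

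Locally on an affinoid $U=\Spa(A,A^+)$ with a coordinate $t$, we work with the Sen element
\[
\Theta\in \bigl(\Oh_X\otimes\mathfrak g\bigr)\otimes\Omega^1(-1).
\]
After pairing with a basis of $\mathfrak g^\vee$, this becomes a tuple $(f_1,\dots,f_d)$ with $\theta=\sum f_i\,dt(-1)$, where the $f_i$ are functions. By the construction of the geometric Sen operator (Pan / Rodr\'\i guez Camargo), these $f_i$ descend: they lie in a finite extension of $A$, i.e.\ in $\OO$ of a finite étale cover of $U$, twisted by the Galois/adjoint action. So the $f_i$ are rigid analytic functions on a finite étale cover, and the nonsurjectivity locus is their common zero locus pushed down to $X$. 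This is a Zariski closed analytic subset of $X$.

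The key step is to show that the $f_i$ do not all vanish at any type~2 point, and in particular do not all vanish identically. Suppose they all vanish at a type~2 point $\eta$. We localize at $\eta$ by passing to the completed residue field $\mathcal H(\eta)$. This is a complete field of transcendence degree one over $\Cp$ (its residue field is a function field of a curve over $\overline{\mathbb F}_p$). The perfectoid hypothesis in the sense of Definition~\ref{def:four-perfectoid-notions}(3) gives that the fibre tower over $\mathcal H(\eta)$ is perfectoid after completion. Vanishing of the Sen map at $\eta$ means the tower over $\mathcal H(\eta)$ has trivial geometric Sen operator.

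We then argue via a Sen-type theory over $\mathcal H(\eta)$, in the spirit of He's stalkwise results \cite{He2026}. Vanishing of the Sen operator implies that the $\widehat{\mathcal H(\eta)}$-semilinear representation attached to $\mathfrak g^\vee$ is "$\Oh$-admissible" up to finite monodromy. Equivalently, the tower is almost unramified in the differential direction: it is generated, up to finite extensions, by extensions that do not involve $d t$. Over a type~2 point, an extension field with no differentials cannot become perfectoid, because the residue field extension stays finite over a function field in one variable and hence is not perfect. Concretely, we check the differential criterion for perfectoidness: $\Omega$ of the tower must be killed, which forces $dt$ to be hit by the Sen map. This is the main obstacle. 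The plan is to use the Faltings extension: perfectoidness forces the Faltings extension class to become split along the tower, and the Sen map is the connecting map of that class. Vanishing at $\eta$ would then contradict the nonsplitting of the Faltings extension over $\mathcal H(\eta)$.

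Once type~2 points are excluded, the zero set of the $f_i$ is a closed analytic subset containing no type~2 point. On a curve this means it is discrete and consists of classical (type~1) points. Types~3 and~4 are also excluded, since they do not lie in any nontrivial analytic closed subset of a one-dimensional space other than the whole space. The set is therefore locally finite, and finite on quasicompact $X$. Its complement is Zariski open dense, which proves Conjecture~\ref{conj:generic-surjectivity} for curves.
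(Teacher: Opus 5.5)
\textbf{There is a genuine gap.} Your outer frame agrees with the paper. On a curve the target has rank one. The bad locus is the descended zero locus of the Sen coefficients, hence closed analytic (Lemma~\ref{lem:analytic-rank-locus}). Once type~2 points are excluded, it is a proper closed analytic subset, so it is locally finite and consists of classical points. The gap is the step you yourself call the main obstacle, and that step is the entire content of the theorem. Citing \cite{He2026} does not help: He's stalkwise result goes the other way, from surjectivity to pre-perfectoidness.

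Two ingredients are missing.
\begin{enumerate}
\item Pointwise vanishing at $\eta$ does not by itself give a ``trivial Sen operator over $\mathcal H(\eta)$''. The paper uses that a closed analytic set containing a type~2 point contains a neighbourhood, so $\theta_{\widetilde X}$ vanishes there. Zero-Sen descent, applied to a representation $\rho$ with injective differential (Ado), then gives a period matrix $M\in\GL_r(\widehat L)$ with $\sigma(M)=M\rho(\sigma)$ for the branch extension $L/K$, where $K=\mathcal H(\eta)$.
\item One must then show that such a period matrix forces $\rho(I)$, and hence the inertia $I$, to be finite. Your sentence ``an extension with no differentials cannot become perfectoid, because the residue field extension stays finite'' presupposes exactly this. $\widehat L$ becomes perfectoid precisely through infinite wild inertia, which makes $\kappa(L)/\kappa(K)$ infinite and purely inseparable. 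So finiteness of the residue extension is equivalent to what has to be proved.
\end{enumerate}
Already over $p$-adic fields, the implication $\Theta_V=0\Rightarrow\rho_V(I_K)$ finite rests on Sen's ramification theory. Over a type~2 field, whose residue field has $p$-rank one, the paper rebuilds that theory (Lemma~\ref{lem:period-inertia}):
\begin{itemize}
\item Wewers's bounds on breaks (Theorem~\ref{thm:wewers}) are transferred to completed unramified Lie extensions. This uses discretely valued models with the same residue fields and ramification numbers (Lemma~\ref{lem:unramified-approximation}).
\item This yields eventual linear growth $b_n=n+c$ of upper breaks (Proposition~\ref{prop:linear-break-growth}).
\item That growth contradicts the bounded conductors of the approximate coboundaries obtained by linearizing $\sigma(M)=M\sigma$ (Lemma~\ref{lem:bounded-coboundary-conductor}), unless $\dim\rho(I)=0$.
\end{itemize}
Only after this does the residue-field obstruction apply.

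Your Faltings-extension plan does not replace this step. On any affinoid perfectoid cover the Faltings extension splits automatically, since $H^1$ of $\Oh$ vanishes there. In the affinoid perfectoid torsor of Theorem~\ref{thm:counterexample} it therefore splits on $\widetilde X$, and any differential consequence of perfectoidness you might extract also holds there. Yet the Sen map misses $dt$ at $t=0$. So the chain ``perfectoid $\Rightarrow$ $\Omega$ of the tower killed $\Rightarrow$ $dt$ in the image of $\theta_{\widetilde X}$'' is false as stated.

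Made precise, comparing the split extension with the Sen map is essentially the mechanism of Theorem~\ref{thm:decompletion-surjectivity}. Its unipotent test bundle is, up to twist, the Faltings extension on a toric chart. That mechanism only shows that vanishing of the Sen map obstructs locally analytic decompletion, and perfectoidness alone does not provide decompletion. To reach a contradiction at $\eta$ you need an input specific to type~2 points, namely the ramification control above, and the proposal does not supply it.
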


A failure at a type~2 point would give a representation with
injective differential and a period matrix over the perfectoid
completion of the corresponding branch extension. We transfer
Wewers's ramification estimates to type~2 fields and completed
unramified Lie extensions, and adapt Sen's ramification argument
to show that the inertia image of such a representation is finite.
This contradicts perfectoidness through a residue-field obstruction.
The analytic rank locus then yields the asserted exceptional subset.

\subsection{The Hodge--Tate stack and locally analytic decompletion}
\label{sec:intro-decompletion}
For a perfectoid profinite \'etale $G$-torsor $\widetilde X\to X$,
Rodr\'\i guez Camargo's announced comparison gives a canonical map
\[
 \alpha_\pi:X^{\mathrm{HT}}\longrightarrow
 [\widetilde X^{\mathrm{an}}/G^{\mathrm{an}}]
\]
which is an isomorphism precisely when $\theta_{\widetilde X}$ is
surjective \cite[approximately 1:15:00]{RodriguezCamargoGeometricSenLecture}.
Here $X^{\mathrm{HT}}$ is the analytic Hodge--Tate stack, and
$\widetilde X^{\mathrm{an}}$ is the locally analytic model of the
tower. Thus the geometric Sen condition is equivalent to
\begin{equation}\label{eq:intro-ht-quotient}
 X^{\mathrm{HT}}\simeq
 [\widetilde X^{\mathrm{an}}/G^{\mathrm{an}}].
\end{equation}
The comparison between vector bundles on $X^{\mathrm{HT}}$ and
finite locally free $\Oh_X$-modules, together with the affinoid
realization of \eqref{eq:intro-ht-quotient}, then implies locally
analytic decompletion; the comparison inputs and their sources are
recalled in Subsection~\ref{sec:forward}.
Explicitly, if $X$ is small affinoid, $\widetilde X$ is affinoid
perfectoid, and $B=\Oh_X(\widetilde X)$, surjectivity implies that
every finite projective continuous semilinear $B$-representation $W$
of $G$ satisfies
\begin{equation}\label{eq:intro-geometric-decompletion}
 W^{\la}\text{ is finite projective over }B^{\la},
 \qquad B\otimes_{B^{\la}}W^{\la}\xrightarrow{\ \sim\ }W.
\end{equation}

The implication from Sen surjectivity to decompletion is therefore
provided by Rodr\'\i guez Camargo's comparison. Our first result
is the converse implication.

\begin{introtheorem}[ =  Theorem~\ref{thm:decompletion-surjectivity}]
\label{thm:intro-decompletion}
Let $X/\Cp$ be a small smooth affinoid, let $G$ be a compact
$p$-adic Lie group, and let $\widetilde X\to X$ be a diamondian
affinoid perfectoid profinite \'etale $G$-torsor in the sense of
Definition~\ref{def:four-perfectoid-notions}\textup{(2)}. Put
$B=\Oh_X(\widetilde X)$. If locally analytic decompletion
\eqref{eq:intro-geometric-decompletion} holds for every finite
projective continuous semilinear $B$-representation $W$ of $G$,
then $\theta_{\widetilde X}$ is surjective.
\end{introtheorem}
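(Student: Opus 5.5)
The plan is to argue by contraposition. Suppose $\theta_{\widetilde X}$ fails to be surjective; we will produce a finite projective continuous semilinear $B$-representation $W$ of $G$ for which \eqref{eq:intro-geometric-decompletion} fails. The natural test object comes from the cotangent directions. Since $X$ is small, we fix toric coordinates $X\to\mathbb T^n$ and consider the standard Faltings extension
\[
0\to \Oh_X(1)\to E\to \Oh_X\otimes_{\OO_X}\Omega^1_{X/\Cp}\to 0 .
\]
Evaluating it on $\widetilde X$, which is affinoid perfectoid in the diamondian sense, gives a finite projective $B$-module $W=E(\widetilde X)(-1)$ with a continuous semilinear $G$-action; since $B\otimes_{\OO(X)}\Omega^1$ is projective, so is $W$. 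Vanishing of higher pro-\'etale cohomology of $\Oh$ on the affinoid perfectoid cover, in the diamondian form, lets us compute everything on $\widetilde X$.

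First we relate the Sen map to locally analytic vectors. By construction of the geometric Sen operator (Pan, Rodr\'\i guez Camargo), on $B^{\la}$ the derivations coming from $\mathfrak g$ factor as
\[
d\colon B^{\la}\to B^{\la}\otimes\mathfrak g^\vee \xrightarrow{\theta^\vee} B^{\la}\otimes_{\OO_X}\Omega^1(-1)\text{-dual},
\]
that is, $\mathfrak g$ acts on $B^{\la}$ through the image of the Sen operator. Dually, the Sen operator of a decompleted representation $W^{\la}$ lands in $\im(\theta_{\widetilde X})$. We apply this to the Faltings extension. Its Sen operator is the tautological map $\Omega^1(-1)\to \Oh(1)\otimes\Omega^1(-1)$-twisted, namely the identity on cotangent directions, so the geometric Sen operator of $W$ is surjective onto $\Oh\otimes\Omega^1(-1)$.

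If decompletion holds for $W$, then $W^{\la}$ is finite projective over $B^{\la}$ and spans $W$. The action of $\mathfrak g$ on $W^{\la}$ gives a connection-type operator $\mathfrak g\to\End_{B^{\la}}$ modulo derivations of $B^{\la}$. Base changing to $B$ recovers the Sen operator of $W$ as the composite of this $\mathfrak g$-action with $\theta_{\widetilde X}$. Hence the Sen operator of $W$ factors through $\im\theta_{\widetilde X}$. Since it is the identity on $\Omega^1(-1)$, this forces $\theta_{\widetilde X}$ to be surjective, which is the required contradiction. Concretely, we would check this factorization by choosing a $B^{\la}$-basis of $W^{\la}$, computing that the matrix of $\Theta_W$ is $\theta_{\widetilde X}$ applied to a $\mathfrak g$-valued matrix, and comparing cokernels pointwise at $\Spa$ points after base change to the residue fields of $B$.

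The main obstacle is this compatibility step: showing that, for a decompletable $W$, its geometric Sen operator equals the pushforward along $\theta_{\widetilde X}$ of the $\mathfrak g$-action on $W^{\la}$, and not merely something involving the Sen operator intrinsic to $B$. I would first prove it for $W=B$, where it is the statement that $\Oh^{\la}$ is killed by $\ker\theta^\vee$, using Rodr\'\i guez Camargo's description $\theta=$ differential of the $\mathfrak g$-action on $\Oh^{\la}$. I would then extend it to $W$ by Leibniz and projectivity. The diamondian hypothesis, as opposed to full affinoid perfectoidness, should only be needed to get $H^i(\widetilde X,\Oh)=0$ for $i>0$ and $W$ projective.
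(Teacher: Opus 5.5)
Your test object is the right one. On the toric chart, $E(\widetilde X)(-1)$ is exactly the paper's unipotent module $W=\mathcal E(\widetilde X)$, where $\mathcal E=\Oh_X\otimes_{\Qp}\mathbb V$ and $\gamma(e_i)=e_i+\gamma_ie_0$. Your endgame is also the paper's: $W^{\la}$ generates $W$, $q$ is surjective, so $\im\theta_{\widetilde X}$ contains generators.

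\textbf{The gap.} The gap is in the compatibility step you flag yourself, and the route you propose for it would not work. The case $W=B$ concerns $\Gamma$-invariant functions, on which every $\eta_j$ vanishes. So Sen annihilation there only says $\sum_\ell a_{j\ell}\xi_\ell(b)=0$ for $b\in B^{\la}$. (It is $\im\theta_{\widetilde X}^\vee$, not $\ker\theta_{\widetilde X}^\vee$, that kills locally analytic vectors.) Via Leibniz, this shows only that $w\mapsto\sum_\ell\xi_\ell(w)\otimes\theta_{\widetilde X}(\xi_\ell^\vee)$ is $B^{\la}$-linear on $W^{\la}$. It cannot identify this map with $\theta_{\mathcal W}$, which is defined through the $\Gamma$-action on the toric tower. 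Projectivity of $W^{\la}$ over $B^{\la}$ does not help either, since nothing you have supplied relates the $\mathfrak g$-action on $W^{\la}$ to the $\Gamma$-derivatives. A vector-bundle form of Sen annihilation for the product torsor would suffice, but the scalar case over $B$ does not give it.

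\textbf{The missing idea.} Work on $\widetilde X_\infty=\widetilde X\times_XX_\infty$, with $S=\Oh_X(\widetilde X_\infty)$ and the commuting $G\times\Gamma$-actions, not just on $\widetilde X$. Then apply Sen annihilation for the $(G\times\Gamma)$-torsor to a function that is \emph{not} $\Gamma$-invariant.
\begin{enumerate}
\item Using $H^1_{\mathrm{cont}}(\Gamma,S)=0$, one gets a $\Gamma$-invariant basis and hence $S\otimes_BW\simeq S\otimes_{\Qp}V$. This vanishing is where the diamondian hypothesis really enters.
\item Write $w\in W^{\la}$ as $b_0e_0+\sum_ib_ie_i$. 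Then $q(w)=(b_1,\dots,b_d)$ with $b_i\in B$, while $b_0\in S^{\jla}$ and $\eta_j(b_0)=-b_j$.
\item The identity $\eta_j(b_0)+\sum_\ell a_{j\ell}\xi_\ell(b_0)=0$ then gives directly $\sum_jb_j\omega_j=\theta_{\widetilde X,S}\bigl(\sum_\ell\xi_\ell(b_0)\xi_\ell^\vee\bigr)$.
\end{enumerate}
Applying Leibniz in the trivialization $S\otimes_{\Qp}V$, this proves the compatibility you wanted for this particular $W$. It also makes it unnecessary: you never have to compute $\theta_{\mathcal W}$ or compare cokernels pointwise.

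Note that $\xi_\ell(b_0)$ lies in $S$ rather than $B$. So surjectivity is obtained for $\theta_{\widetilde X,S}$ over $\widetilde X_\infty$, and then follows for $\theta_{\widetilde X}$ by pro-\'etale locality.
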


For affinoid perfectoid torsors, Theorem~\ref{thm:intro-decompletion}
and the forward comparison give the equivalence recorded in
Corollary~\ref{thm:equivalence}.
Our proof uses a unipotent local system obtained from a toric chart.
The Sen annihilation identity places the coefficients of its locally
analytic sections in the image of the torsor's Sen map. If these
sections generate the test module, that image contains every
cotangent direction.

\begin{remark}
The announced comparison results discussed above provide
the implication from Sen surjectivity to locally analytic
decompletion. The proofs of our main theorems  are independent of these comparisons.
\end{remark}

\medskip
\noindent\textbf{Notation and conventions.}
Unless another ground field is explicitly specified, rigid analytic
spaces are separated adic spaces over $\Cp$, locally of the form
$\Spa(A,A^\circ)$ with $A$ a rigid affinoid algebra. We write
$\mathcal M(A)$ for the Berkovich spectrum, $A^\circ$ for the
power-bounded elements, and
\[
 \rho_A(a)=\lim_{n\to\infty}\lVert a^n\rVert^{1/n}
\]
for the spectral seminorm. We write $R^{\mathrm{sn}}$ for the
seminormalization of a ring $R$ \cite[\S~1.4]{KedlayaLiu2019}.
Points mean adic points unless otherwise stated. Classical points
are also called rigid points; their set in $T$ is $T_{\mathrm{rig}}$.
For a rank-one point $x$, $\mathcal H(x)$ is its completed residue
field. A type~2 field over $\Cp$ means the completed residue field
of a type~2 point on a smooth $\Cp$-curve. For a valued field $F$,
$\kappa(F)$ denotes its residue field.

Locally analytic vectors are taken over $\Qp$. Finite projective
modules over Banach algebras carry their natural Banach topology,
and all semilinear actions are continuous. Tensor products between
finite projective modules are algebraic unless explicitly completed.
Structure and differential sheaves on $X$ also denote their pullbacks
to $X_{\proet}$. For a torsor $\pi$, the notation
$\mathfrak g_\pi$ (respectively $\mathfrak g_\pi^\vee$) denotes its
adjoint (respectively coadjoint) local system.

A smooth affinoid $X=\Spa(A,A^\circ)$ of dimension $d$ is called
\emph{small} if it admits a chart
\[
 X\longrightarrow\TT^d_{\Cp}
 =\Spa\bigl(\Cp\langle T_1^{\pm1},\ldots,T_d^{\pm1}\rangle,
            \Cp^\circ\langle T_1^{\pm1},\ldots,T_d^{\pm1}\rangle\bigr)
\]
which is a composition of rational localizations and finite \'etale
morphisms. We fix a compatible system of $p$-power roots of unity,
identifying the group of a toric tower with $\Gamma=\Zp^d$.
For a ringed site or an analytic stack $Z$, write
$\mathrm D_{\mathrm{perf}}(Z)$ for its category of perfect complexes and
$\Vect(Z)$ for its category of finite locally free modules.

\addtocontents{toc}{\protect\setcounter{tocdepth}{-1}}
\section*{Acknowledgements and statement on the use of AI}

The counterexample in Theorem~\ref{thm:counterexample} was
discovered by Rethlas running GPT-5.6 Sol.
In the proof of Theorem~\ref{thm:intro-curve}, the key step,
Lemma~\ref{lem:unramified-approximation}, was discovered by the
authors. For the subsequent analogue of Sen's filtration theorem,
the authors decomposed the argument into lemmas, and GPT-6 Astra
completed their proofs. For Theorem~\ref{thm:intro-decompletion}, the authors supplied
GPT-6 Astra with an analogous result for geometric valuation rings
from their ongoing joint work. GPT-6 Astra then independently
produced the proof of Theorem~\ref{thm:intro-decompletion}.

The authors would like to thank the Rethlas team, namely Haocheng Ju, Jiedong
Jiang, Shurui Liu, Guoxiong Gao, Yuefeng Wang, Zeming Sun, Leheng Chen, Bin Wu,
Liang Xiao, and Bin Dong, for their contributions to the development of
Rethlas \cite{JuEtAl2026Rethlas}. The first author would also like to thank Ruiqi Chen, Xuanyou Li, Yiyang Yang for helpful discussions.
\addtocontents{toc}{\protect\setcounter{tocdepth}{1}}
\section{Perfectoid torsors and geometric Sen theory}\label{sec:camargo}
In this section, we recall the notions of perfectoidness
for profinite \'etale torsors.
We then review the geometric Sen map and its basic properties
needed in the subsequent sections.
\subsection{Reminders on perfectoid towers}

Let $K/\mathbb Q_p$ be a complete nonarchimedean field.  We write
$\Rig_K$ for the category of separated rigid analytic spaces locally of
finite type over $\Spa(K,K^\circ)$.  Let $(\Lambda,\leq)$ be a small directed set with
a least element $0$. 

\begin{definition}
    A \emph{profinite \'etale tower} over $Y\in\Rig_K$
is a filtered inverse system
\[
  \mathcal Y=(Y_\lambda)_{\lambda\in\Lambda},
  \qquad Y_0=Y,
\]
such that, for $\lambda\leq\mu$, the transition morphism
$Y_\mu\to Y_\lambda$ is finite \'etale and surjective.
\end{definition}

Let $Y=\Spa(A_0,A_0^\circ)$ and
$Y_\lambda=\Spa(A_\lambda,A_\lambda^\circ)$ for all $\lambda\in\Lambda$
be a profinite \'etale tower.  For each $\lambda$, let $\rho_\lambda$
denote the spectral seminorm of $A_\lambda$.  The maximum-modulus formula
\cite[Theorems~1.2.1 and~1.3.1]{Berkovich1990} gives
\[
  \rho_\lambda(a)
    =\max_{x\in\mathcal M(A_\lambda)}|a(x)|
    \qquad(a\in A_\lambda).
\]
If $\lambda\leq\mu$,
the finite surjective map induces a surjection
$\mathcal M(A_\mu)\to\mathcal M(A_\lambda)$.  Hence
\[
  \rho_\lambda(a)
    =\max_{x\in\mathcal M(A_\lambda)}|a(x)|
    =\max_{x\in\mathcal M(A_\mu)}|a(x)|
    =\rho_\mu(a)
    \qquad(a\in A_\lambda).
\]
The spectral seminorms therefore induce a single seminorm $\rho$ on the
filtered colimit, and we put
\begin{equation}\label{eq:uniform-completion}
  A_{\mathrm{alg}}=\varinjlim_{\lambda\in\Lambda}A_\lambda,
  \qquad
  A_\infty^{\mathrm u}
    =\widehat{A_{\mathrm{alg}}}^{\,\rho},
\end{equation}
where the latter is the separated completion.
For the integral presentation, set
\begin{equation}\label{eq:integral-completion}
  A_{\mathrm{alg}}^+
    =\varinjlim_{\lambda\in\Lambda}A_\lambda^\circ,
  \qquad
  A_\infty^+
    =\varprojlim_{m\geq1}A_{\mathrm{alg}}^+/p^mA_{\mathrm{alg}}^+,
  \qquad
  A_\infty=A_\infty^+[1/p].
\end{equation}

\begin{definition}\label{def:four-perfectoid-notions}
Let $\mathcal Y=(Y_\lambda)_{\lambda\in\Lambda}$ be a profinite
\'etale tower, and put
\[
  \mathcal Y^\diamond
    =\varprojlim_{\lambda\in\Lambda}Y_\lambda^\diamond.
\]
\begin{enumerate}
\item Suppose that $Y$ is affinoid.  Write
  $Y_\lambda=\Spa(A_\lambda,A_\lambda^\circ)$.  The tower $\mathcal Y$
  is \emph{affinoid perfectoid} if the pair
  $(A_\infty,A_\infty^+)$ in \eqref{eq:integral-completion} is a
  perfectoid affinoid $K$-algebra in the sense of
  \cite[Definition~6.1.1 and Remark~7.1.3]{ScholzeWeinstein2020}.

\item Suppose that $Y$ is affinoid.  The tower $\mathcal Y$ is
  \emph{diamondian affinoid perfectoid} if $\mathcal Y^\diamond$ is
  represented by an affinoid perfectoid space in the sense of
  \cite[Definition~7.1.2 and Remark~7.1.3]{ScholzeWeinstein2020}.

\item The tower $\mathcal Y$ is \emph{perfectoid} if there are an index
  $\lambda\in\Lambda$ and an
  affinoid open covering
  \[
    Y_\lambda=\bigcup_{i\in I}V_i
  \]
  such that, for every $i\in I$, the tail tower
  \[
    \bigl(V_i\times_{Y_\lambda}Y_\mu\bigr)_{\mu\geq\lambda}
  \]
  is affinoid perfectoid.

\item The tower $\mathcal Y$ is \emph{diamondian perfectoid} if
  $\mathcal Y^\diamond$ is represented by a perfectoid space.
\end{enumerate}
\end{definition}

An affinoid perfectoid tower is perfectoid and diamondian affinoid
perfectoid; a perfectoid tower is diamondian perfectoid, and a diamondian
affinoid perfectoid tower is diamondian perfectoid.  These implications
follow from the affinoid construction following
\cite[Definition~4.3]{Scholze2013} and gluing.  Their converses do not
hold in general.

Finite pullback and an example separating the two affinoid notions
are discussed in Proposition~\ref{prop:finite-diamondian-pullback}
and Remark~\ref{rem:bchh-example}.
The following lemmas compare the finite-level completions used in
these definitions.

\begin{lemma}[The separated spectral completion ignores nilpotents]
\label{lem:reduction-completion}
Put $N_\lambda=\sqrt{(0)}\subset A_\lambda$ and
$A_{\lambda,\mathrm{red}}=A_\lambda/N_\lambda$.  Replacing every
$A_\lambda$ by $A_{\lambda,\mathrm{red}}$ does not change the separated
spectral completion:
\begin{equation}\label{eq:reduction-completion}
  \widehat{\varinjlim_{\lambda\in\Lambda}A_\lambda}^{\,\rho}
    \xrightarrow{\ \sim\ }
      \widehat{\varinjlim_{\lambda\in\Lambda}
        A_{\lambda,\mathrm{red}}}^{\,\rho}.
\end{equation}
The reduced transition maps remain finite \'etale and surjective.
\end{lemma}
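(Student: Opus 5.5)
The plan is to show two things. First, the quotient map $A_{\mathrm{alg}}=\varinjlim A_\lambda\to \varinjlim A_{\lambda,\mathrm{red}}$ is a surjective isometry for the spectral seminorms, and its kernel consists of elements of seminorm zero. Second, the reduced tower is again a profinite \'etale tower. Granting both, the separated completions coincide, because the separated completion of a seminormed ring depends only on the quotient by the elements of seminorm zero, equipped with the induced norm.

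For the first point, observe that nilpotents of $A_\lambda$ vanish at every point of $\mathcal M(A_\lambda)$, and that $\mathcal M(A_\lambda)=\mathcal M(A_{\lambda,\mathrm{red}})$, since multiplicative seminorms kill nilpotents. The maximum-modulus formula recalled above therefore gives $\rho_{A_\lambda}(a)=\rho_{A_{\lambda,\mathrm{red}}}(\bar a)$ for every $a\in A_\lambda$. The quotient map is then isometric at each finite level. It is surjective, and its kernel $N_\lambda$ consists of seminorm-zero elements. Passing to the colimit, the kernel of $\varinjlim A_\lambda\to\varinjlim A_{\lambda,\mathrm{red}}$ is $\varinjlim N_\lambda$, since filtered colimits are exact. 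This kernel is contained in $\{\rho=0\}$, and the map is a surjective isometry. Hence the induced map of separated completions is an isometric isomorphism: both are the completion of $A_{\mathrm{alg}}/\{\rho=0\}$.

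For the second point, I use that $A_\mu$ is finite \'etale over $A_\lambda$, so $A_\mu\otimes_{A_\lambda}A_{\lambda,\mathrm{red}}$ is finite \'etale over a reduced ring. It is therefore reduced: \'etale over reduced is reduced, and in the affinoid setting the excellent (noetherian) hypotheses make this standard. The nilradical of $A_\mu$ is then exactly $N_\lambda A_\mu$. Indeed, $N_\lambda A_\mu$ is a nilpotent ideal because $A_\mu$ is noetherian, and the quotient $A_\mu/N_\lambda A_\mu$ is reduced. Consequently $A_{\mu,\mathrm{red}}=A_\mu\otimes_{A_\lambda}A_{\lambda,\mathrm{red}}$ is a base change of a finite \'etale map, hence finite \'etale over $A_{\lambda,\mathrm{red}}$. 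Surjectivity on spectra is unchanged by passing to reductions, since the underlying topological spaces are the same. In particular, the isometry statement in the first point is compatible with the transition maps.

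I expect the only point needing care to be the identification $N_\mu=N_\lambda A_\mu$, which relies on "\'etale over reduced is reduced". For rigid affinoid algebras this follows because étale morphisms preserve the property $R_0+S_1$ of reducedness for noetherian rings. Everything else is formal.
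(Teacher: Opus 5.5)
Your proof is correct and follows essentially the same route as the paper. Both arguments identify each $\rho_\lambda$ with the pullback of the spectral seminorm of $A_{\lambda,\mathrm{red}}$, because every point kills $N_\lambda$; both then pass to the filtered colimit and separated completion, and use that finite \'etale base change commutes with reduction ($N_\mu=N_\lambda A_\mu$). The one difference is minor: the paper additionally records $\ker\rho_\lambda=N_\lambda$, whereas you rightly observe that $N_\lambda\subseteq\ker\rho_\lambda$ already suffices.
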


\begin{proof}
For $\lambda\leq\mu$, finite \'etale base change commutes with reduction,
so
\[
  N_\mu=N_\lambda A_\mu,
  \qquad
  A_{\mu,\mathrm{red}}
   =A_\mu\otimes_{A_\lambda}A_{\lambda,\mathrm{red}}.
\]
Every point of $\Spa(A_\lambda,A_\lambda^\circ)$ annihilates
$N_\lambda$.  Consequently the spectral seminorm on $A_\lambda$ is the
pullback of the spectral seminorm on $A_{\lambda,\mathrm{red}}$.  Since
$A_{\lambda,\mathrm{red}}$ is a reduced affinoid
$K$-algebra, its spectral seminorm is a norm.  Hence
$\ker(\rho_\lambda)=N_\lambda$, and the quotient map identifies
$A_\lambda/\ker(\rho_\lambda)$ isometrically with
$A_{\lambda,\mathrm{red}}$.  Passing to the filtered colimit and then
taking the separated completion proves
\eqref{eq:reduction-completion}.
\end{proof}

\begin{lemma}[Comparison of the two completions]
\label{lem:integral-uniform-comparison}
For the maximal-plus presentation
$A_\lambda^+=A_\lambda^\circ$ fixed above, there are canonical
isomorphisms
\begin{equation}\label{eq:integral-uniform-comparison}
  A_\infty^+\xrightarrow{\ \sim\ }(A_\infty^{\mathrm u})^\circ,
  \qquad
  A_\infty\xrightarrow{\ \sim\ }A_\infty^{\mathrm u}.
\end{equation}
Moreover,
\[
  \ker(A_{\mathrm{alg}}^+\longrightarrow A_\infty^+)=\ker(\rho)
  =\ker(A_{\mathrm{alg}}\longrightarrow A_\infty^{\mathrm u}).
\]
\end{lemma}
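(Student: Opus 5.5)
The comparison rests on one fact: on each $A_\lambda$, the power-bounded subring $A_\lambda^\circ$ is exactly the closed unit ball of the spectral seminorm, $A_\lambda^\circ=\{a:\rho_\lambda(a)\le1\}$. This holds for any affinoid algebra over $K$: an element is power-bounded iff it has spectral seminorm at most $1$, by the maximum-modulus formula recalled above. Since the $\rho_\lambda$ are compatible, taking the colimit gives
\[
 A_{\mathrm{alg}}^+=\{a\in A_{\mathrm{alg}}:\rho(a)\le1\}.
\]
For $a$ in this ball we have $\rho(a)\le|p|^m$ iff $p^{-m}a\in A_{\mathrm{alg}}^+$, because $\rho$ is power-multiplicative and $\rho(p)=|p|$. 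Hence the $p$-adic filtration $p^mA_{\mathrm{alg}}^+$ on $A_{\mathrm{alg}}^+$ coincides with the filtration by the closed $\rho$-balls of radius $|p|^m$. The $p$-adic completion is therefore the completion of the unit ball for the uniform structure defined by $\rho$.

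The first step is to define the map. Let $\overline{B}$ denote the closed unit ball of $A_\infty^{\mathrm u}$. Because the two filtrations agree, the $p$-adic completion $A_\infty^+$ is canonically identified with the separated completion of $(A_{\mathrm{alg}}^+,\rho)$, which is $\overline{B}$. Concretely, the map sends a $p$-adically Cauchy sequence to its $\rho$-limit. Injectivity holds because $\bigcap_m p^mA_\infty^+$ vanishes, and the separated quotient kills exactly the elements with $\rho=0$. Surjectivity onto $\overline{B}$ needs one check: an element of $A_\infty^{\mathrm u}$ with $\rho\le1$ is a limit of elements of $A_{\mathrm{alg}}$ with $\rho\le1$. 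Approximate it by some $a_n\in A_{\mathrm{alg}}$ with $\rho(a_n-x)<1$. Then $\rho(a_n)\le1$ by the ultrametric inequality, so $a_n$ already lies in the ball.

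The next step identifies $\overline{B}$ with $(A_\infty^{\mathrm u})^\circ$. The seminorm $\rho$ extends by continuity to a power-multiplicative norm on the uniform Banach algebra $A_\infty^{\mathrm u}$. In a uniform Banach ring the power-bounded elements are exactly those with spectral norm at most $1$. Inverting $p$ then gives $A_\infty=A_\infty^+[1/p]\cong A_\infty^{\mathrm u}$, since every element of $A_\infty^{\mathrm u}$ becomes of norm $\le1$ after multiplication by a power of $p$.

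For the kernels, an element of $A_{\mathrm{alg}}^+$ dies in $A_\infty^+$ iff it lies in $\bigcap_m p^mA_{\mathrm{alg}}^+$. By the ball description this happens iff $\rho(a)\le|p|^m$ for all $m$, that is, iff $\rho(a)=0$. The kernel of $A_{\mathrm{alg}}\to A_\infty^{\mathrm u}$ is $\ker\rho$ by the definition of the separated completion. Every $a$ with $\rho(a)=0$ already lies in $A_{\mathrm{alg}}^+$, so the two kernels agree. Alternatively, one can reduce to the reduced case via Lemma~\ref{lem:reduction-completion}, where $\ker\rho$ consists of the nilpotents.

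The main point requiring care is the identity $A_\lambda^\circ=\{\rho_\lambda\le1\}$ for nonreduced affinoids. Nilpotents are power-bounded and have $\rho_\lambda=0$, so they pose no problem, and the general statement is standard for affinoid algebras (BGR~6.2.3). Everything else is formal.
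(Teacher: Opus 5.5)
Your proposal is correct and follows essentially the same route as the paper: identify $A_{\mathrm{alg}}^+$ and $p^mA_{\mathrm{alg}}^+$ with closed spectral balls via BGR~6.2.3, conclude that the $p$-adic completion is the $\rho$-completion of the unit ball, then use power-multiplicativity of the completed norm and invert $p$. Your ultrametric approximation showing that the closure of $A_{\mathrm{alg}}^+$ is the whole closed unit ball of $A_\infty^{\mathrm u}$ spells out a step the paper leaves implicit (the rescaling $\rho(p^{-m}a)=|p|^{-m}\rho(a)$ comes from $K$-homogeneity of $\rho$ rather than from power-multiplicativity, but this does not affect the argument).
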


\begin{proof}
Put $C=A_{\mathrm{alg}}$ and $C^+=A_{\mathrm{alg}}^+$.  In an arbitrary,
possibly nonreduced, affinoid $K$-algebra an element is power-bounded
exactly when its spectral seminorm is at most $1$
\cite[\S~6.2.3, Proposition~1]{BoschGuntzerRemmert1984}.  Since every
element of $C$ is represented at a finite level and the transition maps
preserve spectral seminorms, this gives
\begin{equation}\label{eq:unit-ball-colimit}
  C^+=\{a\in C:\rho(a)\leq1\}.
\end{equation}
For every $m\geq0$, division by $p^m$ in a finite-level $K$-algebra and
\eqref{eq:unit-ball-colimit} give
\begin{equation}\label{eq:p-adic-spectral-balls}
  p^mC^+=\{a\in C:\rho(a)\leq|p|^m\}.
\end{equation}
It follows first that
\[
  \bigcap_{m\geq0}p^mC^+=\ker(\rho),
\]
so the separated $p$-adic completion kills the kernel of the spectral
seminorm.  In particular, at a nonreduced finite level every nilpotent in
$C^+$ is infinitely $p$-divisible inside $C^+$ and disappears in this
completion.  It follows next from
\eqref{eq:p-adic-spectral-balls} that the $p$-adic topology on the
separated quotient of $C^+$ is the topology induced by $\rho$, because
the radii $|p|^m$ are cofinal among the positive radii.  Completing
therefore identifies $A_\infty^+$ with the closed unit ball in
$A_\infty^{\mathrm u}$.  The completed spectral norm is
power-multiplicative, so that closed unit ball is
$(A_\infty^{\mathrm u})^\circ$.  Finally, every element of
$A_\infty^{\mathrm u}$ becomes power-bounded after multiplication by a
sufficiently large power of $p$.  Inverting $p$ gives the second
isomorphism in \eqref{eq:integral-uniform-comparison}.
\end{proof}

For a tower satisfying Definition~\ref{def:four-perfectoid-notions}(1),
Lemma~\ref{lem:integral-uniform-comparison} identifies its defining
condition with the condition that $A_\infty^{\mathrm u}$ be a perfectoid
$K$-algebra, or equivalently that
\[
  \bigl(A_\infty^{\mathrm u},(A_\infty^{\mathrm u})^\circ\bigr)
\]
be a perfectoid affinoid $K$-algebra.  In this case
\[
  \widehat{\mathcal Y}
   =\Spa(A_\infty,A_\infty^+)
   =\Spa\bigl(A_\infty^{\mathrm u},
              (A_\infty^{\mathrm u})^\circ\bigr)
\]
is an affinoid perfectoid space.  This is the condition of
\cite[Definition~4.3(i)]{Scholze2013} for the displayed affinoid
presentation and the filtered version of
\cite[Definition~5.1.1]{KedlayaLiu2019}.

\subsection{The geometric Sen operators}\label{sec:sen-operators}

Let $X$ be smooth. We recall the intrinsic geometric Sen operator and
its torsor form from
\cite[Theorems~3.3.2 and~3.3.4]{RodriguezCamargo2026}.

\begin{theorem}\label{thm:geometric-sen}
Every finite locally free $\Oh_X$-module $\mathcal F$ on $X_{\proet}$
has a canonical Higgs field
\[
 \theta_{\mathcal F}:\mathcal F\longrightarrow
 \mathcal F\otimes_{\OO_X}\Omega^1_{X/\Cp}(-1),
 \qquad \theta_{\mathcal F}\wedge\theta_{\mathcal F}=0,
\]
functorial in $\mathcal F$ and compatible with pullback along morphisms
of smooth rigid spaces.

Let $\pi:\widetilde X\to X$ be a profinite \'etale torsor under a
compact $p$-adic Lie group $G$. Put $\mathfrak g=\Lie G$ and let
$\mathfrak g_\pi^\vee$ be the local system associated with the coadjoint
representation. There is a canonical morphism on $X_{\proet}$
\begin{equation}\label{eq:sen-map}
 \theta_{\widetilde X}:
 \Oh_X\otimes_{\Qp}\mathfrak g_\pi^\vee
 \longrightarrow
 \Oh_X\otimes_{\OO_X}\Omega^1_{X/\Cp}(-1).
\end{equation}
For a finite-dimensional locally analytic representation
$\rho:G\to\GL(V)$, put $\mathcal V=\Oh_X\otimes_{\Qp}V_\pi$.
The differentiated action, viewed as
$d\rho:\mathcal V\to\mathcal V\otimes_{\Qp}\mathfrak g_\pi^\vee$,
satisfies
\[
 \theta_{\mathcal V}
   =(\mathrm{id}_{\mathcal V}\otimes\theta_{\widetilde X})\circ d\rho.
\]
Such $\theta_{\widetilde X}$ is called the \emph{Sen map} of $\widetilde X$.
\end{theorem}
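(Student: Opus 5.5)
This theorem recalls results of Rodr\'\i guez Camargo \cite[Theorems~3.3.2 and~3.3.4]{RodriguezCamargo2026}. So the plan is to reconstruct the argument from its known ingredients rather than prove anything new. The natural route is \emph{local-to-global}. Since $\theta_{\mathcal F}$ is to be canonical and functorial, it suffices to construct it on small smooth affinoids $X$ with a toric chart $X\to\TT^d_{\Cp}$. We then check that the result is independent of the chart, and glue using functoriality for \'etale maps.

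\textbf{Step 1: the local operator.} On a small $X$, pull back the toric tower to get an affinoid perfectoid $\Gamma=\Zp^d$-torsor $X_\infty\to X$, and put $B=\Oh_X(X_\infty)$. Evaluating $\mathcal F$ on $X_\infty$ gives a finite projective $B$-module $W$ with continuous semilinear $\Gamma$-action. Using Tate--Sen decompletion for the toric tower (Faltings' almost purity plus the standard Colmez--Sen axioms), we descend $W$ to a finite projective module $W_n$ over the level-$n$ algebra $A_n$, with a $\Gamma$-action that is locally analytic. Differentiating along the basis $\partial_1,\dots,\partial_d$ of $\Lie\Gamma$ gives commuting endomorphisms $\theta_i$. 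We then set
\[
\theta_{\mathcal F}=\sum_i\theta_i\otimes \frac{dT_i}{T_i}(-1),
\]
using the identification $\Lie\Gamma^\vee\otimes\Cp(1)\cong\Omega^1(X)$ that comes from the Faltings extension. Commutativity of the $\theta_i$ gives $\theta\wedge\theta=0$.

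\textbf{Step 2: independence and gluing.} Here $\theta_{\mathcal F}$ must be recast intrinsically. Following Pan and Rodr\'\i guez Camargo, it is the connecting map of the Faltings extension
\[
0\to\Oh_X(1)\to\mathcal E\to\Oh_X\otimes\Omega^1\to0,
\]
applied via the period sheaf $\OO\mathbb C$. Equivalently, it is the residual action on $(\mathcal F\otimes\OO\mathbb C)^{\text{la}}$. This description is manifestly chart-independent, functorial in $\mathcal F$, and compatible with pullback along morphisms of smooth spaces, because the Faltings extension is functorial. I expect this step to be the \emph{main obstacle}: one has to show that the intrinsic definition matches the local one, that is, that the decompletion is compatible with change of chart and with refining the tower level.

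\textbf{Step 3: the torsor form.} For $\pi:\widetilde X\to X$, apply Step 2 to the sheaves $\mathcal V=\Oh_X\otimes V_\pi$, where $V$ ranges over finite-dimensional locally analytic representations. The map $\rho\mapsto\theta_{\mathcal V}$ is a tensor-compatible functor on these. By the functoriality in Step 2 it is compatible with tensor products, duals and morphisms, hence it is a derivation on the Tannakian category generated. Such a tensor-compatible endomorphism of the fiber functor is given by a unique section of $\mathfrak g_\pi\otimes\Omega^1(-1)$ acting through $d\rho$ (Tannakian reconstruction for $\Lie G$, or the local formula $\theta_{\mathcal V}=\sum\theta_i$ with each $\theta_i\in\mathfrak g_\pi\otimes\Oh$). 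Dualising this section gives $\theta_{\widetilde X}$ and yields the stated identity $\theta_{\mathcal V}=(\mathrm{id}\otimes\theta_{\widetilde X})\circ d\rho$. In practice, faithfulness of the adjoint and regular-type representations pins down uniqueness.
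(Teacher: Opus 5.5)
Your Steps~1 and~2 agree with what the paper does. It also only recalls the local construction: descend $\mathcal F(X_\infty)$ to a finite projective $A_n$-module by relative Sen theory, differentiate the $\Gamma$-action, and pair with the $\omega_j$. For descent and chart independence it simply cites \cite[Proposition~3.2.3 and Theorem~3.3.2]{RodriguezCamargo2026}.

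\textbf{The gap is in Step~3.} Tensor-compatible derivations of the fibre functor on finite-dimensional representations of $G$ do not see $\mathfrak g$. They are parametrized by the Lie algebra of the pro-algebraic hull of $G$, that is, by compatible families in $\Lie\bigl(\overline{\rho(G)}^{\mathrm{Zar}}\bigr)$. This can be strictly larger than $d\rho(\mathfrak g)$, because $d\rho(\mathfrak g)$ need not be an algebraic Lie algebra.

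For a concrete case, take $G=\Zp$, $u\in 1+p\Zp$ not a root of unity, $\alpha\in\Zp\setminus\mathbb Q$, and $\rho=\chi_1\oplus\chi_\alpha$ with $\chi_1(\gamma)=u^\gamma$ and $\chi_\alpha(\gamma)=u^{\alpha\gamma}$. The Zariski closure of $\rho(G)$ is all of $\mathbb G_m^2$. So on the tensor category generated by $\rho$, the rule $\chi_1^m\chi_\alpha^n\mapsto ma+nb$ is a tensor derivation for every pair $(a,b)$. It comes from $\mathfrak g$ only when $b=\alpha a$.

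Hence there is no ``Tannakian reconstruction for $\Lie G$''. Your argument only places $\theta_{\mathcal V}$ in $\Oh_X\otimes\Lie\bigl(\overline{\rho(G)}^{\mathrm{Zar}}\bigr)\otimes\Omega^1_{X/\Cp}(-1)$. This is the analogue of the weak, Zariski-closure form of Sen's theorem, not of the statement that the Sen operator lies in $\Cp\otimes\Lie\rho(G_K)$. Your fallback, that each local $\theta_i$ lies in $\mathfrak g_\pi\otimes\Oh_X$, is exactly the claim to be proved. Faithfulness of a representation gives uniqueness of $\theta_{\widetilde X}$, not its existence. Also, the Leibniz rule for $\theta_{\mathcal V\otimes\mathcal V'}$ comes from the local construction, not from functoriality alone.

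\textbf{The missing idea} is how the paper, following \cite[Proposition~3.2.4]{RodriguezCamargo2026}, obtains the torsor map. Apply the construction to an infinite-dimensional representation: the locally analytic functions on $G$ with the translation action.
\begin{enumerate}
\item Multiplication of functions is equivariant, so the resulting Sen operator is a derivation.
\item Translations from the other side are equivariant automorphisms, so by functoriality the operator commutes with them.
\item A derivation of locally analytic functions commuting with those translations is an invariant vector field. So it is an element of $\mathfrak g$ with coefficients in $\Oh_X\otimes_{\OO_X}\Omega^1_{X/\Cp}(-1)$, and dualizing gives $\theta_{\widetilde X}$.
\item The orbit maps $v\mapsto(g\mapsto\rho(g)v)$ embed $V$ equivariantly into $C^{\mathrm{la}}(G)\otimes V$, with $G$ acting only on the first factor. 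Functoriality then yields $\theta_{\mathcal V}=(\mathrm{id}\otimes\theta_{\widetilde X})\circ d\rho$ for every $\rho$.
\end{enumerate}
This requires the decompletion of your Step~1 for Banach/LB locally analytic representations, not just for finite projective modules. Rodr\'\i guez Camargo's relative locally analytic Sen theory provides exactly this.
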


More explicitly, let $f:Y\to X$ be a morphism of smooth rigid spaces,
let $\varphi:H\to G$ be a homomorphism of compact $p$-adic Lie groups,
and let $\widetilde Y\to\widetilde X$ be a $\varphi$-equivariant
morphism over $f$, where $\pi_Y:\widetilde Y\to Y$ is an $H$-torsor.
Writing $\mathfrak h=\Lie H$, functoriality is the commutative diagram
\begin{equation}\label{eq:sen-functoriality}
\begin{CD}
 \Oh_Y\otimes_{\Qp}f^*\mathfrak g_\pi^\vee
 @>{f^*\theta_{\widetilde X}}>>
 \Oh_Y\otimes_{\OO_Y}f^*\Omega^1_{X/\Cp}(-1)\\
 @V{\mathrm{id}\otimes d\varphi^\vee}VV
 @VV{\mathrm{id}\otimes df}V\\
 \Oh_Y\otimes_{\Qp}\mathfrak h^\vee_{\pi_Y}
 @>{\theta_{\widetilde Y}}>>
 \Oh_Y\otimes_{\OO_Y}\Omega^1_{Y/\Cp}(-1).
\end{CD}
\end{equation}
This is \cite[Equation~(3.12)]{RodriguezCamargo2026}.

For the local construction, let $X=\Spa(A,A^\circ)$ be a small
smooth affinoid of dimension $d$ and fix a toric chart. Set
\[
\begin{split}
 X_n={}&X\times_{\TT^d_{\Cp}}
 \Spa\bigl(\Cp\langle T_1^{\pm1/p^n},\ldots,T_d^{\pm1/p^n}\rangle,
 \Cp^\circ\langle T_1^{\pm1/p^n},\ldots,T_d^{\pm1/p^n}\rangle\bigr),\\
 A_n={}&\OO(X_n),\qquad
 A_{\mathrm{tor}}=\widehat{\varinjlim_n A_n}.
\end{split}
\]
The map to the torus sends $T_i$ to $(T_i^{1/p^n})^{p^n}$.
The toric tower defines the affinoid perfectoid space
$X_\infty=\Spa(A_{\mathrm{tor}},A_{\mathrm{tor}}^\circ)$
\cite[Section~4]{Scholze2013}. Its Galois group is $\Gamma=\Zp^d$.
Let $\eta_1,\ldots,\eta_d$ be the standard basis of $\Lie\Gamma$.
The toric Sen isomorphism identifies its dual basis with
\[
 \omega_j=\theta_{X_\infty}(\eta_j^\vee)
 \quad\text{in}\quad
 A_{\mathrm{tor}}\otimes_A\Omega^1_{A/\Cp}(-1).
\]

Here is the finite-level construction of these operators.
For a finite locally free $\Oh_X$-module $\mathcal F$, put
$M=\mathcal F(X_\infty)$. For this local construction, make the finite
\'etale refinement used in relative Sen theory and retain the notation
for the resulting chart and tower. There is a finite projective $A_n$-module
$P_n\subset M$ such that
\[
 A_{\mathrm{tor}}\otimes_{A_n}P_n\simeq M,\qquad
 M^{\Gamma\text{-}\mathrm{la}}
   =\varinjlim_{m\ge n}(A_m\otimes_{A_n}P_n);
\]
see \cite[Proposition~2.2.14, Theorem~2.4.4, and
Remark~3.2.2]{RodriguezCamargo2026}.
Differentiating the action of a sufficiently small open subgroup of
$\Gamma$ on $P_n$ gives
\[
 \nabla_j=d\eta_j\in\End_{A_n}(P_n),\qquad
 [\nabla_i,\nabla_j]=0.
\]
The operators are $A_n$-linear because the action on $A_n$ factors
through a finite quotient. Extending them $A_{\mathrm{tor}}$-linearly
defines
\[
 \theta_{\mathcal F}(a\otimes v)
  =\sum_j(a\otimes\nabla_jv)\otimes\omega_j.
\]
These local Higgs fields descend and are independent of the chart
\cite[Proposition~3.2.3 and Theorem~3.3.2]{RodriguezCamargo2026}.
Applying the construction to locally analytic functions on $G$, the
resulting derivations commute with left translation and hence factor
through $\Lie G$. Dualizing gives the local torsor map
\eqref{eq:sen-map}
\cite[Proposition~3.2.4]{RodriguezCamargo2026}.

\section{A perfectoid torsor with nonsurjective Sen map}
\label{sec:construction}

In this section, we will prove Theorem \ref{thm:counterexample}. 
\subsection{Pullback along universally injective morphisms} Let $K/\Qp$ be a complete nonarchimedean field. 
The most important observation during our construction is that universal pull-back protects perfectoidness.
\begin{definition}\label{def:universally-injective}
Let $f\colon X\to Y$ be a morphism in $\Rig_K$.
\begin{enumerate}
\item The morphism $f$ is a \emph{universal homeomorphism} if, for every
  morphism $T\to Y$ in $\Rig_K$, the map
  \[
    \lvert X\times_YT\rvert\longrightarrow\lvert T\rvert
  \]
  of underlying topological spaces is a homeomorphism.

\item The morphism $f$ is \emph{universally injective} if, for every
  morphism $T\to Y$ in $\Rig_K$, the same map is injective.
\end{enumerate}
\end{definition}

\begin{proposition}
\label{prop:ui-rigid-criterion}
Let $f\colon X\to Y$ be a morphism in $\Rig_K$.  Then $f$ is
universally injective if and only if, for every morphism $T\to Y$ in
$\Rig_K$, the map
\[
  (X\times_YT)_{\mathrm{rig}}\longrightarrow T_{\mathrm{rig}}
\]
is injective.
\end{proposition}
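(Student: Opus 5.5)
The forward implication is immediate, since rigid points are points of the adic spaces and the map $\lvert X\times_YT\rvert\to\lvert T\rvert$ restricts to $(X\times_YT)_{\mathrm{rig}}\to T_{\mathrm{rig}}$. Here we use that a morphism of rigid spaces sends classical points to classical points; this holds because both residue fields are finite over $K$.

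For the converse, fix $T\to Y$ and two points $x_1\neq x_2$ of $Z=X\times_YT$ with the same image $t\in\lvert T\rvert$. We will produce a new test object $T'\to Y$ whose rigid points detect the failure of injectivity. The natural choice is $T'=Z$ itself, mapping to $Y$ through $T$. Then
\[
Z\times_TZ=X\times_Y Z,
\]
and the map $X\times_YZ\to Z$ is the first projection $\mathrm{pr}_1\colon Z\times_TZ\to Z$. By hypothesis, its restriction to rigid points is injective.

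The diagonal $\Delta\colon Z\to Z\times_TZ$ is a section of $\mathrm{pr}_1$. Injectivity of $\mathrm{pr}_1$ on rigid points therefore forces every rigid point of $Z\times_TZ$ to lie on the diagonal. Since $f$ is separated, $\Delta$ is a closed immersion. The key claim is that a Zariski-closed subset of a rigid space containing all rigid points is the whole space, up to nilpotents. In an affinoid $\Spa(B,B^\circ)$, an ideal vanishing at every maximal ideal lies in the Jacobson radical, which is the nilradical because affinoid algebras are Jacobson. Working locally, we conclude that $\lvert\Delta\rvert$ is surjective, so $\lvert Z\times_TZ\rvert=\lvert Z\rvert$ via the diagonal.

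It remains to show that $x_1$ and $x_2$ yield a point of $Z\times_TZ$ off the diagonal. This is the step I expect to be the main obstacle. In schemes, points with common image lift to the fibre product by surjectivity of $\lvert Z\times_TZ\rvert\to\lvert Z\rvert\times_{\lvert T\rvert}\lvert Z\rvert$; for adic spaces this surjectivity also holds for fibre products of analytic adic spaces. I would argue it by choosing a common valued-field extension of the residue fields $k(x_1)$ and $k(x_2)$ over $k(t)$, namely a compositum with a compatible valuation ring, and taking the resulting point $\Spa(L,L^+)\to Z\times_TZ$. Its two projections are $x_1$ and $x_2$, which are distinct, so it cannot lie in the image of $\Delta$. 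This contradicts the surjectivity of $\lvert\Delta\rvert$ obtained above.

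If the valuation-compatibility in this step proves delicate, the fallback is to reduce to rank-one points first. Generizations of $x_1$ and $x_2$ and the maximal generization of $t$ can be handled by the same surjectivity statement applied to completed residue fields, as in Huber's treatment of fibre products.
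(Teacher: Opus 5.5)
Your proposal is correct and follows essentially the same route as the paper. Both arguments base change along $Z\to Y$, identify $X\times_YZ\simeq Z\times_TZ$, and use the section argument to force every rigid point onto the closed diagonal. Both then show that a closed subset containing all rigid points is everything: you use the Jacobson property of affinoid algebras, the paper uses the Nullstellensatz on an affinoid open in the complement. Both finish with surjectivity of $\lvert Z\times_TZ\rvert\to\lvert Z\rvert\times_{\lvert T\rvert}\lvert Z\rvert$.

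For that last step the paper simply cites the pointwise description of fibre products in Huber's Proposition~1.2.2. So the valued-field amalgamation you sketch does not need to be redone. It does work, but it requires choosing the prime of $k(x_1)\otimes_{k(t)}k(x_2)$ so that both valuations extend simultaneously.
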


\begin{proof}
Only the converse requires proof.  Fix $T\to Y$, put
$W=X\times_YT$, and let $p_1,p_2$ be the two projections from
$P=W\times_TW$.  Since $f$ is separated, the diagonal
\[
  \Delta\colon W\longrightarrow P
\]
is a closed immersion.  Apply the hypothesis after the base change
$W\to Y$.  Under the canonical identification
\[
  X\times_YW\simeq W\times_TW=P,
\]
the resulting map on rigid points is $p_2\colon P_{\mathrm{rig}}
\to W_{\mathrm{rig}}$, and it is injective.  If $z\in P_{\mathrm{rig}}$,
then both $z$ and $\Delta(p_2(z))$ are rigid points with the same image
under $p_2$.  Hence
\[
  z=\Delta(p_2(z)).
\]
Thus the closed subset $\Delta(W)\subseteq P$ contains every rigid point
of $P$.  It equals $P$: otherwise its open complement would contain a
nonempty affinoid open, and the affinoid Nullstellensatz would give a rigid
point in that complement.

The pointwise description in the construction of fibre products
\cite[Proposition~1.2.2]{HuberEtale} gives a surjection
\[
  |W\times_TW|\longrightarrow |W|\times_{|T|}|W|.
\]
If $w_1,w_2\in W$ have the same image in $T$, choose $z\in P$ mapping to
$(w_1,w_2)$.  Since $\Delta(W)=P$, the point $z$ lies on the diagonal, so
$w_1=w_2$.  Thus $|W|\to|T|$ is injective.  As $T\to Y$ was arbitrary,
$f$ is universally injective.
\end{proof}

\begin{remark}
The analogous rigid-point criterion for universal homeomorphisms is false.
Over $\Cp$, let
\[
  D=\Spa(\Cp\langle T\rangle,\Cp^\circ\langle T\rangle),
  \qquad
  U=\bigcup_{a\in\Cp^\circ/\mathbb{C}_p^{\circ\circ}}\bigcup_{n\geq 1}\{x\in D:|T-a|_x\leq p^{-\frac{1}{n}}\}
\]
The open immersion $U\hookrightarrow D$ induces a bijection on rigid points
after every base change in $\Rig_{\Cp}$, but it omits the Gauss point and is
therefore not a universal homeomorphism.
\end{remark}

\begin{remark}
Let $A\to B$ be a finite morphism of $K$-affinoid algebras.  Then
\[
  \Spa(B,B^\circ)\longrightarrow\Spa(A,A^\circ)
\]
is a universal homeomorphism in $\Rig_K$ if and only if the schematic morphism
\[
  \Spec(B)\longrightarrow\Spec(A)
\]
is a finite universal homeomorphism, by
\cite[Proposition~2.3.2]{GuoHT}.
\end{remark}

\begin{lemma}\label{lem:finite-factor}
Let $g\colon U\to V$ be a finite universally injective morphism between
affinoid objects of $\Rig_K$.  If $Z\hookrightarrow V$ is the
closed analytic image of $g$, then the induced morphism $U\to Z$ is a
finite universal homeomorphism.
\end{lemma}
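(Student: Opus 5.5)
Write $U=\Spa(B,B^\circ)$ and $V=\Spa(A,A^\circ)$. The finite morphism $g$ corresponds to a finite map $\phi\colon A\to B$. Put $I=\ker\phi$, so that $Z=\Spa(A/I,(A/I)^\circ)$ is the closed analytic image and $A/I\hookrightarrow B$ is finite and injective. The plan is to verify the schematic criterion from the Remark preceding the lemma (via \cite[Proposition~2.3.2]{GuoHT}). That is, we show $\Spec B\to\Spec(A/I)$ is a finite universal homeomorphism, i.e.\ integral, surjective and universally injective. Finiteness, hence integrality, is given. The Remark then upgrades this to a universal homeomorphism $U\to Z$ in $\Rig_K$.

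\emph{Surjectivity.} Since $A/I\to B$ is an injective integral ring map, lying over gives surjectivity of $\Spec B\to\Spec(A/I)$.

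\emph{Universal injectivity on spectra.} A morphism of schemes is universally injective if and only if it is injective and, for every point, the residue field extension is purely inseparable. In characteristic $0$ this means trivial. The plan is to reduce both conditions to rigid points.

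First we check rigid points. Every maximal ideal of an affinoid algebra has residue field finite over $K$. A rigid point $z$ of $Z$ corresponds to some $\mathfrak m\subset A/I$, and the rigid points of $U$ over it correspond to the maximal ideals of $B$ over $\mathfrak m$. Base change along the rigid point $\Spa(\kappa(\mathfrak m))\to V$, followed by extension of scalars to a finite Galois closure $L/K$ of the residue fields involved, gives the fibre $B\otimes_A L$, which is a finite $L$-algebra. Universal injectivity of $g$ in $\Rig_K$ says the fibre has at most one point after every such base change. If $\kappa(\mathfrak n)\neq\kappa(\mathfrak m)$ for a maximal ideal $\mathfrak n$ of $B$ over $\mathfrak m$, then $\kappa(\mathfrak n)\otimes_{\kappa(\mathfrak m)}L$ would split into several points. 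So each maximal ideal $\mathfrak m$ has exactly one preimage $\mathfrak n$, with $\kappa(\mathfrak n)=\kappa(\mathfrak m)$.

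Next we pass to all primes. Affinoid algebras are Jacobson, and $B$ is finite over $A/I$, hence Jacobson. A general prime $\mathfrak p$ of $A/I$ can be treated in one of two ways. One option is to apply the maximal-ideal statement to $A/\mathfrak p\to B/\mathfrak pB$, which is again a finite universally injective morphism of affinoids, since closed immersions are stable under base change. The other is to use the lemma that a finite morphism whose fibres over all closed points are single points with trivial residue extension is radicial. For the latter I would argue with the fibre over $\mathfrak p$: it is $\Spec(B\otimes\kappa(\mathfrak p))$, a finite $\kappa(\mathfrak p)$-scheme. If it had two points, or a nontrivial separable residue extension, the standard constructibility of the geometric fibre-cardinality function for finite morphisms, applied over the integral affinoid $A/\mathfrak p$, would give a dense set of closed points with the same defect, contradicting the previous step.

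\emph{Main obstacle.} The delicate step is this passage from rigid points to all scheme points. I would handle it by reducing to $A/\mathfrak p$ integral and using generic flatness of $B/\mathfrak pB$ over a nonempty Zariski open subset of $\Spec(A/\mathfrak p)$. On that open the geometric fibre degree is locally constant, so a defect at the generic point propagates to closed points, which are Zariski dense by the Jacobson property.
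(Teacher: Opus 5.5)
Your argument is correct, but it takes a different route from the paper. The paper never leaves the adic side.
\begin{itemize}
\item \emph{Injectivity.} Since $Z\hookrightarrow V$ is a monomorphism, $U\times_Z T\simeq U\times_V T$ for every $T\to Z$. So universal injectivity of $U\to Z$ is inherited directly from $g$, with no residue-field or generic-point analysis.
\item \emph{Surjectivity.} On an affinoid $T=\Spa(C,C^\circ)$ one has $B\otimes_{A/I}C=B\widehat\otimes_{A/I}C$. Lying over at rigid points then shows the image contains all rigid points; the image of a finite map is closed, and rigid points are dense, so the map is surjective.
\item \emph{Conclusion.} A closed continuous bijection is a homeomorphism.
\end{itemize}
You instead go through the schematic criterion of \cite[Proposition~2.3.2]{GuoHT}. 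This forces you to transfer injectivity from adic spaces to $\Spec B\to\Spec(A/I)$: first at maximal ideals by base change to $\Spa(L)$, then to all primes by spreading out. That is heavier, since it needs constructibility of fibre cardinalities, the Jacobson property, and Guo's comparison. In exchange, it directly yields the schematic universal homeomorphism that the paper only extracts from Guo later, in Lemma~\ref{lem:uh-completion}.

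Two points in your last step should be tightened.
\begin{itemize}
\item Your first option does not treat the prime $\mathfrak p$ at all. Applying the maximal-ideal statement to $A/\mathfrak p\to B/\mathfrak pB$ only gives information at maximal ideals containing $\mathfrak p$, which you already have, so it is not a self-contained alternative.
\item On a flat locus, the number of geometric points of fibres is not locally constant: think of $x^2=t$ over $K\langle t\rangle$, with two geometric points generically and one at $t=0$. It is only lower semicontinuous. That is nevertheless enough: it makes a defect at the generic point of $\Spec(A/\mathfrak p)$ persist on a dense open, hence at some closed point.
\end{itemize}
The cleanest version is the constructibility argument you also sketch. For the finite, finitely presented map $\Spec B\to\Spec(A/I)$, the set of points whose geometric fibre has at least two points is constructible. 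A nonempty constructible subset of a Jacobson scheme contains a closed point, which contradicts your rigid-point computation.
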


\begin{proof}
Take $V=\Spa(A,A^\circ)$ and $U=\Spa(B,B^\circ)$, and put
\[
  I=\ker(A\longrightarrow B),
  \qquad
  Z=\Spa(A/I,(A/I)^\circ).
\]
The ideal $I$ is closed because $A$ is noetherian.  The homomorphism
$A/I\hookrightarrow B$ is finite, and
$\Spec(B)\to\Spec(A/I)$ is integral and surjective.  Since $Z\to V$ is a
monomorphism, for every $T\to Z$ there is a canonical identification
\[
  U\times_ZT\simeq U\times_VT.
\]
Thus $U\to Z$ is universally injective.  It is also universally
surjective.  Indeed, let $T\to Z$ and $t\in T_{\mathrm{rig}}$.  On an
affinoid neighbourhood $T=\Spa(C,C^\circ)$ of $t$, finiteness gives
\[
  B\otimes_{A/I}C\xrightarrow{\ \sim\ }
  B\widehat\otimes_{A/I}C
\]
and lying over gives a maximal ideal above the maximal ideal defined by $t$;
its residue field is finite over $\kappa(t)$, so it defines a rigid point
of $U\times_ZT$ above $t$.  Thus the image of $U\times_ZT\to T$ contains
all rigid points.  This image is closed because the map is finite, and
rigid points are dense, so the map is surjective.  It is also injective by
the preceding paragraph, hence a closed continuous bijection and therefore
a homeomorphism.
\end{proof}

\begin{proposition}[Local factorization of universally injective morphisms]
\label{prop:ui-factorization}
Let $K/\mathbb Q_p$ be a complete nonarchimedean field and let $f\colon X\to Y$ be a
universally injective morphism in $\Rig_K$.  There exist an admissible open
covering $X=\bigcup_iU_i$, admissible open subspaces $V_i\subseteq Y$, and
factorizations
\[
  U_i\xrightarrow{q_i}Z_i\xrightarrow{j_i}V_i
\]
of $f|_{U_i}$ such that $q_i$ is a finite universal homeomorphism and
$j_i$ is a closed immersion.
\end{proposition}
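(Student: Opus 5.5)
Since $f$ is universally injective, every fibre of $f$ has at most one point, so $f$ is quasi-finite with finite (indeed singleton) fibres. The plan is to show first that $f$ is locally finite, and then to apply Lemma~\ref{lem:finite-factor} on affinoid pieces.

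\textbf{Step 1: local finiteness.} I would use the local structure theorem for quasi-finite morphisms of rigid spaces, which follows from Huber's results on morphisms locally of finite type with discrete fibres (see \cite{HuberEtale}, \S1.5). For every point $x\in X$ it gives an affinoid open neighbourhood $U$ of $x$ and an affinoid open $V\subseteq Y$ with $f(U)\subseteq V$, such that $f|_U\colon U\to V$ is finite. Universal injectivity is stable under restriction to opens, because open immersions are universally injective and the property is stable under composition and base change. Hence each $f|_U\colon U\to V$ is finite and universally injective. We may also take $U$ to range over an admissible covering of $X$: the points of $X$ are covered by such $U$, and a cover of a quasi-separated rigid space by opens that refines to affinoids is admissible once we pass to a refinement on quasi-compact pieces.

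\textbf{Step 2: apply the lemma.} On each such pair $(U_i,V_i)$, Lemma~\ref{lem:finite-factor} gives the closed analytic image $Z_i\hookrightarrow V_i$, defined by $\ker(\OO(V_i)\to\OO(U_i))$. It also gives that $U_i\to Z_i$ is a finite universal homeomorphism. This is exactly the required factorization $U_i\xrightarrow{q_i}Z_i\xrightarrow{j_i}V_i$.

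\textbf{Main obstacle.} The hard part is Step~1, the localization to a finite morphism. Huber's theorem on quasi-finite morphisms is stated for points of the adic space, and produces a neighbourhood in which $x$ is isolated in its fibre and the map becomes finite. I would cite it as follows: a morphism locally of finite type whose fibres are discrete is locally on the source an open immersion followed by a finite morphism, when restricted to suitable opens of the target.

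If that citation is unavailable in the needed form, I would argue at rigid points instead. For a classical point $x$, the local ring map $\OO_{Y,f(x)}\to\OO_{X,x}$ is quasi-finite, since the fibre ring is Artinian by singleton fibres. The rigid-analytic Weierstrass preparation (the finiteness criterion of Bosch--Güntzer--Remmert) then shows that $f$ is finite over a small affinoid neighbourhood of $f(x)$, after shrinking the source to a connected component of the preimage. Such neighbourhoods of classical points cover $X$ admissibly by the quasi-compactness arguments used in the proof of Proposition~\ref{prop:ui-rigid-criterion}.
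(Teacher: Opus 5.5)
Your Step 2 is the same as the last step of the paper's proof. Step 1, however, has a genuine gap, and it is where all the content of the proposition lies.

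Huber's structure result for locally quasi-finite morphisms only gives, near $x$, a factorization $U\hookrightarrow W\to V$ into an open immersion followed by a finite morphism. It does not let you shrink until $U\to V$ is itself finite. For quasi-finite morphisms of adic spaces that stronger claim is false. Take $g\colon W=\Spa K\langle s\rangle\to V=\Spa K\langle t\rangle$, $t\mapsto s^2$, with $p\neq2$. Let $\xi$ be the Gauss point of $W$, and let $x,x'$ be the two rank-two specializations of $\xi$ (residue directions $\bar s=\pm1$) lying over the rank-two point $y$ of $V$ in the direction $\bar t=1$. The restriction of $g$ to the open $X=W\setminus\{x'\}$ (the point $x'$ is closed) has finite fibres. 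Yet no open $U\ni x$ is finite over an open $V'\ni y$. Indeed, $U$ contains the generization $\xi$. The open immersion $U\hookrightarrow g^{-1}(V')$ would then be finite and so have closed image, and that image would contain the specialization $x'$ of $\xi$.

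So discreteness of fibres is not enough, and your argument uses universal injectivity only to get quasi-finiteness. The fallback at classical points does not help either. Affinoid neighbourhoods of classical points need not cover the non-classical points; compare the paper's remark on an open of the disc that contains every rigid point but not the Gauss point. Nothing in the proof of Proposition~\ref{prop:ui-rigid-criterion} supplies such a covering argument.

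The missing idea is to exploit universal injectivity at an arbitrary adic point $x$, through the maximal rank-one generizations $\xi$ of $x$ and $\eta$ of $y=f(x)$. The paper proceeds as follows.
\begin{enumerate}
\item The diagonal of $f$ is a surjective, hence nilpotent, closed immersion, and stays so after complete field extension. So geometric fibres have at most one point and $\mathcal H(\xi)=\mathcal H(\eta)$. This is precisely what fails above, where $[\mathcal H(\xi):\mathcal H(\eta)]=2$.
\item Topological generators $b_i$ of $B$ over $A$ are approximated at $\xi$ by fractions $a_i$ from $A$. After a rational localization, $a_i\in A^\circ$ and $|c_i(\xi)|<|\varpi|$ for $c_i=b_i-a_i$.
\item Spectrality (constructible closedness of $f(D)$) and injectivity of $f$ over $\eta$ give a rational neighbourhood $V$ of $y$ on whose preimage all $|c_i|<|\varpi|$.
\item Topological Nakayama then shows that $f^{-1}(V)\to V$ is finite.
\end{enumerate}
Because this is done at every adic point, the resulting opens automatically form an admissible covering. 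Lemma~\ref{lem:finite-factor} then finishes the proof exactly as in your Step 2.
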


\begin{proof}
It suffices to show that every adic point $x\in X$ admits
affinoid open neighbourhoods $U$ of $x$ and $V$ of $f(x)$
such that $U\to V$ is finite.
We may initially restrict to affinoid neighbourhoods
\[
    X=\operatorname{Spa}(B,B^\circ),\qquad
    Y=\operatorname{Spa}(A,A^\circ).
\]
Put $y=f(x)$, and let $\xi$ and $\eta$ be the maximal rank-one
generizations of $x$ and $y$, respectively.

The closed diagonal of $f$ is surjective on underlying spaces,
hence is a nilpotent closed immersion. This property persists
under complete field extensions. By the affinoid Nullstellensatz,
the geometric fibres are therefore zero-dimensional with at
most one point. Thus $\mathcal H(\xi)/\mathcal H(\eta)$ is
finite and purely inseparable, and characteristic zero gives
\(
    \mathcal H(\xi)=\mathcal H(\eta).
\)

Choose a surjection
\[
    A\langle T_1,\ldots,T_n\rangle\twoheadrightarrow B,
    \qquad T_i\longmapsto b_i,
\]
with $b_i\in B^\circ$, and fix $\varpi\in K$ with
$0<|\varpi|<1$.
By density of $\operatorname{Frac}(A/\ker\eta)$ in
$\mathcal H(\eta)$, choose fractions $a_i=u_i/v_i$, with
$v_i(\eta)\ne0$, such that
\[
    |b_i-a_i|(\xi)<|\varpi|.
\]
These strict inequalities persist at $x$, so $|a_i(y)|\le1$.
After a rational localization around $y$, imposing
\[
    |u_i|\le|v_i|,\qquad |\varpi|^{m_i}\le|v_i|
\]
for sufficiently large $m_i$, and the corresponding base
change on $X$, we may assume $a_i\in A^\circ$.
The elements $c_i=b_i-a_i$ then satisfy
\[
    A\langle S_1,\ldots,S_n\rangle\twoheadrightarrow B,
    \qquad S_i\longmapsto c_i,
    \qquad |c_i(\xi)|<|\varpi|.
\]

Set
\[
    D=\bigcup_{i=1}^n\{z\in X:|c_i(z)|\ge|\varpi|\}.
\]
This is a finite union of rational subsets. Since $f$ is
spectral, $f(D)$ is closed in the constructible topology
\cite[Tag~0A2S]{Stacks}.
It contains no generization of $y$: otherwise, for some
$z\in D$, the maximal rank-one generization $\zeta$ of $z$
would satisfy
\[
    f(\zeta)=\eta=f(\xi).
\]
Injectivity would give $\zeta=\xi$, contradicting
$|c_i(\xi)|<|\varpi|$ for every $i$.
It follows from \cite[Tag~0903]{Stacks} that
$y\notin\overline{f(D)}$.
Choose a rational neighbourhood $V$ of $y$ disjoint from
$\overline{f(D)}$, and put
\[
    U=X\times_YV,\qquad
    A_V=\mathcal O(V),\qquad B_V=\mathcal O(U).
\]
Then $x\in U$, and $|c_i(z)|<|\varpi|$ for every $z\in U$.
Hence
\[
    c_i/\varpi\in B_V^\circ,\qquad
    c_i\in B_V^{\circ\circ},
\]
by \cite[Proposition~7.52(1)]{Wedhorn}.

Choose a complete ring of definition $A_0\subset A_V$
containing $\varpi$, and let $B_0$ be the image of
$A_0\langle S_1,\ldots,S_n\rangle$ in $B_V$.
The surjection onto $B_V$ is strict, so $B_0$ is a ring of
definition. Each $c_i$ is topologically nilpotent, and hence
its image in $B_0/\varpi B_0$ is nilpotent.
These images generate $B_0/\varpi B_0$ over
$A_0/\varpi A_0$, so this algebra is finite.
Topological Nakayama \cite[Tag~031D]{Stacks} now implies
that $B_0$ is finite over $A_0$.
Inverting $\varpi$ shows that $B_V$ is finite over $A_V$,
as required.

The neighbourhoods $U$ obtained for all $x\in X$ form an
admissible open covering. Applying Lemma~3.5 to each finite,
universally injective morphism $U\to V$ gives the asserted
factorization.
\end{proof}


The next lemma is the algebraic step used after the local factorization.  Its
proof identifies the completion of the finite-level rings.

\begin{lemma}[Uniform completion after pullback along a finite universal
homeomorphism]
\label{lem:uh-completion}
Let 
\[
  h\colon U=\Spa(B,B^\circ)\longrightarrow
  Z=\Spa(R,R^\circ)
\]
be a finite universal homeomorphism of affinoid rigid $K$-spaces.  Let
$\mathcal Z=(Z_\lambda)_{\lambda\in\Lambda}$, with
$Z_\lambda=\Spa(R_\lambda,R_\lambda^\circ)$, be a profinite \'etale
tower over $Z$, and put
\[
  S=\widehat{\varinjlim_{\lambda\in\Lambda}R_\lambda}^{\,\rho},
  \qquad
  B_\lambda=R_\lambda\widehat\otimes_RB
           =R_\lambda\otimes_RB,
  \qquad
  U_\lambda=U\times_ZZ_\lambda
           =\Spa(B_\lambda,B_\lambda^\circ).
\]
Assume that $S$ is a perfectoid $K$-algebra.  Then the rings
$B_{\lambda,\mathrm{red}}$ admit compatible isometric embeddings
into $S$, and the finite-level rings induce an isomorphism
\begin{equation}\label{eq:uh-uniform-completion}
  \widehat{\varinjlim_{\lambda\in\Lambda}B_\lambda}^{\,\rho}
    \xrightarrow{\ \sim\ }S.
\end{equation}
In particular, $(U_\lambda)_{\lambda\in\Lambda}$ is affinoid perfectoid
and the finite-level rings of the two towers have the same completion.
\end{lemma}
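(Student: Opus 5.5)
Write $R_{\mathrm{alg}}=\varinjlim_\lambda R_\lambda$ and $B_{\mathrm{alg}}=\varinjlim_\lambda B_\lambda$. The plan is to build a compatible family of maps $B_\lambda\to S$ that kill nilpotents and preserve spectral seminorms, and then to show that the image of $B_{\mathrm{alg}}$ is dense. Lemma~\ref{lem:reduction-completion} lets us discard nilpotents: we may replace $B_\lambda$ by $B_{\lambda,\mathrm{red}}$ without changing the left side of \eqref{eq:uh-uniform-completion}.

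\textbf{Step 1: spectral norms are unchanged by the homeomorphism.} For each $\lambda$ the base change $h_\lambda\colon U_\lambda\to Z_\lambda$ is again a finite universal homeomorphism. In particular it is a bijection on Berkovich points, and Proposition~\ref{prop:ui-factorization} (or rather its proof) shows that completed residue fields agree, since characteristic zero rules out inseparability. Two consequences follow.
\begin{itemize}
\item For $r\in R_\lambda$ we get $\rho_{B_\lambda}(r\otimes1)=\rho_{R_\lambda}(r)$, because the maximum-modulus formula is computed over the same set of points.
\item For any $b\in B_\lambda$ and any point $x$, the value $b(x)$ lies in $\mathcal H(x)=\mathcal H(h_\lambda(x))$.
\end{itemize}

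\textbf{Step 2: construct the map into $S$.} I will use perfectoidness of $S$ through its henselian and root-closed nature. Since $B$ is finite over $R$, the ring $B_{\mathrm{red}}$ is finite over $R_{\mathrm{red}}$, and every $b\in B$ satisfies a monic polynomial over $R$. The key algebraic claim is that a finite universal homeomorphism in characteristic zero is, after reduction, \emph{subintegral}: $R_{\mathrm{red}}\to B_{\mathrm{red}}$ induces a bijection on spectra and isomorphisms of residue fields. Such an extension sits inside the seminormalization $R_{\mathrm{red}}^{\mathrm{sn}}$. This is where the notation $R^{\mathrm{sn}}$ and \cite{KedlayaLiu2019} enter.

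A perfectoid ring is seminormal; in fact any uniform Banach $\Qp$-algebra is. Therefore, if $b^2=r_2$ and $b^3=r_3$ with $r_2,r_3$ in a seminormal ring, there is a unique element $s$ with $s^2=r_2$ and $s^3=r_3$. Using this repeatedly, the map $R_{\lambda,\mathrm{red}}\to S$ extends uniquely to
\[
B_{\lambda,\mathrm{red}}\subseteq (R_{\lambda,\mathrm{red}})^{\mathrm{sn}}\longrightarrow S.
\]
Uniqueness gives compatibility in $\lambda$. Isometry is then checked pointwise: points of $\mathcal M(S)$ surject onto those of $\mathcal M(B_\lambda)$ through $Z_\lambda$, and the values of $b$ are determined by Step 1. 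This gives $\rho(b)=\rho_S(\text{image of }b)$, hence injectivity on the reduced ring.

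\textbf{Step 3: density and conclusion.} The image of $B_{\mathrm{alg}}$ contains the image of $R_{\mathrm{alg}}$, which is dense in $S$ by definition. The embedding is isometric, so the completion of $B_{\mathrm{alg}}$ maps isomorphically onto $S$. This proves \eqref{eq:uh-uniform-completion}. Affinoid perfectoidness of $(U_\lambda)_\lambda$ then follows from Lemma~\ref{lem:integral-uniform-comparison} and the remark after it.

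\textbf{Main obstacle.} I expect Step 2 to be the hardest part, and specifically the claim that $B_{\mathrm{red}}$ lies in the seminormalization of $R_{\mathrm{red}}$, i.e.\ that a universal homeomorphism of affinoids in characteristic zero is subintegral on reductions. The argument I would try first is Traverso/Swan's characterization: $\Spec$ of a finite universal homeomorphism is bijective with purely inseparable, hence trivial, residue extensions (the Remark citing \cite{GuoHT}). A finite subintegral extension is then a filtered union of elementary ones, each obtained by adjoining $b$ with $b^2,b^3\in R$. This chain is what makes the unique extension into the seminormal ring $S$ work.
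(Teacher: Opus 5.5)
Your proposal follows the paper's proof essentially step for step: reduce to reduced rings, use that the universal homeomorphism gives $R_\lambda^{\mathrm{sn}}=B_\lambda^{\mathrm{sn}}$ together with seminormality of the perfectoid $S$ to obtain compatible maps $\jmath_\lambda\colon B_\lambda\to S$, deduce isometry from surjectivity of $\mathcal M(S)\to\mathcal M(R_\lambda)\simeq\mathcal M(B_\lambda)$, and conclude by density (the paper quotes Kedlaya--Liu's Corollary~1.4.7 for the seminormalization step rather than re-deriving it from Swan's elementary subintegral decomposition). Two small points: you assert without proof both that surjectivity of $\mathcal M(S)\to\mathcal M(R_\lambda)$ and, implicitly, the boundedness of $\jmath_\lambda$ needed for $\mathcal M(S)\to\mathcal M(B_\lambda)$ to be defined, which the paper supplies by a finite-intersection argument over the tower and a finite-module norm estimate; and your parenthetical claim that every uniform Banach $\Qp$-algebra is seminormal is false---the paper's cusp $\Cp\langle t^2,t^3\rangle$ is uniform but not seminormal---though nothing in your argument depends on it.
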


\begin{proof}
Finite \'etale base change commutes with reduction.  Thus
\[
  R_{\lambda,\mathrm{red}}
    =R_\lambda\otimes_RR_{\mathrm{red}},
  \qquad
  B_{\lambda,\mathrm{red}}
    =R_{\lambda,\mathrm{red}}
       \otimes_{R_{\mathrm{red}}}B_{\mathrm{red}}.
\]
The ring on the right is finite \'etale over the reduced ring
$B_{\mathrm{red}}$, hence is reduced; it is therefore the reduction of
$B_\lambda$.  The induced morphism
\[
  \Spa(B_{\mathrm{red}})\longrightarrow\Spa(R_{\mathrm{red}})
\]
is again a finite universal homeomorphism.  By
Lemma~\ref{lem:reduction-completion}, the systems $(R_\lambda)_\lambda$
and $(B_\lambda)_\lambda$ have the same separated spectral completions as
their levelwise reductions.  In particular, the reduced $R$-side
completion is canonically $S$.  We may therefore replace $h$ and the two
systems by their reductions and assume from now on that all displayed
rings are reduced.

For every $\lambda\in\Lambda$, the square
\[
\begin{CD}
  U_\lambda @>>> Z_\lambda \\
  @VVV @VVV \\
  U @>{h}>> Z
\end{CD}
\]
is Cartesian; in particular, $R_\lambda\to B_\lambda$ is obtained from
$R\to B$ by base change.  The morphism $U_\lambda\to Z_\lambda$ is
therefore a finite universal homeomorphism.
By \cite[Proposition~2.3.2]{GuoHT}, the induced morphism
\[
  \Spec(B_\lambda)\longrightarrow\Spec(R_\lambda)
\]
is a universal homeomorphism of schemes.  Since both rings are reduced
and contain $\mathbb Q$, \cite[Corollary~1.4.7]{KedlayaLiu2019} gives
\begin{equation}\label{eq:sn-finite-level}
  R_\lambda^{\mathrm{sn}}\xrightarrow{\ \sim\ }
  B_\lambda^{\mathrm{sn}}.
\end{equation}

The algebra $S$ is perfectoid, hence seminormal by
\cite[Theorem~3.7.4]{KedlayaLiu2019}.  The map $R_\lambda\to S$
consequently factors uniquely through $R_\lambda^{\mathrm{sn}}$.  The
composite
\[
  B_\lambda\longrightarrow B_\lambda^{\mathrm{sn}}
  \xrightarrow[\sim]{\,\eqref{eq:sn-finite-level}^{-1}\,}
  R_\lambda^{\mathrm{sn}}\longrightarrow S
\]
is a homomorphism
\begin{equation}\label{eq:jlambda}
  \jmath_\lambda\colon B_\lambda\longrightarrow S
\end{equation}
whose restriction to $R_\lambda$ is the canonical map.  If
$\lambda\leq\mu$, functoriality and uniqueness of the factorization
through seminormalization give
\begin{equation}\label{eq:jlambda-compatible}
  \jmath_\mu\circ(B_\lambda\longrightarrow B_\mu)=\jmath_\lambda.
\end{equation}
The map \eqref{eq:jlambda} is bounded.  Indeed, choose generators
$e_1,\ldots,e_r$ of the finite $R_\lambda$-module $B_\lambda$.  For
$b=\sum_i a_ie_i$ one has
\[
  \|\jmath_\lambda(b)\|_S
  \leq\max_i\bigl(\|a_i\|_{R_\lambda}\,
                    \|\jmath_\lambda(e_i)\|_S\bigr),
\]
which bounds \eqref{eq:jlambda} with respect to a finite-module quotient norm
on $B_\lambda$.  Such a quotient norm induces the given affinoid topology,
because a finite morphism of affinoid algebras is strict.
If $b\in B_\lambda^\circ$, the set $\{b^m:m\geq0\}$ is bounded;
boundedness of $\jmath_\lambda$ then shows that
$\{\jmath_\lambda(b)^m:m\geq0\}$ is bounded in $S$.  Thus
$\jmath_\lambda(B_\lambda^\circ)\subseteq S^\circ$.

We next prove that the map of Berkovich spectra
$\mathcal M(S)\to\mathcal M(R_\lambda)$ is surjective.  Fix
$x_\lambda\in\mathcal M(R_\lambda)$.  For $\mu\geq\lambda$, let
\[
  F_\mu=
   \{x_\mu\in\mathcal M(R_\mu):x_\mu|_{R_\lambda}=x_\lambda\}.
\]
Each $F_\mu$ is a nonempty finite discrete space.  Consider the compact
space $\prod_{\mu\geq\lambda}F_\mu$.  For every
$\lambda\leq\mu\leq\nu$, impose the closed condition that the
$\nu$-component restrict to the $\mu$-component.  Any finite collection
of these conditions has a solution: choose an upper bound $\tau$ for all
indices occurring in it, choose a point of $F_\tau$, and restrict that
point to the finitely many components.  The finite intersection property
therefore gives
\begin{equation}\label{eq:filtered-fibres}
  \varprojlim_{\mu\geq\lambda}F_\mu\neq\varnothing.
\end{equation}
Since $\Lambda_{\geq\lambda}$ is cofinal in $\Lambda$, an element of this
inverse limit defines a multiplicative seminorm $|\cdot|$ on
$\varinjlim_{\mu\in\Lambda}R_\mu$ satisfying
$|a|\leq\rho(a)$.  It therefore extends uniquely and continuously to
$S$, and the extension lies in $\mathcal M(S)$ above $x_\lambda$.

The surjection $\mathcal M(S)\to\mathcal M(R_\lambda)$ factors through
$\mathcal M(B_\lambda)$ by \eqref{eq:jlambda}.  Since
$\mathcal M(B_\lambda)\to\mathcal M(R_\lambda)$ is a homeomorphism,
surjectivity of
the composite implies that
\[
  \mathcal M(S)\longrightarrow\mathcal M(B_\lambda)
\]
is surjective.  Therefore, for every $b\in B_\lambda$,
\begin{equation}\label{eq:norm-identity}
\begin{aligned}
  \rho_{B_\lambda}(b)
  &=\sup_{x\in\mathcal M(B_\lambda)}|b(x)| \\
  &=\sup_{y\in\mathcal M(S)}|\jmath_\lambda(b)(y)|
   =\rho_S(\jmath_\lambda(b)).
\end{aligned}
\end{equation}
Since $B_\lambda$ is reduced, $\rho_{B_\lambda}$ is a norm.  Thus
$\jmath_\lambda$ is an isometric embedding.

Set
\[
  D=\operatorname{im}\!\left(
       \varinjlim_{\lambda\in\Lambda}B_\lambda\longrightarrow S
     \right).
\]
The inclusions $R_\lambda\subseteq B_\lambda$ give
\[
  \operatorname{im}\!\left(
    \varinjlim_{\lambda\in\Lambda}R_\lambda\longrightarrow S
  \right)
  \subseteq D\subseteq S.
\]
The left-hand ring is dense in $S$ by the definition of $S$; hence $D$
is dense in $S$.  Equation~\eqref{eq:norm-identity} identifies the separated
normed colimit of the $B_\lambda$ with $D$ endowed with the restriction of
$\rho_S$.  Since $S$ is complete, the inclusion extends uniquely to an
isometric isomorphism
\[
  \widehat{\varinjlim_{\lambda\in\Lambda}B_\lambda}^{\,\rho}
    \xrightarrow{\ \sim\ }S.
\]
This proves \eqref{eq:uh-uniform-completion} for the reduced tower.
Lemma~\ref{lem:reduction-completion} gives the same equality for the
original finite-level rings.
\end{proof}

\begin{theorem}[Universally injective pullback]
\label{thm:ui-pullback}
Let $K/\mathbb Q_p$ be a perfectoid field and let
$f\colon X\to Y$ be a universally injective morphism in $\Rig_K$.  The
pullback of a perfectoid profinite \'etale tower over $Y$ is a perfectoid
profinite \'etale tower over $X$.  If the original tower is a $G$-torsor,
its pullback is a $G$-torsor.  If, moreover, $X$ and $Y$ are affinoid,
$f$ is finite, and the original tower is affinoid perfectoid, then the
pullback tower is affinoid perfectoid.
\end{theorem}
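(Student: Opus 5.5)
The plan is to reduce everything to the affinoid finite statement, which is Lemma~\ref{lem:uh-completion} combined with the factorization of Lemma~\ref{lem:finite-factor}, and then to globalize using Proposition~\ref{prop:ui-factorization}.

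\emph{Affinoid finite case and torsor claim.} Let $f\colon X=\Spa(B,B^\circ)\to Y=\Spa(A,A^\circ)$ be finite and universally injective, and let $(Y_\lambda)$ be affinoid perfectoid. Factor $f$ as $X\xrightarrow{q}Z\xrightarrow{j}Y$ using Lemma~\ref{lem:finite-factor}. Here $Z=\Spa(A/I)$ is the closed analytic image and $q$ is a finite universal homeomorphism.

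\emph{Step 1 (closed immersion).} First we show that the pulled-back tower $(Z\times_YY_\lambda)$, with rings $R_\lambda=A_\lambda/IA_\lambda$, is affinoid perfectoid. Its uniform completion is a quotient of $S_Y=\widehat{\varinjlim A_\lambda}$ by the closure of $IS_Y$, followed by uniformization. This is the Zariski-closed situation of Scholze's theory: the Zariski closed subset $\{f\in S_Y: f=0 \text{ on } V(I)\}$ of an affinoid perfectoid space is affinoid perfectoid. Concretely, the uniform completion of $\varinjlim R_\lambda$ equals the uniform completion of $S_Y/\overline{IS_Y}$. By Scholze's result on Zariski closed subsets (or Bhatt--Scholze / Kedlaya--Liu), this is perfectoid. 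This identification, namely checking that the spectral seminorms of the finite-level quotients match the quotient of $S_Y$, is the main technical point. I would handle it by the Berkovich-spectrum surjectivity argument already used in the proof of Lemma~\ref{lem:uh-completion}: $\mathcal M$ of the completed quotient surjects onto each $\mathcal M(R_\lambda)$, so the maps are isometric for the spectral norms.

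\emph{Step 2 (universal homeomorphism).} Lemma~\ref{lem:uh-completion} then applies to $q$ and the tower over $Z$. It shows that $(X\times_YY_\lambda)=(X\times_ZZ_\lambda)$ has the same completion $S$, which is perfectoid. Hence the pulled-back tower is affinoid perfectoid.

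\emph{Torsor claim.} The $G$-torsor assertion is formal: the finite-level Galois actions are pulled back, and fibre products preserve the finite \'etale Galois torsor structure $Y_\mu\times_{Y_\lambda}Y_\mu\simeq\coprod_{G_\lambda/G_\mu}Y_\mu$.

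\emph{General case.} Suppose the tower over $Y$ is perfectoid, witnessed at level $\lambda$ by an affinoid cover $\{V_i\}$ of $Y_\lambda$. The base change $f_\lambda\colon X_\lambda=X\times_YY_\lambda\to Y_\lambda$ is still universally injective. Apply Proposition~\ref{prop:ui-factorization} to $f_\lambda$. This gives an admissible affinoid cover $\{U_k\}$ of $X_\lambda$ with finite maps $U_k\to V'_k$, where the $V'_k\subseteq Y_\lambda$ are affinoid opens.

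We may shrink $V'_k$ to rational subsets contained in some $V_i$, refining the $U_k$ accordingly by preimage, which stays finite. Rational localizations of affinoid perfectoid towers remain affinoid perfectoid, since they are rational subsets of the limit perfectoid space. So each tail tower over $V'_k$ is affinoid perfectoid, and the affinoid finite case gives affinoid perfectoidness over $U_k$. Hence the pullback tower is perfectoid in the sense of Definition~\ref{def:four-perfectoid-notions}(3).

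The obstacle I expect is Step 1. It requires the perfectoidness of the uniform completion of a quotient of a perfectoid tower by a finitely generated ideal, together with the agreement of this completion with the completion of the finite-level quotients. I would cite Scholze's Zariski-closed lemma (Scholze, torsion paper, Lemma~II.2.2) for the first part and prove the second via the spectral comparison above.
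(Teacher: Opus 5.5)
Your proposal is correct and follows the paper's proof. In both, you factor $f$ into a finite universal homeomorphism followed by a closed immersion: globally via Lemma~\ref{lem:finite-factor} in the finite affinoid case, and locally via Proposition~\ref{prop:ui-factorization} with targets shrunk to rational subsets. You then show the closed-immersion pullback tower is affinoid perfectoid, apply Lemma~\ref{lem:uh-completion} to the universal homeomorphism, and globalize using Scholze's rational-localization lemma.

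The differences are minor. You skip the paper's preliminary reduction to reduced rings, which is harmless because Lemma~\ref{lem:uh-completion} and your spectral-seminorm argument both handle nilpotents. For the closed-immersion step, you use the Zariski-closed lemma for affinoid perfectoid spaces together with the Berkovich-surjectivity isometry argument, whereas the paper simply cites Kedlaya--Liu's Proposition~5.1.3(f).
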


\begin{proof}
We first prove the following local assertion.  Suppose that $Y$ is
affinoid and that the given presentation of $\mathcal Y$ is affinoid
perfectoid.  Then its pullback along a universally injective
$f\colon X\to Y$ is perfectoid.  If in addition $X$ is affinoid and $f$
is finite, the pulled-back tower is affinoid perfectoid.

Put $X_\lambda=X\times_YY_\lambda$.  Finite \'etale base change commutes
with reduction, so there are Cartesian identifications
\begin{equation}\label{eq:reduced-pullback-tower}
  (X_\lambda)_{\mathrm{red}}
   =X_{\mathrm{red}}\times_{Y_{\mathrm{red}}}
      (Y_\lambda)_{\mathrm{red}}.
\end{equation}
The morphism $f_{\mathrm{red}}\colon X_{\mathrm{red}}\to
Y_{\mathrm{red}}$ is universally injective.  Indeed, for every
$T\to Y_{\mathrm{red}}$, the natural morphism
\[
  X_{\mathrm{red}}\times_{Y_{\mathrm{red}}}T
  \longrightarrow X\times_YT
\]
is a nilpotent closed immersion and hence a homeomorphism on underlying
topological spaces; injectivity follows from that of the base change of
$f$.  On every affinoid open,
Lemmas~\ref{lem:reduction-completion} and
\ref{lem:integral-uniform-comparison} identify the perfectoid condition
for a tower with that of its levelwise reduction.  We may therefore
replace $f$ and the two towers by their reductions and assume that $X$
and $Y$ are reduced.

Apply Proposition~\ref{prop:ui-factorization}.  Because $Y$ is affinoid,
we may refine its covering so that every target $V_i$ is a rational
affinoid subdomain of $Y$.  The tower restricted to $V_i$ is affinoid
perfectoid by \cite[Lemma~4.5(i)]{Scholze2013}.  It is enough to work with
one factorization
\[
  U=\Spa(B,B^\circ)\xrightarrow{q}
  Z=\Spa(R,R^\circ)\xrightarrow{j}
  V=\Spa(A,A^\circ),
\]
where $q$ is a finite universal homeomorphism and $j$ is a closed
immersion.  The ring $B$ is reduced.  If $I=\ker(A\to B)$, then
$R=A/I$ embeds into $B$ and is also reduced.

Write the rings of the restricted tower over $V$ as $A_\lambda$.  Its
pullback to $Z$ has rings
\begin{equation}\label{eq:closed-pullback-rings}
  R_\lambda=A_\lambda\widehat\otimes_AR=A_\lambda/IA_\lambda.
\end{equation}
The quotient $A\twoheadrightarrow R$ is strict and surjective.  Hence
\cite[Proposition~5.1.3(f)]{KedlayaLiu2019}, applied to
\eqref{eq:closed-pullback-rings}, gives an affinoid perfectoid finite
\'etale tower $(R_\lambda)_{\lambda\in\Lambda}$ and a perfectoid
$K$-algebra
\[
  S=\widehat{\varinjlim_{\lambda\in\Lambda}R_\lambda}^{\,\rho}.
\]
Kedlaya--Liu state the proposition for a sequence; its proof uses only
the completed direct limit and therefore applies verbatim to the present
directed system.

The pullback from $Z$ to $U$ has rings
\begin{equation}\label{eq:uh-pullback-rings}
  B_\lambda=R_\lambda\widehat\otimes_RB
     =A_\lambda\widehat\otimes_AB.
\end{equation}
Lemma~\ref{lem:uh-completion}, applied to $q$, gives
\[
  \widehat{\varinjlim_{\lambda\in\Lambda}B_\lambda}^{\,\rho}=S.
\]
Thus the actual tower \eqref{eq:uh-pullback-rings} over $U$ is affinoid
perfectoid.  The sets $U_i$ cover $X$, proving the first part of the local
assertion.

If $X$ is affinoid and $f$ is finite, Lemma~\ref{lem:finite-factor}
instead gives one global factorization
\[
  X\longrightarrow Z\longrightarrow Y
\]
with $X\to Z$ a finite universal homeomorphism and $Z\to Y$ a closed
immersion.  The same two completed-algebra calculations, now made
globally, show that the uniform completion of the pullback tower is $S$.
It is therefore affinoid perfectoid.  This proves the local assertion.

We now treat a general perfectoid tower over an arbitrary $Y$.  By
Definition~\ref{def:four-perfectoid-notions}(3), choose
$\lambda\in\Lambda$ and an affinoid open covering
$Y_\lambda=\bigcup_{a\in I}V_a$ such that the tail tower over every
$V_a$ is affinoid perfectoid.  Set
\[
  W_a=(X\times_YY_\lambda)\times_{Y_\lambda}V_a.
\]
The morphism $W_a\to V_a$ is a base change of $f$ and is therefore
universally injective.  Applied to the pulled-back tail, the construction
in the local assertion gives an affinoid open covering
$W_a=\bigcup_jU_{a,j}$ for which every restricted tail tower over
$U_{a,j}$ is affinoid perfectoid.  The opens $U_{a,j}$ cover
$X\times_YY_\lambda$, so Definition~\ref{def:four-perfectoid-notions}(3)
shows that the pulled-back tower is perfectoid.
At every quotient level, a finite $G/H$-torsor remains a $G/H$-torsor
after base change, proving the assertion about $G$.  Under the additional
affinoid and finite hypotheses in the theorem, the final clause follows
directly from the affinoid part of the local assertion.
\end{proof}

\subsection{Construction of the torsor}\label{subsec: construction counter ex}

Choose $\varpi\in\Cp$ with $0<|\varpi|<1$, and let
\[
  \TT^2=\Spa\Cp\langle T_1^{\pm1},T_2^{\pm1}\rangle.
\]
Consider the rational subdomain
\[
  Y=\bigl\{|T_1-1|\leq|\varpi|,\ |T_2-1|\leq|\varpi|\bigr\}
   \subset\TT^2.
\]
Writing
\[
  T_1=1+\varpi u,
  \qquad
  T_2=1+\varpi v,
\]
identifies $Y$ with $\Spa\Cp\langle u,v\rangle$.

For every $n\geq0$, put
\[
  \TT_n^2
  =\Spa\Cp\langle
      T_1^{\pm1/p^n},T_2^{\pm1/p^n}
    \rangle
\]
and set
\[
  Y_n=Y\times_{\TT^2}\TT_n^2,
\]
where $\TT_n^2\to\TT^2$ is induced by
$T_i\mapsto(T_i^{1/p^n})^{p^n}$ for $i=1,2$.  The transition morphism
$Y_{n+1}\to Y_n$ is induced by
\[
  T_i^{1/p^n}\longmapsto
  \bigl(T_i^{1/p^{n+1}}\bigr)^p
  \qquad(i=1,2).
\]
Thus $Y_n\to Y$ is a finite \'etale torsor under
$(\mu_{p^n})^2$.

Choose compatible primitive $p^n$-th roots of unity in $\Cp$.  They
identify the group acting on the tower with
\[
  \varprojlim_n(\mu_{p^n})^2(\Cp)
    \simeq\mathbb Z_p^2=:G.
\]
Hence $(Y_n)_{n\geq0}$ is a profinite \'etale $G$-torsor over $Y$.

The standard toric tower $(\TT_n^2)_{n\geq0}$ is affinoid perfectoid by
\cite[Example~4.4]{Scholze2013}; let $\widetilde{\TT}^{\,2}$ denote its
associated affinoid perfectoid space.  Define the rational subdomain
\[
  \widetilde Y
  =\bigl\{|T_1-1|\leq|\varpi|,\ |T_2-1|\leq|\varpi|\bigr\}
  \subset\widetilde{\TT}^{\,2}.
\]
By \cite[Lemma~4.5(i)]{Scholze2013}, this is the affinoid perfectoid
space associated with $(Y_n)_{n\geq0}$ and
\[
  \widetilde Y\sim\varprojlim_nY_n.
\]
Write $\pi_Y\colon\widetilde Y\to Y$ for the resulting map.  Thus
$(Y_n)_{n\geq0}$ is an affinoid perfectoid profinite \'etale $G$-torsor
over $Y$.

Let
\[
  Z=V(v^2-u^3)\subset Y.
\]
The cusp $Z$ is not seminormal.  Its seminormalization, which here equals
its normalization, is
\begin{equation}\label{eq:cusp-normalization}
  \nu\colon X=\Spa\Cp\langle t\rangle\longrightarrow Z,
  \qquad
  u\longmapsto t^2,
  \qquad
  v\longmapsto t^3.
\end{equation}
Put $A=\Cp\langle t^2,t^3\rangle$ and $B=\Cp\langle t\rangle$.  The element
$t$ satisfies $T^2-t^2\in A[T]$, and $B=A+At$; hence $A\to B$ is finite.
Over $D(t^2)\subset\Spec(A)$ its inverse is given by $t=t^3/t^2$.  The
complement is the point $(t^2,t^3)$, whose inverse image is the single
point $(t)$, and both residue fields equal $\Cp$.  Thus
$\Spec(B)\to\Spec(A)$ is bijective and induces isomorphisms on all residue
fields.  It is subintegral by
\cite[Lemma~1.4.5]{KedlayaLiu2019}.  Since $B$ is normal and
$t^2,t^3\in A$ whereas $t\notin A$, it is the proper seminormalization of
$A$.  By \cite[Proposition~2.3.2]{GuoHT}, $\nu$ is a finite universal
homeomorphism of rigid spaces.  The space $X$ is a smooth rigid analytic
curve.  If $\iota\colon Z\hookrightarrow Y$ denotes the closed immersion,
set
\[
  f=\iota\circ\nu\colon X\longrightarrow Y.
\]
For every $n$, define
\[
  X_n=X\times_YY_n=\Spa(B_n,B_n^\circ),
  \qquad
  B_n=\Gamma(X_n,\OO_{X_n}).
\]
Thus $X_n\to X$ is a finite \'etale $(\mu_{p^n})^2$-torsor.

The morphism $\nu$ is a universal homeomorphism and $\iota$ is a closed
immersion.  Both are universally injective, so their composite $f$ is
universally injective.  Moreover, $f$ is finite.  The preceding
construction and Theorem~\ref{thm:ui-pullback} therefore show that
$(X_n)_{n\geq0}$ is an affinoid perfectoid profinite \'etale $G$-torsor.
Its associated affinoid perfectoid space is
\[
  \widetilde X=\Spa(B_\infty,B_\infty^\circ),
  \qquad
  B_\infty=\widehat{\varinjlim_nB_n}^{\,\rho}.
\]
It satisfies $\widetilde X\sim\varprojlim_nX_n$; write
$\pi_X\colon\widetilde X\to X$ for the resulting map.

\subsection{Failure of Sen surjectivity}
\label{sec:sen-proof}
Now we can prove that an affinoid perfectoid torsor can have non-surjective Sen map.

\begin{theorem}\label{thm: perfd not surjective}
    The $\mathbb Z_p^2$-torsor $\pi_X:\widetilde X\to X$ in Subsection \ref{subsec: construction counter ex} is affinoid perfectoid but the Sen map is not surjective at $t=0$.
\end{theorem}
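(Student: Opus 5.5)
Affinoid perfectoidness is already established in Subsection~\ref{subsec: construction counter ex}. The map $f=\iota\circ\nu$ is finite and universally injective, and the tower over $Y$ is affinoid perfectoid. Hence Theorem~\ref{thm:ui-pullback} shows that $(X_n)_n$ is an affinoid perfectoid $\Zp^2$-torsor. It remains to show that the Sen map fails to be surjective at $t=0$. The plan is to apply the functoriality square \eqref{eq:sen-functoriality} to the $G$-equivariant morphism $\widetilde X\to\widetilde Y$ over $f$, with $H=G=\Zp^2$ and $\varphi=\mathrm{id}$. This gives
\[
 \theta_{\widetilde X}\circ(\mathrm{id}\otimes d\varphi^\vee)
 =(\mathrm{id}\otimes df)\circ f^*\theta_{\widetilde Y}.
\]
Since $d\varphi^\vee$ is the identity of $\Oh_X\otimes_{\Qp}\mathfrak g^\vee$, the left side is all of $\theta_{\widetilde X}$. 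Therefore the image of $\theta_{\widetilde X}$ lies in the image of
\[
 \mathrm{id}\otimes df:\Oh_X\otimes_{\OO_X}f^*\Omega^1_{Y/\Cp}(-1)\to\Oh_X\otimes_{\OO_X}\Omega^1_{X/\Cp}(-1).
\]

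Next I compute $df$. We have $\Omega^1_Y$ free on $du,dv$ and $\Omega^1_X=\OO_X\,dt$. Then $f^*du=2t\,dt$ and $f^*dv=3t^2\,dt$, so the image of $df$ is $t\OO_X\,dt$. Because $\Oh_X$ is flat over $\OO_X$ (or simply because we tensor the explicit image), the image of $\mathrm{id}\otimes df$ is $t\,\Oh_X\,dt(-1)$. Consequently $\operatorname{im}\theta_{\widetilde X}\subseteq t\,\Oh_X\,dt(-1)$.

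Finally I must show that this submodule is proper near $t=0$, so that the map is not surjective there. One way is to pass to the fibre at the rigid point $0$. Take any pro-étale neighbourhood $W$ of a point over $0$, for instance an affinoid perfectoid $W$ lying over a small disc around $0$ and meeting the fibre. Suppose $dt$ belonged to the image. Then $dt=t\,g\,dt$ for some section $g$ of $\Oh_X$ on a cover of $W$, so $1=tg$. Evaluating at a point of $W$ over $t=0$ gives $1=0$, which is a contradiction. Equivalently, the cokernel tensored with the residue field at $0$ is nonzero. This is the step needing some care: we need to make sense of "surjective at $t=0$" stalkwise. Surjectivity of a morphism of sheaves would require local surjectivity on a pro-étale cover of a neighbourhood of $0$, and evaluation at points over $0$ rules this out. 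Away from $0$, $t$ is invertible, so the map can be surjective there, which is consistent with the paper's remark.
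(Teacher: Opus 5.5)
Your proposal is correct, and up to the inclusion $\operatorname{im}\theta_{\widetilde X}\subseteq t\Oh_X\,dt(-1)$ it is the paper's argument: functoriality for the $G$-equivariant map $\widetilde X\to\widetilde Y$ with $\varphi=\mathrm{id}$, together with $\operatorname{im}(df)=t\OO_X\,dt$. Flatness of $\Oh_X$ over $\OO_X$ is not needed at that point. The image of $\mathrm{id}\otimes df$ is simply the $\Oh_X$-span of $2t\,dt$ and $3t^2\,dt$, which is $t\Oh_X\,dt$ because $2\in\Cp^\times$.

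The two routes differ in how they show that this submodule is proper.

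The paper first reduces to global sections over $\widetilde X=\Spa(B_\infty,B_\infty^\circ)$, using $v$-descent for vector bundles and $v$-acyclicity of affinoid perfectoids. It then invokes Bhatt--Scholze (Zariski closed equals strongly Zariski closed) to produce a nonzero perfectoid quotient $B_\infty\twoheadrightarrow C$ killing $t$. This gives $tB_\infty\neq B_\infty$.

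You argue directly with the sheaf. Surjectivity would make $t$ a unit in $\Oh_X$ on some pro-\'etale cover of a neighbourhood of a point above the origin. Covers are jointly surjective, so some member of the cover, refined to be affinoid perfectoid, contains a point $y$ lying over $t=0$. There $|t(y)|=0$, which contradicts $tg=1$.

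Your version is more elementary: it needs neither the descent/acyclicity reduction nor the Bhatt--Scholze theorem. It also proves failure on every neighbourhood of the origin, which matches the phrase ``at $t=0$'' more literally than the paper's global statement on $X_{\proet}$.

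The same evaluation works inside the paper's proof as well. Take a point of $\widetilde X$ over $t=0$, which exists because $\pi_X$ is surjective; evaluating there shows $t\notin B_\infty^\times$. So your argument could replace the Bhatt--Scholze step in the paper's own proof.
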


\begin{proof}
It is already proved that $\pi_X:\widetilde X\to X$ is affinoid perfectoid in Subsection \ref{subsec: construction counter ex}. We only need to show that the Sen map is not surjective at $t=0$.

For every $n\geq0$, the definition $X_n=X\times_YY_n$ gives a Cartesian
square
\[
\begin{CD}
  X_n @>{f_n}>> Y_n \\
  @V{\pi_{X,n}}VV @VV{\pi_{Y,n}}V \\
  X @>{f}>> Y.
\end{CD}
\]
These squares commute with the transition morphisms.  Passing to the
perfectoid tilde-limits gives the $G$-equivariant commutative square
\[
\begin{CD}
  \widetilde X @>{\widetilde f}>> \widetilde Y \\
  @V{\pi_X}VV @VV{\pi_Y}V \\
  X @>{f}>> Y.
\end{CD}
\]
Both horizontal maps are induced by $u=t^2$ and $v=t^3$, and the group
homomorphism $G\to G$ attached to this diagram is the identity.

The map on differentials is
\begin{equation}\label{eq:differential}
  df\colon f^*\Omega^1_{Y/\Cp}\longrightarrow\Omega^1_{X/\Cp},
  \qquad
  du\longmapsto2t\,dt,
  \qquad
  dv\longmapsto3t^2\,dt.
\end{equation}
In particular,
\begin{equation}\label{eq:image-df}
  \im(df)=t\OO_X\,dt.
\end{equation}

Let $f_{\proet}\colon X_{\proet}\to Y_{\proet}$ be the induced morphism of
pro-\'etale ringed topoi.  Apply the functoriality of the geometric Sen
morphism
\cite[Theorem~3.3.4, diagram~(3.12)]{RodriguezCamargo2026} to the preceding
$G$-equivariant diagram.  All terms in the following square are sheaves on
$X_{\proet}$:
\[
\begin{CD}
  f_{\proet}^*\mathfrak g_{\pi_Y}^\vee
    \otimes_{\mathbb Q_p}\Oh_X
    @>{f_{\proet}^*\theta_{\widetilde Y}}>>
  f^*\Omega^1_{Y/\Cp}\otimes_{\OO_X}\Oh_X(-1) \\
  @V{\simeq}VV
    @VV{df\otimes 1}V \\
  \mathfrak g_{\pi_X}^\vee\otimes_{\mathbb Q_p}\Oh_X
    @>{\theta_{\widetilde X}}>>
  \Omega^1_{X/\Cp}\otimes_{\OO_X}\Oh_X(-1).
\end{CD}
\]
The left vertical isomorphism is induced by
$\operatorname{id}_{\Lie(G)}$.  Thus
\[
  \theta_{\widetilde X}
    =(df\otimes1)\circ f_{\proet}^*\theta_{\widetilde Y}.
\]
Combining this square with \eqref{eq:image-df} gives
\begin{equation}\label{eq:image-theta}
  \im\bigl(\theta_{\widetilde X}\bigr)
  \subseteq t\Oh_X\,dt(-1)
\end{equation}
as subsheaves of
$\Omega^1_{X/\Cp}\otimes_{\OO_X}\Oh_X(-1)$ on $X_{\proet}$.

By the construction,
\[
  \widetilde X=\Spa(B_\infty,B_\infty^\circ)
\]
is affinoid perfectoid.  By $v$-descent for vector bundles and affinoid
$v$-acyclicity
\cite[Theorem~17.1.3 and Lemma~17.1.8]{ScholzeWeinstein2020}, it is enough
to show that the induced map on sections
\[
  \Gamma(\widetilde X,\theta_{\widetilde X})\colon
  B_\infty\otimes_{\mathbb Q_p}\Lie(G)^\vee
  \longrightarrow B_\infty\,dt(-1)
\]
is not surjective.  Equation~\eqref{eq:image-theta} gives
\[
  \im\bigl(\Gamma(\widetilde X,\theta_{\widetilde X})\bigr)
  \subseteq tB_\infty\,dt(-1).
\]
The Zariski closed subset
\[
  V_{\widetilde X}(t)=\widetilde X\times_XV_X(t)
\]
is nonempty because $\pi_X$ is a torsor.  Zariski closed and strongly
Zariski closed subsets of an affinoid perfectoid space coincide
\cite[Theorem~7.4 and Remark~7.5]{BhattScholze2022}.  Hence
$V_{\widetilde X}(t)$ is represented by a nonzero perfectoid Tate quotient
$B_\infty\twoheadrightarrow C$ on which $t$ vanishes.  Consequently
\[
  C\otimes_{B_\infty}
  \frac{B_\infty\,dt(-1)}{tB_\infty\,dt(-1)}
  =C\,dt(-1)\neq0.
\]
Thus $tB_\infty\,dt(-1)$ is a proper submodule of
$B_\infty\,dt(-1)$, so the displayed map on sections is not surjective.
By $v$-descent, $\theta_{\widetilde X}$ is not surjective on
$X_{\proet}$.
\end{proof}

\subsection{Finite pullback and diamondian perfectoid towers}
\label{sec:diamondian-pullback}
The goal of this subsection is to compare our construction and the one in \cite[Remark 5.5]{BCH25}.
Let $K/\mathbb Q_p$ be a complete nonarchimedean field. The key of construction in \cite[Remark 5.5]{BCH25} is that finite
pullback preserves diamondian affinoid perfectoidness.

\begin{proposition}
\label{prop:finite-diamondian-pullback}
Let $X$ and $Y$ be affinoid rigid analytic spaces over $K$, let
$f\colon X\to Y$ be a finite morphism, and let
$\mathcal Y=(Y_\lambda)_{\lambda\in\Lambda}$ be a diamondian affinoid
perfectoid profinite \'etale tower over $Y$.  Then
\[
  \mathcal X=(X\times_YY_\lambda)_{\lambda\in\Lambda}
\]
is diamondian affinoid perfectoid.
\end{proposition}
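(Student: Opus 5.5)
The plan is to compute the limit diamond of the pulled-back tower directly as a fibre product and then recognize it as a Zariski closed subspace of an affinoid perfectoid space. Write $\mathcal Y^\diamond=\varprojlim_\lambda Y_\lambda^\diamond$, and by hypothesis $\mathcal Y^\diamond\simeq\widetilde Y^\diamond$ with $\widetilde Y=\Spa(S,S^+)$ affinoid perfectoid. Since the diamond functor commutes with fibre products and limits commute with limits, we get
\[
 \mathcal X^\diamond=\varprojlim_\lambda (X\times_YY_\lambda)^\diamond
 \simeq \varprojlim_\lambda \bigl(X^\diamond\times_{Y^\diamond}Y_\lambda^\diamond\bigr)
 \simeq X^\diamond\times_{Y^\diamond}\widetilde Y^\diamond .
\]
So it suffices to show that the fibre product $X^\diamond\times_{Y^\diamond}\widetilde Y$ is representable by an affinoid perfectoid space.

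Next I would factor $f$ as in Lemma~\ref{lem:finite-factor}, or more simply write $X=\Spa(B,B^\circ)$ with $B$ finite over $A=\OO(Y)$. Choose generators, giving a surjection $A[T_1,\dots,T_n]/(P_1,\dots,P_n,\ldots)\twoheadrightarrow B$ with monic $P_i$. This realizes $X$ as a Zariski closed subspace of the finite flat $Y$-space $W=\Spa A\langle T_1,\dots,T_n\rangle/(P_1(T_1),\dots,P_n(T_n))$, which is contained in a closed polydisc over $Y$. The problem therefore reduces to two base change steps.

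The first step is the pullback of $\widetilde Y$ along the finite free map $W\to Y$. This gives $\Spa$ of $S\otimes_A\OO(W)=S\langle T\rangle/(P_i(T_i))$. To see that this is affinoid perfectoid after taking the diamond, I would embed it as a Zariski closed subset of $\widetilde Y\times\mathbb B^n$. Then I would replace the closed polydisc $\mathbb B^n$ by its perfectoid cover $\widetilde{\mathbb B}^n$, which is a $v$-cover compatible with diamonds. Representability of Zariski closed subsets of affinoid perfectoid spaces by affinoid perfectoid spaces comes from \cite[Theorem~7.4 and Remark~7.5]{BhattScholze2022}, Zariski closed equals strongly Zariski closed. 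This shows that the diamond $(\widetilde Y\times_Y W)^\diamond$ is represented by an affinoid perfectoid space, namely the Zariski closed locus cut out by the $P_i$ inside $\widetilde Y\times\widetilde{\mathbb B}^n$ modulo the descent along $\widetilde{\mathbb B}^n\to\mathbb B^n$.

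That descent is the delicate point. To avoid it, I would instead argue that $\widetilde Y\times_Y W$ is finite over $\widetilde Y$. A finite étale-free-over-a-perfectoid algebra need not be perfectoid, but its associated diamond equals that of the uniform completion of its seminormalization/perfectoidization. Via the universal property of diamonds, $\Spd$ of a finite $S$-algebra $R$ is represented by $\Spa$ of the perfectoidization $R_{\mathrm{perfd}}$ (Bhatt--Scholze, \S7–8, for finite maps from perfectoids), which is affinoid perfectoid.

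The second step is the closed immersion $X\hookrightarrow W$. Its pullback is again Zariski closed in the affinoid perfectoid space obtained in the first step, so \cite[Theorem~7.4]{BhattScholze2022} applies once more, and the result is affinoid perfectoid.

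I expect the main obstacle to be the first step: justifying that, for a finite (not necessarily étale) $S$-algebra $R$, the diamond $\Spd(R,R^+)$ is represented by an affinoid perfectoid space. I would try first to use the Bhatt--Scholze result that perfectoidization $R_{\mathrm{perfd}}$ exists for such $R$ and is a quotient of a perfectoid ring with the same $v$-sheaf. I would combine this with the compatibility of $\Spd$ with finite fibre products over $\Spd S$.
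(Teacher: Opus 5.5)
Once you settle on the perfectoidization route, your argument is essentially the paper's: identify $\mathcal X^\diamond$ with $X^\diamond\times_{Y^\diamond}\widetilde Y^\diamond$, note that $X\times_Y\widetilde Y$ is finite over the affinoid perfectoid $\widetilde Y$, and represent its diamond by a perfectoidization via \cite[Proposition~8.5 and Theorem~10.11]{BhattScholze2022}. The factorization through $W$ and the second Zariski-closed step are superfluous, because that result applies directly to $S\otimes_AB$; the one correction is that you should perfectoidize the integral plus-ring (which is integral over $S^+$) and then invert $p$, not perfectoidize the rational finite algebra itself.
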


\begin{proof}
Put $X_\lambda=X\times_YY_\lambda$, and let
$P=\Spa(R,R^+)$ be an affinoid perfectoid space representing
$\mathcal Y^\diamond$.  Since inverse limits of v-sheaves commute with
fibre products,
\begin{equation}\label{eq:finite-diamond-pullback}
  \varprojlim_{\lambda\in\Lambda}X_\lambda^\diamond
   \simeq
  X^\diamond\times_{Y^\diamond}P^\diamond.
\end{equation}
Evaluation of $Y^\diamond$ on $P$ identifies the structural morphism
$P^\diamond\to Y^\diamond$ with a morphism $P\to Y$.  Put
$Q=X\times_YP$.  Then $Q\to P$ is finite, hence
$Q=\Spa(S,S^+)$ is affinoid and
$(R,R^+)\to(S,S^+)$ is a finite morphism of analytic Huber pairs; in
particular, $R^+\to S^+$ is integral.  By
\cite[Proposition~8.5 and Theorem~10.11]{BhattScholze2022}, the
perfectoidization $(S^+)^{\mathrm{perfd}}$ is a classical, integrally
perfectoid ring and satisfies
\[
  Q^\diamond=\operatorname{Spd}(S,S^+)
   \simeq
  \Spa\bigl((S^+)^{\mathrm{perfd}}[1/p],
             (S^+)^{\mathrm{perfd}}\bigr)^\diamond.
\]
Therefore \eqref{eq:finite-diamond-pullback} is represented by the
affinoid perfectoid space in the display.
\end{proof}

\begin{remark}\label{rem:bchh-example}
In \cite[Remark~5.5]{BCH25}, Bellovin--Cai--Howe
construct a diamondian affinoid perfectoid tower which is not affinoid
perfectoid. We revisit their example and spell out some details of the
argument. Let $C=\mathbb C_p$, choose
$\epsilon\in|C^\times|$ with $0<\epsilon<1$, and set
\[
  R=C\langle\epsilon^{-2}s\rangle,
  \qquad
  Y=\mathbb D_{\epsilon^2}=\Spa(R,R^\circ).
\]
For $n\geq0$, put
\[
  B_n=R[u_n]/\bigl(u_n^{p^n}-(1+s)\bigr),
  \qquad
  Y_n=\Spa(B_n,B_n^\circ),
\]
with transition homomorphism $B_n\to B_{n+1}$ defined by
$u_n\mapsto u_{n+1}^p$.  Since $|s|<1$, the element $1+s$ is a unit, so
each $Y_n\to Y$ and each transition map is finite \'etale.  If
\[
  T=C\langle u^{\pm1/p^\infty}\rangle
    =\widehat{\varinjlim_n
       C\langle u^{\pm1/p^n}\rangle}^{\,\rho},
\]
then
\[
  \widehat{\varinjlim_nB_n}^{\,\rho}
    =T\langle\epsilon^{-2}(u-1)\rangle,
  \qquad u=1+s.
\]
This is a rational localization of the perfectoid torus $\Spa(T,T^\circ)$
and is affinoid perfectoid
\cite[Theorem~6.3]{Scholze2012}; hence
$\mathcal Y=(Y_n)_{n\geq0}$ is affinoid perfectoid.

Put
\[
  A=C\langle\epsilon^{-1}t\rangle,
  \qquad
  X=\mathbb D_\epsilon=\Spa(A,A^\circ),
\]
and define $g\colon X\to Y$ by $g^\sharp(s)=t^2$.  The $R$-module
$A=R\oplus tR$ is finite free.  At level $n$, the pulled-back tower has
affinoid algebra
\begin{equation}\label{eq:bchh-pullback-algebra}
  A_n=A\widehat\otimes_RB_n
      =B_n[t]/(t^2-s)=B_n\oplus tB_n.
\end{equation}
Proposition~\ref{prop:finite-diamondian-pullback} shows that
$(\Spa(A_n,A_n^\circ))_{n\geq0}$ is diamondian affinoid perfectoid.

Let $y_n\in\mathcal M(B_n)$ be the point $s=0$, $u_n=1$, and let
$x_n\in\mathcal M(A_n)$ be its lift with $t=0$.  There is a
$C$-derivation
\[
  \delta_n\colon A_n\longrightarrow C,
  \qquad
  \delta_n(t)=1,
  \qquad
  \delta_n|_{B_n}=0,
\]
where $C$ is an $A_n$-module through evaluation at $x_n$.  Equivalently,
differentiating $u_n^{p^n}=1+t^2$ at $x_n$ gives
\[
  p^n u_n(x_n)^{p^n-1}\delta_n(u_n)
    =2t(x_n)\delta_n(t)=0,
  \qquad\text{hence}\qquad \delta_n(u_n)=0.
\]
The identities $u_n=u_{n+1}^p$ make the $\delta_n$ compatible.

We verify continuity for the spectral norms.  Write
$\rho_{B_n}$ and $\rho_{A_n}$ for the spectral norms.  If $\eta$ is the
Gauss point of $\mathcal M(R)$ and the characteristic polynomial of
$b\in B_n$ over $R$ is
\[
  T^d+c_1T^{d-1}+\cdots+c_d,
\]
then the nonarchimedean root bound and the Gauss norm give
\begin{equation}\label{eq:bchh-boundary}
  \rho_{B_n}(b)
   =\sup_{z\in\mathcal M(R)}\max_{1\leq i\leq d}|c_i(z)|^{1/i}
   =\max_{1\leq i\leq d}|c_i(\eta)|^{1/i}
   =\max_{y\mapsto\eta}|b(y)|.
\end{equation}
For $b\in B_n$, every point $x\in\mathcal M(A_n)$ above
$y\in\mathcal M(B_n)$ satisfies $|t(x)|=|s(y)|^{1/2}\leq\epsilon$.
Conversely, a point $y$ realizing the last maximum in
\eqref{eq:bchh-boundary} lifts to $\mathcal M(A_n)$ and satisfies
$|s(y)|=\epsilon^2$.  Therefore
\begin{equation}\label{eq:bchh-t-norm}
  \rho_{A_n}(tb)=\epsilon\rho_{B_n}(b).
\end{equation}
For $f=b_0+tb_1$ as in \eqref{eq:bchh-pullback-algebra}, let
$\sigma$ be the involution fixing $B_n$ and sending $t$ to $-t$.
Since $\sigma$ preserves the spectral norm,
\[
  tb_1=\frac{f-\sigma(f)}2,
  \qquad
  \rho_{A_n}(tb_1)\leq |2|^{-1}\rho_{A_n}(f).
\]
Equations \eqref{eq:bchh-t-norm} and
$\delta_n(f)=b_1(y_n)$ now give the uniform estimate
\begin{equation}\label{eq:bchh-derivation-bound}
  |\delta_n(f)|
   \leq\rho_{B_n}(b_1)
   =\epsilon^{-1}\rho_{A_n}(tb_1)
   \leq (|2|\epsilon)^{-1}\rho_{A_n}(f).
\end{equation}
Thus the compatible derivations extend to a continuous $C$-derivation
\[
  \delta\colon
  \widehat{\varinjlim_nA_n}^{\,\rho}\longrightarrow C.
\]
The compatible evaluations at $x_n$ extend to an evaluation
$\chi\colon\widehat{\varinjlim_nA_n}^{\,\rho}\to C$, relative to which
$\delta$ is a derivation, and $\delta(t)=1$.

Suppose that the pulled-back tower were affinoid perfectoid, and put
$P=\widehat{\varinjlim_nA_n}^{\,\rho}$.  Then $P$ would be a perfectoid
$C$-algebra by Lemma~\ref{lem:integral-uniform-comparison}.  Continuity of
$\delta$ makes
\[
  M=\sup_{a\in P^\circ}|\delta(a)|
\]
finite.  For $a\in P^\circ$, surjectivity of Frobenius on $P^\circ/p$
gives $a=b^p+pc$ with $b,c\in P^\circ$.  Since
$|\chi(b)|\leq1$,
\[
  |\delta(a)|
   =\bigl|p\chi(b)^{p-1}\delta(b)+p\delta(c)\bigr|
   \leq |p|M.
\]
Taking the supremum over $a\in P^\circ$ gives $M\leq|p|M$, hence $M=0$,
contrary to $\delta(t)=1$.  Thus the pulled-back tower is not affinoid
perfectoid; this is the derivation argument of
\cite[Lemma~3.9 and Remark~5.5]{BCH25}.
\end{remark}

\begin{remark}
    The example in Remark~\ref{rem:bchh-example} disproves the geometric
Ax--Sen--Tate density statement: for a profinite \'etale cover
$U\to X$, the ring $\OO_X(U)$ need not be dense in $\Oh_X(U)$
for the topology with neighborhood basis $p^m\Oh_X^+(U)$,
$m\geq0$. This answers negatively the question raised immediately
after \cite[Lemma~4.2]{Scholze2013}, already for a $\Zp$-torsor
over a smooth affinoid curve.
\end{remark}

\section{Generic surjectivity for curves}\label{sec:generic-curves}

The goal of this section is to prove Theorem~\ref{thm:intro-curve}. 
Let $X/\Cp$ be a smooth connected rigid analytic curve and
$\mathcal X=(X_J)_J$ a profinite \'etale $G$-tower, where $G$ is a
compact $p$-adic Lie group and $J$ ranges over its normal open subgroups.
Assume $\mathcal X$ is perfectoid in the sense of
Definition~\ref{def:four-perfectoid-notions}\textup{(3)}: after passing to a finite level it has an
affinoid open covering on which the tail $X_J=\Spa(A_J,A_J^\circ)$ gives
an affinoid perfectoid pair
\begin{equation}\label{eq:perfectoid-completion}
 A_\infty^+=\varprojlim_m
 \bigl(\varinjlim_J A_J^\circ\bigr)/p^m,
 \qquad A_\infty=A_\infty^+[1/p].
\end{equation}
Write $\pi:\widetilde X\to X$ for the resulting perfectoid space and
$\mathfrak g_\pi$ for the adjoint local system attached to
$\mathfrak g=\Lie G$. Its geometric Sen map is
\[
 \theta_{\widetilde X}:\Oh_X\otimes_{\Qp}\mathfrak g_\pi^\vee
 \longrightarrow\Oh_X\otimes_{\OO_X}\Omega^1_{X/\Cp}(-1).
\]

We will prove the following theorem.

\begin{theorem}\label{thm:curve}
The geometric Sen map is surjective at every type $2$ point of $X$.
Its nonsurjectivity locus is a locally finite closed analytic subset,
consisting of classical points, and finite when $X$ is quasicompact.
Its complement is Zariski open and dense.
\end{theorem}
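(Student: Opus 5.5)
The plan is to first reduce the global statement to a pointwise statement at type~2 points. Since $X$ is a curve, $\Omega^1_{X/\Cp}$ is a line bundle. Locally on a small affinoid $V\subseteq X$, the map $\theta_{\widetilde X}$ is given by a finite tuple of sections: fix a basis $\lambda_1,\dots,\lambda_r$ of $\mathfrak g^\vee$ after trivializing the coadjoint local system on a finite level. By $v$-descent these give sections $a_1,\dots,a_r$ of $\Oh_X(\widetilde V)$, one coefficient of $dt(-1)$ each. Surjectivity at a point $x$ means that some $a_i$ does not vanish on the fibre over $x$. By Theorem~\ref{thm:geometric-sen} applied to a faithful representation $\rho$, the vanishing of all $a_i$ along a fibre is equivalent to the vanishing of the Sen operator $\theta_{\mathcal V}$ there. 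That operator is $G$-invariant and hence descends to a section of $\mathcal{E}nd(\mathcal V)\otimes\Omega^1$ over $\OO_X$-coefficients at the decompleted level. So I would identify the nonsurjectivity locus with the zero locus of finitely many analytic functions (matrix entries of $\theta_{\mathcal V}$ on $V$). That locus is Zariski closed. On a connected smooth curve it is therefore either all of $V$ or a discrete set of classical points. Local finiteness, and finiteness for quasicompact $X$, then follow by gluing over a finite affinoid cover.

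It therefore suffices to show that at any type~2 point $x$ the Sen map is nonzero. Once that holds, no component of $X$ lies in the zero locus, and zeros can only occur at classical points: a positive-dimensional analytic zero set would contain type~2 points. Suppose for contradiction that $\theta_{\widetilde X}$ vanishes at a type~2 point $x$. Choose a faithful representation $\rho:G\to\GL(V)$ with injective differential. Restrict to the completed residue field $\mathcal H(x)$ and to a branch $\widetilde x$ above it: this gives a Galois extension $L/\mathcal H(x)$ with group $\Gamma\subseteq G$ the decomposition group, whose completion $\widehat L$ is perfectoid, since a point of a perfectoid space has perfectoid completed residue field. Vanishing of $\theta_{\mathcal V}$ at $x$ means that $V\otimes\widehat L$ is trivialized by a period matrix over $\widehat L$: the Sen operator of this $\mathcal H(x)$-semilinear representation is zero. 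The analogue of Sen's criterion \eqref{eq:intro-sen-zero} for type~2 fields should then give that the inertia image $\rho(I)$ is finite. Here inertia means the subgroup acting trivially on the residue field $\kappa(\mathcal H(x))$, which is a function field of a curve over $\overline{\mathbb F}_p$.

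Finiteness of inertia contradicts perfectoidness. If $\rho(I)$ is finite, then, since $\rho$ is faithful, after a finite extension $L$ becomes an unramified Lie extension of the type~2 field. Its completion has value group that of $\Cp$, and its residue field is a separable algebraic extension of $\kappa(\mathcal H(x))$. That residue field is not perfect, because the function field is imperfect and separable extensions stay imperfect. This contradicts the perfectness of the residue field of a perfectoid field.

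The main obstacle is the type~2 version of Sen's theorem: showing that a $\Cp$-admissible (zero-Sen-operator) $p$-adic Lie representation of a type~2 field has finite inertia image. I would follow Sen's original argument. First transfer Wewers's ramification estimates for type~2 fields so that the higher ramification filtration on $\Gamma$ compares with the Lie filtration, giving an analogue of Sen's filtration theorem. Then prove an approximation lemma reducing to completed unramified Lie extensions, so that Tate--Sen style normalized traces exist there. With these, the vanishing of the Sen operator forces $\rho(I)$ to be open in a trivially acting subgroup, hence finite, exactly as in the local field case. I expect the ramification-estimate transfer, which has to cope with the imperfect residue field, to need the most work.
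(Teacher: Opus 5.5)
Your proposal follows essentially the same route as the paper. It uses the analytic rank-locus dichotomy to reduce to type~2 points, then a representation with injective differential and a period matrix over the perfectoid field $\widehat L$. The core is a type~2 analogue of Sen's inertia theorem, obtained by transferring Wewers's estimates through discrete models to completed unramified Lie extensions (eventual linear growth of upper breaks plus normalized traces), and the residue-field obstruction to perfectoidness finishes the argument.

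Two points need tightening. First, the period matrix is not a purely pointwise consequence of vanishing at $x$. As in the paper, it comes from the vanishing of $\theta_{\widetilde X}$ on a whole neighbourhood (which your dichotomy supplies) via Rodr\'\i guez Camargo's zero-Sen descent and Ax--Sen--Tate. Second, in the last step $\kappa(L)$ is not itself separable over $\kappa(\mathcal H(x))$: it is a finite purely inseparable extension of the separable extension $\kappa(L^I)$. It still has $p$-degree $p$, so it is imperfect and the contradiction stands.
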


\subsection{Reduction to type 2 points}\label{sec:reduction}

\begin{lemma}\label{lem:branch-completion}
A compatible branch $(x_J)_J$ above a rank-one point $x\in X$ determines
$x_\infty\in\widetilde X$ with
\[
 \mathcal H(x_\infty)=\widehat L,
 \qquad L=\bigcup_J\mathcal H(x_J).
\]
This field is perfectoid. The perfectoid tower property is preserved
by \'etale localization at a finite level.
\end{lemma}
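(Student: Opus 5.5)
Work locally: after passing to a finite level $J_0$ we may assume, by Definition~\ref{def:four-perfectoid-notions}(3), that $x_{J_0}$ lies in an affinoid open $V=\Spa(A_{J_0},A_{J_0}^\circ)$ whose tail tower $(A_J)_{J\subseteq J_0}$ is affinoid perfectoid. Write $\widetilde V=\Spa(A_\infty,A_\infty^+)$ with $A_\infty$ as in \eqref{eq:perfectoid-completion}. By Lemma~\ref{lem:integral-uniform-comparison} it is the uniform completion of $\varinjlim_J A_J$.

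\textbf{The point $x_\infty$.} Each $x_J$ is a multiplicative seminorm on $A_J$, and compatibility of the branch gives a multiplicative seminorm $|\cdot|_\infty$ on $\varinjlim_J A_J$. It is bounded by the spectral seminorm $\rho$, since $|a(x_J)|\le\rho_J(a)=\rho(a)$. It therefore extends continuously to $A_\infty^{\mathrm u}=A_\infty$, which gives a point of $\mathcal M(A_\infty)$. This is the same argument used in the proof of Lemma~\ref{lem:uh-completion}. Its rank-one valuation satisfies $|A_\infty^+|\le1$, so it defines $x_\infty\in\widetilde V\subseteq\widetilde X$ lying over every $x_J$.

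\textbf{The residue field.} For each $J$ the inclusion $\mathcal H(x_J)\hookrightarrow\mathcal H(x_\infty)$ is compatible, so we get an isometric map $\widehat L\to\mathcal H(x_\infty)$. For the reverse inclusion, note that the image of $\varinjlim_J A_J$ is dense in $A_\infty$. Hence the image of $A_\infty$ in $\mathcal H(x_\infty)$ lies in the closure of $\bigcup_J \mathrm{im}(A_J)\subseteq \widehat L$. The field $\mathcal H(x_\infty)$ is the completion of the fraction field of $A_\infty/\ker$. Fractions $a/b$ with $b(x_\infty)\neq0$ can be approximated by fractions of finite-level elements, because division is continuous away from zero. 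So $\mathcal H(x_\infty)=\widehat L$.

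\textbf{Perfectoidness.} Completed residue fields of points on a perfectoid space are perfectoid fields. Concretely, $A_\infty^+\to\mathcal H(x_\infty)^\circ$ has dense image, and Frobenius surjectivity modulo $p$ passes to the completed valuation ring. This rests on \cite[Lemma~6.? / Proposition~6.?]{Scholze2012}, which I would cite in the standard form: a rational localization of an affinoid perfectoid is perfectoid, and residue fields are the completed colimit over rational neighbourhoods. The one subtle point is that a perfectoid field must be nondiscretely valued. This holds automatically because $\widehat L\supseteq\Cp$.

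\textbf{\'Etale localization.} Let $U_{J_1}\to X_{J_1}$ be \'etale with $J_1\subseteq J_0$, and pull back the tail tower. Locally, $U_{J_1}$ is a composite of rational localizations and finite \'etale maps over affinoids in $V_{J_1}$. I would handle the two cases separately. For rational subsets, use \cite[Lemma~4.5(i)]{Scholze2013}: the tower over a rational subdomain has completion equal to the rational localization of $\widetilde V$, which is affinoid perfectoid. For finite \'etale $W\to V_{J_1}$, the base-changed tower has completed colimit $A_\infty\otimes_{A_{J_1}}\OO(W)$. This is finite \'etale over $A_\infty$, hence perfectoid by the almost purity theorem \cite{Scholze2012}. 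One must also check that the uniform completion coincides with this tensor product. That follows as in Lemma~\ref{lem:integral-uniform-comparison}, since finite \'etale algebras over a uniform Banach algebra carry the spectral norm equivalent to the module norm.

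The main obstacle I expect is this last norm comparison, in particular for general \'etale maps that are not globally a composite of the two kinds. I would handle it by covering $U_{J_1}$ with affinoids of the standard local form and invoking Definition~\ref{def:four-perfectoid-notions}(3), which only requires an affinoid covering.
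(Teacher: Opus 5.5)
Your proposal is correct and follows the paper's route: you extend the compatible branch seminorms, which are bounded by $\rho$, to the spectral completion $A_\infty$; you identify $\mathcal H(x_\infty)$ with $\widehat L$ by density of $\varinjlim_J A_J$ and continuity of division; you deduce perfectoidness from the standard fact that completed residue fields of perfectoid spaces are perfectoid (the paper cites \cite[Corollary~6.7(ii)]{Scholze2012}); and your rational/finite \'etale treatment of localization unpacks \cite[Lemma~4.5(i)]{Scholze2013}, which the paper cites directly. One aside is wrong: $A_\infty^+\to\mathcal H(x_\infty)^\circ$ need not have dense image (at the Gauss point of a perfectoid disc, $T^{-1}$ has norm $1$, but its residue class is not in the image of $A_\infty^+$), so the Frobenius argument has to go through rational neighbourhoods of $x_\infty$, i.e.\ through the cited corollary, as you then indicate.
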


\begin{proof}
Choose a finite-level affinoid neighborhood as in
\eqref{eq:perfectoid-completion}. By
Lemma~\ref{lem:integral-uniform-comparison}, $A_\infty$ is the
spectral completion of $\varinjlim_J A_J$.
The branch seminorms, bounded by the spectral norm, extend to
$x_\infty$. Each $\operatorname{Frac}(A_J/\ker x_J)$ is dense in
$\mathcal H(x_J)$; density in $A_\infty$ and continuity of division
show that their union is dense in $\mathcal H(x_\infty)$.
This gives the equality, and \cite[Corollary 6.7(ii)]{Scholze2012}
gives perfectoidness. The localization assertion is
\cite[Lemma 4.5(i)]{Scholze2013}.
\end{proof}

\begin{lemma}\label{lem:faithful-differential}
There is a continuous $\rho:G\to\GL_r(\Qp)$ with injective differential.
\end{lemma}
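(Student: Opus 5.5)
We would argue with Ado's theorem on the Lie algebra side and Lazard's theory of uniform pro-$p$ groups on the group side. Since $\mathfrak g=\Lie G$ is a finite-dimensional Lie algebra over $\Qp$, Ado's theorem gives a faithful finite-dimensional representation $\phi:\mathfrak g\to\mathfrak{gl}_r(\Qp)$. The task is to integrate $\phi$ to a continuous homomorphism on an open subgroup of $G$, and then pass from that subgroup back to $G$ itself.

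\emph{Integrating on an open subgroup.} By Lazard, $G$ contains an open uniform pro-$p$ subgroup $H$. On a sufficiently small such $H$, the logarithm identifies $H$ with a $\Zp$-lattice $\mathcal L\subset\mathfrak g$, and multiplication is given by the Baker--Campbell--Hausdorff series, which converges on $\mathcal L$. Shrinking $H$ further (replacing $\mathcal L$ by $p^k\mathcal L$), we may assume $\phi(\mathcal L)\subseteq p^{c}M_r(\Zp)$ with $c\ge2$, so that the matrix exponential converges on $\phi(\mathcal L)$. We then set
\[
 \rho_H(\exp X)=\exp\bigl(\phi(X)\bigr),\qquad X\in\mathcal L.
\]
Because BCH is given by universal Lie series and $\phi$ is a Lie algebra homomorphism, $\rho_H$ is a homomorphism. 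It is analytic, hence continuous, and $d\rho_H=\phi$ is injective.

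\emph{Passing to $G$.} We take $\rho=\operatorname{Ind}_H^G\rho_H$, which is a continuous representation of $G$ on $\Qp^{r[G:H]}$. Restricted to $H$, this representation contains $\rho_H$ as a direct summand, namely the summand corresponding to the trivial coset. Since the differential of a representation is determined by its restriction to any open subgroup, $d\rho$ restricted to this summand equals $\phi$. As $\phi$ is injective, so is $d\rho$.

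\emph{Expected obstacle.} The delicate step is the integration: one has to choose $H$ small enough that both the BCH series and the matrix exponential converge and respect the lattice. This is standard in Lazard's theory (equivalently, the equivalence between Lie algebras and germs of analytic groups), and in a write-up we would simply cite it. Alternatively, one can bypass Ado entirely by noting that any compact $p$-adic Lie group is locally linear: an open subgroup embeds as a closed subgroup of $\GL_r(\Zp)$ (Lazard; cf.\ Dixon--du Sautoy--Mann--Segal). That embedding already has injective differential, and the same induction step then yields $\rho$ on $G$.
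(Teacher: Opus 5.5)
Your proposal is correct and follows the paper's own argument: Ado's theorem, integration of the faithful Lie algebra representation on a sufficiently small open subgroup, and finite induction to $G$, whose restriction to that subgroup contains the original representation and therefore has injective differential. The details you add (convergence of the BCH series and the matrix exponential, and the trivial-double-coset summand in the restriction of the induced representation) are the points the paper leaves implicit.
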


\begin{proof}
Integrate a faithful representation of $\mathfrak g$ supplied by Ado's
theorem on a sufficiently small open subgroup $G_0$. Its finite-dimensional induction to $G$ restricts
to a representation containing the original one, so has faithful differential.
\end{proof}

\begin{lemma}\label{lem:analytic-rank-locus}
The nonsurjectivity locus of $\theta_{\widetilde X}$ is closed analytic.
If it contains a type $2$ point $x$, then $\theta_{\widetilde X}$ vanishes
on a neighborhood of $x$.
\end{lemma}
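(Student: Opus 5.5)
We would work locally and turn the Sen map into a matrix of analytic functions on $X$. Since $X$ is a curve, $\Omega^1_{X/\Cp}$ is locally free of rank one, so on a small affinoid $U=\Spa(A,A^\circ)\subseteq X$ with coordinate differential $dt$ we would write
\[
 \theta_{\widetilde X}(\lambda)=s(\lambda)\,dt(-1).
\]
After passing to a finite level $X_J$ over which $\mathfrak g_\pi^\vee$ is trivialized, and after choosing a basis $\lambda_1,\ldots,\lambda_m$ of $\mathfrak g^\vee$, this gives sections $s_i\in\Oh_X(U_J)$. The first task is to show that the $s_i$ actually come from $\OO$ rather than only from $\Oh$.

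For this we would use the local description recalled after Theorem~\ref{thm:geometric-sen}. On the toric tower, locally analytic functions on $G$ (or on the representation $V$ of Lemma~\ref{lem:faithful-differential}) decomplete to finite projective modules $P_n$ over $A_n$. The torsor Sen map is then obtained from the operators $\nabla_j\in\End_{A_n}(P_n)$ together with the toric forms $\omega_j$. On a curve there is only one $\omega$, so the coefficients $s_i$ lie in some finite-level $A_n$ that is finite \'etale over $A_J$. Galois invariance of $\theta_{\widetilde X}$ under the toric group (the map is defined on $X_{\proet}$) then descends the coefficients to $A_J$, and by Galois descent along the finite \'etale cover $X_J\to X$ the ideal $I=(s_1,\ldots,s_m)$ descends to a coherent ideal $\mathcal I\subseteq\OO_X$. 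Surjectivity at a point $x$ means that some $s_i$ does not vanish at $x$, that is, the fibre of $\OO_X/\mathcal I$ at $x$ is zero. Hence the nonsurjectivity locus is $V(\mathcal I)$, which is a closed analytic subset. The delicate point in this step is checking that the Sen coefficients genuinely lie in the finite-level rings and are independent of the choice of lattice; we would rely on the chart independence in \cite[Proposition~3.2.3]{RodriguezCamargo2026}.

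The second assertion uses that $X$ is a smooth connected curve, so every closed analytic subset of a connected affinoid neighbourhood is either the whole neighbourhood or a finite set of classical points. The underlying reason is that $A$ is a one-dimensional regular domain on connected pieces, and a nonzero ideal defines a zero-dimensional subscheme. A type~2 point is not classical. So if $x\in V(\mathcal I)$ is of type~2, then on a connected affinoid neighbourhood $W$ of $x$ the set $V(\mathcal I)\cap W$ is not a finite set of rigid points. It must therefore be all of $W$, which means $\mathcal I|_W=0$, that is, every $s_i$ vanishes on $W$. Since $\theta_{\widetilde X}$ is determined by the $s_i$ in the chosen trivialization, this says that $\theta_{\widetilde X}|_W=0$.
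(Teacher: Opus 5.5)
\textbf{Verdict: there is a genuine gap in your first step.} Your second step matches the paper's argument: a closed analytic subset of a smooth connected affinoid curve that contains a non-classical point is the whole neighbourhood, and reducedness then kills the section.

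The gap is the passage to a finite level $X_J$ ``over which $\mathfrak g_\pi^\vee$ is trivialized''. You use it to write $\theta_{\widetilde X}(\lambda_i)=s_i\,dt(-1)$ with scalar coefficients at that level. But $\mathfrak g_\pi^\vee$ is the local system of the coadjoint representation. Over $X_J$ it is the local system attached to $\mathfrak g^\vee|_J$ via the $J$-torsor $\widetilde X\to X_J$, so it is trivial only if the monodromy acts on $\mathfrak g^\vee$ through a finite group. For a torsor with open monodromy this forces $\mathrm{ad}=0$, i.e.\ $\mathfrak g$ abelian.

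Theorem~\ref{thm:curve} is stated for arbitrary compact $p$-adic Lie groups. In the non-abelian case $\mathfrak g_\pi^\vee$ is trivialized only on the pro-\'etale cover $\widetilde X$ itself, so your $s_i$ live only in the perfectoid ring of $\widetilde X_U$. Your claim that they lie in a toric level $A_n$ does not repair this. Extracting them from $\nabla\in\End_{A_n}(P_n)$ means writing $\nabla=\sum_i s_i\,d\rho(\xi_i)$ with a basis $\xi_i$ of $\mathfrak g$ available at finite level, which is the same missing trivialization. Consequently the Galois descent of $(s_1,\ldots,s_m)$ along a finite \'etale cover has nothing to act on. When $\mathfrak g$ is abelian your argument does work, and Sen theory is not even needed there: the pro-\'etale sections of $\Oh_X\otimes\Omega^1_{X/\Cp}(-1)$ over an affinoid $U_J$ \'etale over $X$ are already $\Omega^1_{X/\Cp}(U_J)(-1)$.

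The missing idea is to drop scalar coefficients relative to a basis of $\mathfrak g^\vee$ and instead decomplete the single section
$s=\theta_{\widetilde X}^\vee(e)\in(\Oh_X\otimes_{\Qp}\mathfrak g_\pi)(W)$, where $e$ generates $\Omega^{1,\vee}_{W/\Cp}(1)$. The paper proceeds as follows:
\begin{enumerate}
\item After \'etale localization, make a $G$-stable lattice of $\mathfrak g_\pi$ trivial modulo $p^m$ and choose a small toric chart.
\item Relative Sen descent then gives finite projective $A_n$-modules $P_n$ with $P^{\Gamma\text{-}\mathrm{la}}=\varinjlim_n P_n$, where $P=(\Oh_X\otimes\mathfrak g_\pi)(W_\infty^{\mathrm{tor}})$.
\item Since $s$ comes from $W$, it is $\Gamma$-invariant, hence locally analytic, hence lies in some $P_n$.
\item Its zero locus $Z_n\subset W_n$ is Zariski closed and detects nonsurjectivity, because $\Omega^1_W$ has rank one.
\item The bad locus is the image of $Z_n$ under the finite map $W_n\to W$, hence closed analytic.
\end{enumerate}
Your instinct to use the decompleted modules $P_n$ is the right one. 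It should be applied to the section $s$ of the decompleted adjoint module, not to scalar coefficients in $A_n$.
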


\begin{proof}
\'Etale locally on $X$, we can trivialize a stable adjoint lattice of $\mathfrak g_\pi$ modulo $p^m$ for  $m\gg0$ and take connected affinoid $W$ with a small toric
chart (a composite of rational localizations and finite \'etale maps).
Write $W_\infty^{\mathrm{tor}}\to W$ for its toric $\Gamma\simeq\Zp$-tower,
with finite-level rings $A_n$ and completion $A_\infty^{\mathrm{tor}}$.
For $P=(\Oh_X\otimes_{\Qp}\mathfrak g_\pi)(W_\infty^{\mathrm{tor}})$,
relative Sen descent gives finite projective $A_n$-modules $P_n$ with
\[
 A_\infty^{\mathrm{tor}}\otimes_{A_n}P_n\simeq P,
 \qquad P^{\Gamma\text{-}\mathrm{la}}=\varinjlim_nP_n
\]
by \cite[Proposition 2.2.14 and Theorem 2.4.4(1),(3)]{RodriguezCamargo2026}.
For a generator $e$ of $\Omega^{1,\vee}_{W/\Cp}(1)$, the
$\Gamma$-invariant section $s=\theta_{\widetilde X}^\vee(e)$ belongs to
some $P_n$. Its zero locus $Z_n\subset W_n$ detects nonsurjectivity,
since $\Omega^1_W$ has rank one. 
By $\Gamma$-invariance, the bad locus is therefore the finite image of $Z_n$, hence closed
analytic; the same argument descends the initial cover, and these loci glue.

A proper closed analytic subset of a smooth connected affinoid curve
contains only classical points. A type $2$ bad point therefore has an
entire connected neighborhood in the bad locus. On its finite-level
preimage $s$ vanishes fiberwise, hence vanishes since $W_n$ is reduced.
Thus $\theta_{\widetilde X}=0$ there.
\end{proof}

\begin{lemma}\label{lem:zero-sen-period}
Suppose $\theta_{\widetilde X}=0$ on a nonempty analytic open $W$, and let
$\rho:G\to\GL_r(\Qp)$ be continuous. After \'etale localization and
shrinking, every type $2$ point $x\in W$ and branch above it admits
\begin{equation}\label{eq:geometric-period}
 M\in\GL_r(\widehat L),\qquad
 \sigma(M)=M\rho(\sigma)\quad(\sigma\in D),
\end{equation}
where $K=\mathcal H(x)$, $L$ is the branch extension and
$D=\Gal(L/K)\subseteq G$.
\end{lemma}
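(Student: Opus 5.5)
Consider the $\Oh_X$-module $\mathcal V=\Oh_X\otimes_{\Qp}V_\pi$ attached to $\rho$, with $V=\Qp^r$. By Theorem~\ref{thm:geometric-sen},
\[
 \theta_{\mathcal V}=(\mathrm{id}_{\mathcal V}\otimes\theta_{\widetilde X})\circ d\rho .
\]
Hence $\theta_{\widetilde X}=0$ on $W$ forces $\theta_{\mathcal V}=0$ on $W_{\proet}$. The plan is to show that a Higgs-trivial bundle of this form is locally trivialized by its Sen-invariant sections over the toric tower, and then to specialize along the chosen branch at $x$.

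\textbf{Step 1 (localize).} Shrink $W$ to a connected small affinoid with a toric chart, around the given type~2 point $x$. After passing to a finite level and an \'etale localization (allowed by Lemma~\ref{lem:branch-completion}), assume the tower over $W$ is affinoid perfectoid. Also assume $\rho$ restricted to the relevant open subgroup is trivial modulo $p^m$ with $m$ large, so that $V_\pi$ carries a stable lattice.

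\textbf{Step 2 (toric descent).} Run relative Sen descent as in the proof of Lemma~\ref{lem:analytic-rank-locus}. This gives a finite projective $A_n$-module $P_n$ with $A^{\mathrm{tor}}_\infty\otimes_{A_n}P_n\simeq\mathcal V(W_\infty^{\mathrm{tor}})$. The operators $\nabla_j$ on $P_n$ compute $\theta_{\mathcal V}$, so they vanish. Then a small open subgroup of $\Gamma\simeq\Zp$ acts on $P_n$ through the exponential of $0$, i.e. trivially. Enlarging $n$ and shrinking $W$ to make $P_n$ free, we obtain a basis of $\mathcal V$ over the toric tower that is fixed by an open subgroup of $\Gamma$. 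Descend along the finite quotient by finite \'etale Galois descent and shrink again. The result is a basis $e_1,\ldots,e_r$ of $\mathcal V(W')$, for a finite \'etale $W'\to W$, consisting of sections of $\Oh_X\otimes V_\pi$ over $W'$ itself. These are elements of $\Oh(\widetilde W')\otimes\Qp^r$ which are invariant under the diagonal action of $G$ (acting via $\rho$ on $\Qp^r$) and form a basis.

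\textbf{Step 3 (specialize).} Choose the branch $x_\infty$ over a preimage of $x$ in $W'$. After a finite extension of $K$, which we absorb into the localization since type~2 points lift, we may assume the residue extension is trivial. Evaluate the invariant basis at $x_\infty$, using $\mathcal H(x_\infty)=\widehat L$ from Lemma~\ref{lem:branch-completion}. Writing the basis as the columns of a matrix $N\in\GL_r(\widehat L)$, the invariance reads $\sigma(N)=\rho(\sigma)^{-1}N$ or its transpose variant, for $\sigma$ in the decomposition group $D$. Setting $M=N^{-1}$, or $M=N^{T}$ depending on conventions, gives \eqref{eq:geometric-period}. The determinant is nonzero because the evaluations of a basis of a free module stay a basis.

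\textbf{Main obstacle.} The expected difficulty is Step~2: ensuring that Higgs vanishing really gives a trivialization, and not merely a connection-free descended module. Vanishing of $\nabla_j$ only makes the $\Gamma$-action on $P_n$ locally trivial after shrinking the subgroup, so the shrinking must be uniform on $W$. The approach is to use the $\Gamma$-analytic vectors of the lattice-trivialized module and the convergence of the exponential on a sufficiently small subgroup. One must also check that the diagonal $G$-invariance transfers from the toric tower to the $G$-tower $\widetilde X$ via the comparison $\widehat{\mathcal O}$-sections. This transfer should follow from v-descent \cite[Theorem~17.1.3]{ScholzeWeinstein2020}, since both towers compute $\mathcal V$ as a sheaf on $W_{\proet}$.
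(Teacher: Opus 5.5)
Your proposal is correct and is essentially the paper's argument: Theorem~\ref{thm:geometric-sen} forces $\theta_{\mathcal V}=0$; zero-Sen descent then produces an analytic bundle trivializing $\Oh\otimes_{\Qp}V_\pi$ near $x$ (the paper simply cites this as \cite[Propositions~3.2.3(3) and~3.2.4]{RodriguezCamargo2026}, while your Step~2 re-derives it from analyticity of the $\Gamma$-action on $P_n$); finally a basis is evaluated at the point. The only difference is that you evaluate on the perfectoid tower at the branch point $x_\infty$, obtaining entries in $\mathcal H(x_\infty)=\widehat L$ directly from Lemma~\ref{lem:branch-completion}, whereas the paper evaluates a $K$-basis of the fibre $\mathcal E_x$ over $C_x=\widehat{\overline K}$ and then uses Ax--Sen--Tate to place $M$ in $\GL_r(\widehat L)$; your remark about making the residue extension trivial is unnecessary (and not achievable in general), since the statement already allows replacing $x$ by a point of the \'etale cover.
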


\begin{proof}
The associated local system $V_\pi$ has zero Sen map.
After \'etale localization and
shrinking, zero-Sen descent
\cite[Propositions 3.2.3(3) and 3.2.4]{RodriguezCamargo2026} gives
an analytic vector bundle $\mathcal E$ with
\[
 \Oh_W\otimes_{\OO_W}\mathcal E
 \simeq\Oh_W\otimes_{\Qp}V_\pi.
\]
At $C_x=\widehat{\overline K}$ a $K$-basis of $\mathcal E_x$ gives a
matrix $P$ satisfying $\rho(\sigma)\sigma(P)=P$. Its inverse $M$
satisfies \eqref{eq:geometric-period}; its entries are fixed by
$\ker(G_K\to D)$ and hence lie in $\widehat L$ by Ax--Sen--Tate
\cite[Proposition 3.8]{FontaineOuyang}.
\end{proof}

\begin{proposition}\label{prop:type2-reduction}
Failure of Theorem~\ref{thm:curve} at a type $2$ point gives a type $2$
field $K/\Cp$, a Galois extension $L/K$ with compact $p$-adic Lie group
$D$, and $\rho:D\to\GL_r(\Qp)$ with injective differential such that
$\widehat L$ is perfectoid and \eqref{eq:geometric-period} holds.
\end{proposition}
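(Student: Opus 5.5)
We chain the preceding lemmas. Suppose Theorem~\ref{thm:curve} fails at a type~2 point $x\in X$, i.e.\ $\theta_{\widetilde X}$ is not surjective at $x$. By Lemma~\ref{lem:analytic-rank-locus}, the nonsurjectivity locus is closed analytic, and since it contains the type~2 point $x$, the map $\theta_{\widetilde X}$ vanishes on an open neighbourhood $W$ of $x$. We may shrink $W$ to a connected affinoid on which some finite-level tail of the tower is affinoid perfectoid. By Lemma~\ref{lem:branch-completion}, perfectoidness survives \'etale localization at a finite level, so replacing $X$ by such a localization is harmless.

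Fix $\rho:G\to\GL_r(\Qp)$ with injective differential, as supplied by Lemma~\ref{lem:faithful-differential}. Apply Lemma~\ref{lem:zero-sen-period} to $W$ and $\rho$. After the \'etale localization and shrinking allowed there, choose a type~2 point $x'$ of the new $W$. Such a point exists because a nonempty open of a curve contains type~2 points, and its image can be arranged near $x$; any type~2 point of $W$ suffices. Choose a compatible branch above $x'$. Set $K=\mathcal H(x')$. This is a type~2 field, since \'etale maps of smooth curves send type~2 points to type~2 points and preserve the completed residue field up to a finite extension, which is again a type~2 field. Let $L=\bigcup_J\mathcal H(x'_J)$ and let $D\subseteq G$ be the decomposition group, i.e.\ the stabilizer of the branch.

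We now check the remaining properties of $L/K$ and $D$.
\begin{enumerate}
\item \emph{Galois with group $D$.} Each $\mathcal H(x'_J)/K$ is Galois with group the stabilizer of $x'_J$ in $G/J$, because the fibres of the finite \'etale $G/J$-torsor above $x'$ are permuted transitively. Passing to the limit, $L/K$ is Galois with group $D$.
\item \emph{$D$ is a compact $p$-adic Lie group.} It is a closed subgroup of $G$.
\item \emph{Perfectoidness.} The field $\widehat L$ is perfectoid by Lemma~\ref{lem:branch-completion}.
\item \emph{Period matrix.} Lemma~\ref{lem:zero-sen-period} yields $M\in\GL_r(\widehat L)$ with $\sigma(M)=M\rho(\sigma)$ for all $\sigma\in D$.
\item \emph{Injective differential of $\rho|_D$.} We have $\Lie D\subseteq\Lie G$, so the restriction $d\rho|_{\Lie D}$ is still injective.
\end{enumerate}

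The main obstacle is the bookkeeping across localizations. One must make sure that the \'etale localization and shrinking in Lemma~\ref{lem:zero-sen-period} keep a type~2 point together with a perfectoid branch over it, and that $\mathcal H$ of a type~2 point after an \'etale map is still a type~2 field over $\Cp$. I would handle this by performing all localizations at a finite level $X_J$ of the tower. Viewing $X_J\to X$ as the new base with group $J$, the pulled-back tower is again a perfectoid torsor, and we then take $D$ inside $J$.
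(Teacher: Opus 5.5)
Your proposal is correct and follows the paper's proof. Like the paper, you chain Lemmas~\ref{lem:analytic-rank-locus}, \ref{lem:faithful-differential}, \ref{lem:zero-sen-period} and \ref{lem:branch-completion}, using that \'etale localization keeps you at a type~2 point and that $d\rho|_{\Lie D}$ stays injective; your extra steps (working at a finite level $X_J$ where the tail is affinoid perfectoid, and identifying $D$ with the branch stabilizer) only make explicit what the paper leaves implicit.
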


\begin{proof}
Apply Lemmas~\ref{lem:analytic-rank-locus}, \ref{lem:faithful-differential}
and \ref{lem:zero-sen-period}. Finite \'etale extension preserves type $2$,
$d\rho|_{\Lie D}$ remains injective, and
Lemma~\ref{lem:branch-completion} gives perfectoidness of $\widehat L$.
\end{proof}

\subsection{Wewers's ramification theorem}\label{sec:wewers}

Let $F$ be a complete discretely valued field of mixed
characteristic $(0,p)$, normalized by $v(p)=1$. Put
$\kappa=\kappa(F)$ and assume
\begin{equation}\label{eq:wewers-residue-hypotheses}
 [\kappa:\kappa^p]=p,\qquad
 H^1(\kappa,\mathbf F_p)\ne0.
\end{equation}
Let $k$ be the relative algebraic closure of
$\operatorname{Frac}W(\bigcap_{j\ge0}\kappa^{p^j})$ in $F$, called the constant field of $F$.
Assume also that $F/k$ is weakly unramified, meaning that
$v(k^\times)=v(F^\times)$, and that $\mu_p\subset F$.
These are the hypotheses of \cite[\S2]{Wewers2014}.

\begin{definition}\label{def:fierce}
A finite extension $E/F$ of complete valued fields is fierce if
$\kappa(E)/\kappa(F)$ is purely inseparable and
$[E:F]=[\kappa(E):\kappa(F)]$.
\end{definition}

For a finite fierce Galois extension $E/F$ with Galois group $Q$ and $\sigma\in Q$, set
\begin{equation}\label{eq:ramification-numbers}
 i_Q(\sigma)=\inf_{z\in E^\times}
       \bigl(v(\sigma z-z)-v(z)\bigr),\qquad i_Q(1)=+\infty.
\end{equation}
Since $v(E^\times)=v(F^\times)$, this equals
$\min_{z\in\OO_E}v(\sigma z-z)$, the definition in
\cite[\S3.1]{Wewers2014}. Write
\begin{align}
 Q_t&=\{\sigma:i_Q(\sigma)\ge t\},&
 \Phi_Q(t)&=\int_0^t|Q_s|\,ds
           =\sum_{\sigma\in Q}\min\{i_Q(\sigma),t\},
 \label{eq:herbrand}\\
 \Psi_Q&=\Phi_Q^{-1},& Q[a]&=Q_{\Psi_Q(a)}.
 \label{eq:upper-filtration}
\end{align}
For $Q\ne1$ put
\[
 u(Q)=\max_{\sigma\ne1}i_Q(\sigma),\qquad
 b(Q)=\Phi_Q(u(Q)),
\]
and put $u(1)=b(1)=0$. Then
\begin{equation}\label{eq:herbrand-tail}
 u(Q)=\int_0^{b(Q)}\frac{da}{|Q[a]|},\qquad
 \Psi_Q(a)=u(Q)+a-b(Q)\quad(a\ge b(Q)).
\end{equation}
The group is retained at its break. For a nonzero character
$\eta:Q\to\Qp/\Zp$, define
\begin{equation}\label{eq:character-conductor}
 c(\eta)=\max\{a:\eta|_{Q[a]}\ne0\}=b(Q/\ker\eta),
 \qquad c(0)=0.
\end{equation}
The last equality follows from compatibility of upper numbering
with quotients \cite[Remark 3.1 and Definition 3.3]{Wewers2014}.
The notation in \eqref{eq:ramification-numbers}--
\eqref{eq:character-conductor} will also be used for the extensions
of complete rank-one fields below.

\begin{lemma}\label{lem:cyclic-breaks}
Suppose $A=\Gal(E/F)\simeq\Z/p^n\Z$, and choose a faithful
character $\chi:A\hookrightarrow\Qp/\Zp$. In additive notation set
\[
 \chi_i=p^{n-i}\chi,\qquad E_i=E^{p^iA},\qquad
 \delta_i=c(\chi_i)\quad(1\le i\le n),\qquad\delta_0=0.
\]
Then $[E_i:F]=p^i$ and
\begin{equation}\label{eq:delta-quotient}
 \delta_i=b(E_i/F)=b(A/p^iA),\qquad
 0<\delta_1<\cdots<\delta_n.
\end{equation}
Moreover,
\begin{equation}\label{eq:cyclic-intervals}
 A[a]=p^{i-1}A\quad(\delta_{i-1}<a\le\delta_i),
 \qquad A[a]=0\quad(a>\delta_n).
\end{equation}
\end{lemma}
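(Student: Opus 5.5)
The plan is to prove the lemma with the formal calculus of upper numbering from \eqref{eq:ramification-numbers}--\eqref{eq:character-conductor}, using only the order structure of the subgroups of a cyclic $p$-group. No input from Wewers beyond the compatibility of upper numbering with quotients is needed.

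First, the degrees. Since $p^iA$ has index $p^i$ in $A\simeq\Z/p^n\Z$, Galois theory gives $[E_i:F]=|A/p^iA|=p^i$. The character $\chi_i=p^{n-i}\chi$ has kernel exactly $p^iA$. Indeed, $\chi$ is faithful with image $p^{-n}\Z_p/\Z_p$, so $p^{n-i}\chi(\sigma)=0$ iff $\chi(\sigma)\in p^{-(n-i)}\Z_p/\Z_p$, iff $\sigma\in p^iA$. Hence \eqref{eq:character-conductor} gives $\delta_i=c(\chi_i)=b(A/p^iA)=b(E_i/F)$, which is the first part of \eqref{eq:delta-quotient}.

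Next, the shape of the filtration. The upper filtration $a\mapsto A[a]$ is a decreasing family of subgroups, hence of the form $p^jA$. By the definition of $c$, for $a\le c(\chi_i)$ close to $c(\chi_i)$ we have $\chi_i|_{A[a]}\ne0$, so $A[a]\not\subseteq p^iA$. For $a>c(\chi_i)$ we have $A[a]\subseteq p^iA$.
\begin{itemize}
\item The maximum in \eqref{eq:character-conductor} is attained ("the group is retained at its break", since $Q_t$ is defined with $\ge$ and $\Psi_Q$ is continuous increasing). So the sets $\{a: A[a]\not\subseteq p^iA\}$ are intervals $[0,\delta_i]$, or $(-\infty,\delta_i]$ with the convention $A[0]=A$.
\item It follows that $A[a]=p^{i-1}A$ exactly when $a\le\delta_i$ and $a>\delta_{i-1}$, because then $A[a]\not\subseteq p^iA$ but $A[a]\subseteq p^{i-1}A$. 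This gives \eqref{eq:cyclic-intervals}, including $A[a]=0$ for $a>\delta_n$.
\item Monotonicity $\delta_1\le\cdots\le\delta_n$ follows since $\ker\chi_{i}\supseteq\ker\chi_{i+1}$.
\end{itemize}

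The main obstacle is \emph{strictness} of the inequalities, together with $\delta_1>0$. For positivity, $E_1/F$ is a nontrivial fierce extension, so $i(\sigma)<\infty$ for $\sigma\ne1$. Also $i(\sigma)>0$, since $\sigma$ acts trivially on the purely inseparable residue extension and preserves valuations, so $v(\sigma z-z)>v(z)$. Hence $u>0$ and $b=\Phi(u)\ge u>0$.

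For strictness, suppose $\delta_{i-1}=\delta_i$. Then the interval on which $A[a]=p^{i-1}A$ is empty, so the filtration jumps directly from $\supseteq p^{i-2}A$ to $\subseteq p^iA$. I would rule this out via the lower numbering, which is what the upper numbering is a reparametrization of: $A_t=\{\sigma: i(\sigma)\ge t\}$ is a decreasing family of subgroups indexed by all values $i(\sigma)$. The generators of $p^{i-1}A\setminus p^iA$ have a common value $t_0=i(\sigma)$. To see this, note that $i$ is constant on generators of a cyclic subgroup, because $A_t$ is a subgroup, so $\sigma\in A_t$ iff $\sigma^k\in A_t$ for $k$ prime to $p$. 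Then $A_{t_0}=p^{i-1}A$ exactly, since the larger subgroup $p^{i-2}A$ has generators with strictly smaller $i$, by the same argument applied at the next level. So $p^{i-1}A$ occurs as a lower filtration group on a nonempty lower interval, and $\Phi$ maps it to a nondegenerate upper interval because $\Phi$ is strictly increasing with slope $|A_s|>0$. This gives $\delta_{i-1}<\delta_i$.

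The subtle point to double-check is exactly this distinctness of the $i$-values at successive layers. Two layers with equal $i$ would make $A_t$ jump by more than a factor $p$. This is allowed a priori, and then strictness would fail. If that step does not go through directly, I would instead invoke Wewers's result that the quotient $A/p^{i}A\to A/p^{i-1}A$ is a nontrivial fierce $\Z/p$-subextension with positive conductor, together with the Herbrand identity \eqref{eq:herbrand-tail} applied to $Q=A/p^iA$ and to $Q/(p^{i-1}A/p^iA)$. That combination gives $b(A/p^iA)>b(A/p^{i-1}A)$, using $u>0$ for the nontrivial kernel.
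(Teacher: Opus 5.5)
\textbf{There is a genuine gap, at the strictness step you flagged yourself.} Everything before it is fine:
\begin{itemize}
\item the degree count;
\item $\delta_i=b(E_i/F)=b(A/p^iA)$ via $\ker\chi_i=p^iA$;
\item the description of $\{a:A[a]\not\subseteq p^iA\}$ as an interval ending at $\delta_i$, which yields \eqref{eq:cyclic-intervals} once strictness is known;
\item $\delta_1\ge u(A/pA)>0$.
\end{itemize}

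The subgroup property of $A_t$ shows two things: $i$ is constant on the generators of each $p^jA$, and $i(\sigma^p)\ge i(\sigma)$. Applying ``the same argument at the next level'' gives nothing more. It does not show that generators of $p^{i-2}A$ have strictly smaller $i$ than those of $p^{i-1}A$.

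Your fallback does not supply this either. A positive conductor for the degree-$p$ layer only says $i(\tau)>0$ for $\tau$ generating $p^{i-1}A$. The Herbrand identities \eqref{eq:herbrand-tail} are purely formal. They are consistent with a last lower group of order $p^2$ in $Q=A/p^iA$. In that case $Q[b(Q)]\supsetneq p^{i-1}A/p^iA$, hence $b(A/p^{i-1}A)=b(A/p^iA)$. The paper settles exactly this point by citing \cite[Corollary~3.6 and Remark~4.2]{Wewers2014}.

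\textbf{One elementary estimate closes the gap in your framework.} By \eqref{eq:ramification-numbers}, $p^{-i(\sigma)}$ is exactly the operator norm of $\Delta=\sigma-1$ on $E$. It is $<1$ for $\sigma\ne1$; this is your positivity argument, made uniform by expanding in an $\OO_F$-basis of $\OO_E$ consisting of units. Write $\sigma^p-1=\sum_{k=1}^{p}\binom pk\Delta^k$ and use $v\bigl(\binom pk\bigr)=1$ for $0<k<p$. This gives
\[
 i(\sigma^p)\;\ge\;\min\{i(\sigma)+1,\ p\,i(\sigma)\}\;>\;i(\sigma)\qquad(\sigma\ne1),
\]
which is the same estimate as \eqref{eq:field-operator-power} in Lemma~\ref{lem:field-last-lower}.

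Let $t_j$ be the common value of $i$ on generators of $p^{j-1}A$. Then $0<t_1<\cdots<t_n$. Directly from the definition of $c$,
\[
 \delta_i=\max\{a:\Psi_A(a)\le t_i\}=\Phi_A(t_i).
\]
Strict monotonicity of $\Phi_A$ together with $\Phi_A(t)\ge t$ then gives $0<\delta_1<\cdots<\delta_n$.

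With this repair your route is more self-contained than the paper's. Strictness, and $|A[\delta_i]|=p^{n-i+1}$, follow from the definition of $i_Q$ alone. Wewers is then needed only for the quotient compatibility behind $\delta_i=b(A/p^iA)$.
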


\begin{proof}
The kernel of $\chi_i$ is $p^iA$, which gives the degree and the
first equality by \eqref{eq:character-conductor}. The strict
inequalities and $|A[\delta_i]|=p^{n-i+1}$ are
\cite[Corollary 3.6 and Remark 4.2]{Wewers2014}.
Uniqueness of subgroups of each order in a cyclic group gives
\eqref{eq:cyclic-intervals}.
\end{proof}

Put $h=1/(p-1)$ and $H=h+1=p/(p-1)$.

\begin{theorem}[Wewers]\label{thm:wewers}
Under the hypotheses above, the breaks in
Lemma~\ref{lem:cyclic-breaks} satisfy $0<\delta_1\le H$ and, for
$i>1$,
\begin{equation}\label{eq:wewers-breaks}
 \begin{cases}
 p\delta_{i-1}\le\delta_i\le H,&\delta_{i-1}\le h,\\
 \delta_i=\delta_{i-1}+1,&\delta_{i-1}>h.
 \end{cases}
\end{equation}
\end{theorem}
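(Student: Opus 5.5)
The plan is to derive the statement from Wewers's main results in \cite{Wewers2014} rather than reprove them. The work is in checking that the normalizations of \eqref{eq:ramification-numbers}--\eqref{eq:character-conductor} match his. Our hypotheses \eqref{eq:wewers-residue-hypotheses}, $\mu_p\subset F$ and weak unramifiedness of $F/k$ are exactly those of \cite[\S2]{Wewers2014}. Since $v(E^\times)=v(F^\times)$ for fierce extensions, $i_Q$ coincides with his lower ramification function, and $\Phi_Q,\Psi_Q$ are his Herbrand functions with the same integral normalization. Consequently $c(\chi_i)$ is his Swan-type conductor of the character $\chi_i$, and $\delta_i=b(E_i/F)$ by Lemma~\ref{lem:cyclic-breaks}.

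If a self-contained argument is wanted, I would proceed by induction on $i$ via Kummer theory, using $\mu_p\subset F$.

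\emph{Degree $p$ case.} Write $E_1=F(y^{1/p})$ with $y\in\OO_F^\times$. Normalize $y$ so that either $\bar y\notin\kappa^p$, or $y=1+x$ with $v(x)$ maximal modulo $p$-th powers. The classical computation for $(1+z)^p=1+z^p+pz(\cdots)$ then gives $\delta_1=v(x)$-dependent values. These values never exceed $H=p/(p-1)$; the extreme case is $v(x)=H$, where the extension becomes unramified, or trivial in the fierce setting. This gives $0<\delta_1\le H$.

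\emph{Induction step.} For $i>1$, compare $\chi_i$ and $\chi_{i-1}=p\chi_i$. Represent $\chi_i$ by a Kummer-type class of level $p^i$, for instance through Witt-vector or truncated-exponential descriptions of cyclic $p^i$-extensions, as in Wewers. Then compute how the conductor changes under multiplication by $p$.

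\emph{Stable regime.} When $\delta_{i-1}>h$, the $p$-th power map on principal units satisfies $(1+z)^p\equiv1+pz$ to leading order in the relevant range. The conductor therefore shifts exactly by $v(p)=1$, giving $\delta_i=\delta_{i-1}+1$. This is the analogue of Serre's upper-numbering result for the cyclotomic tower.

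\emph{Unstable regime.} When $\delta_{i-1}\le h$, the term $z^p$ dominates instead. The conductor is then multiplied by at least $p$, and the bound $\delta_i\le H$ comes from the same saturation as in the degree $p$ case.

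The upper-numbering identities \eqref{eq:cyclic-intervals} and quotient compatibility transfer the computation from $E_i/E_{i-1}$ back to $F$.

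The main obstacle is the unstable regime $\delta_{i-1}\le h$. There the residue field is imperfect with $[\kappa:\kappa^p]=p$, so the optimal normalization of Kummer generators requires Wewers's analysis of differentials and of the constant field $k$. I expect to rely directly on \cite[Corollary~3.6, Remark~4.2]{Wewers2014} and his main theorem for this step rather than redo it.
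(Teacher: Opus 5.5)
Your main route is the paper's proof: you match $i_Q$, $\Phi_Q$ and $c(\chi_i)$ with Wewers's lower numbering, Herbrand function and Swan conductor (using $v(E^\times)=v(F^\times)$) and then invoke his theorem, exactly as the paper cites \cite[Theorem~4.3(i)--(iii)]{Wewers2014} with $\delta_i=\operatorname{sw}(p^{n-i}\chi)$ as in \cite[Definition~4.1]{Wewers2014}. The optional Kummer sketch is unnecessary, and its degree-$p$ step is muddled: with $y=1+x$ one gets $\delta_1=H-v(x)$, so $\delta_1\le H$ comes from $v(x)\ge0$, the value $\delta_1=H$ being attained when $\bar y\notin\kappa^p$, while the end $v(x)\to H$ (unramified, hence excluded by fierceness) is what gives $\delta_1>0$.
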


\begin{proof}
This is \cite[Theorem 4.3(i)--(iii)]{Wewers2014}, with
$\delta_i=\operatorname{sw}(p^{n-i}\chi)$ as in
\cite[Definition 4.1]{Wewers2014}.
\end{proof}

\subsection{Unramified Lie bases and approximation}\label{sec:approximation}

\begin{definition}\label{def:classC}
Let $\mathcal C$ denote the class of complete valued fields $F$ for which there exists a completed residue field $K_0$ of a type~2 point on a smooth $\Cp$-curve, and an unramified Galois extension
$U/K_0$ with compact $p$-adic Lie Galois group, such that $F=\widehat U$.
\end{definition}

\begin{lemma}\label{lem:AS-Lie}
For $F=\widehat U\in\mathcal C$,
\begin{equation}\label{eq:lie-base-wewers}
 [\kappa(F):\kappa(F)^p]=p,\qquad
 \dim_{\mathbf F_p}H^1(\kappa(F),\mathbf F_p)=\infty.
\end{equation}
In particular these assertions hold for
$U=L^I$, where $L/K_0$ is Galois with compact $p$-adic Lie group
$D$ and inertia subgroup $I$.
\end{lemma}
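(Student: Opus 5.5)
The plan is to compute everything through the residue fields. Let $K_0=\mathcal H(x)$ be the type~2 field. Its residue field $\kappa_0=\kappa(K_0)$ is the function field of a smooth connected curve over $\overline{\mathbf F}_p=\kappa(\Cp)$. Since $|K_0^\times|=|\Cp^\times|$, the residue field alone controls all the invariants in \eqref{eq:lie-base-wewers}.

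\textbf{Step 1: residue field of $F$.} Because $U/K_0$ is unramified, $\kappa(U)$ is a separable algebraic extension of $\kappa_0$. Here unramified means that each finite subextension $U'/K_0$ is unramified: its residue extension is separable of degree $[U':K_0]$, and the value group does not change. Completion does not change the residue field, so $\kappa(F)=\kappa(U)=\varinjlim\kappa(U')$, with each $\kappa(U')$ a finite separable extension of $\kappa_0$.

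\textbf{Step 2: the $p$-degree.} A finitely generated field of transcendence degree one over a perfect field satisfies $[\kappa_0:\kappa_0^p]=p$. Separable algebraic extensions preserve the $p$-degree: if $E/\kappa_0$ is separable algebraic, then $E=E^p\kappa_0$, so a $p$-basis of $\kappa_0$ remains a $p$-basis of $E$. This gives $[\kappa(F):\kappa(F)^p]=p$.

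\textbf{Step 3: infinitude of $H^1$.} By Artin--Schreier theory,
\[
H^1(\kappa(F),\mathbf F_p)\simeq \kappa(F)/\wp(\kappa(F)),\qquad \wp(y)=y^p-y.
\]
For each finite layer $E=\kappa(U')$, the group $E/\wp(E)$ is infinite. To see this, pick a place $P$ of $E$ over $\overline{\mathbf F}_p$ with uniformizer $s$. The elements $s^{-m}$ with $p\nmid m$ are $\mathbf F_p$-linearly independent modulo $\wp(E)$, since the pole order at $P$ of any element of $\wp(E)+\sum_{p\nmid m} c_m s^{-m}$ is analysed in the usual way.

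The difficulty is that $\kappa(F)$ is an infinite extension of $\kappa_0$, so these classes could die in the limit. Their images in $H^1(\kappa(F),\mathbf F_p)$ survive exactly when the corresponding $\mathbf Z/p$-extensions of $\kappa_0$ are not contained in $\kappa(U)$. The group $\Gal(\kappa(U)/\kappa_0)\simeq\Gal(U/K_0)=D$ is a compact $p$-adic Lie group, so its maximal elementary abelian $p$-quotient $D/\Phi(D)$ is finite. Moreover $D$ contains an open uniform subgroup with finitely generated Frattini quotient.

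Combining these, I expect to argue as follows. Choose a finite layer $E$ such that $\Gal(\kappa(U)/E)=D'$ is uniform pro-$p$, or more generally open. Then the $\mathbf Z/p$-extensions of $E$ contained in $\kappa(U)$ correspond to $\Hom(D',\mathbf F_p)$, which is finite. The kernel of
\[
H^1(E,\mathbf F_p)\to H^1(\kappa(F),\mathbf F_p)
\]
is exactly $H^1(D',\mathbf F_p)$ by inflation–restriction, and this group is finite. Since $H^1(E,\mathbf F_p)$ is infinite, the image is infinite. This is the main step, and it relies only on finiteness of $H^1$ of open subgroups of $p$-adic Lie groups (Lazard).

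\textbf{Step 4: the case $U=L^I$.} $U/K_0$ is unramified by the definition of inertia, namely the residue extension of $L^I$ over $K_0$ is separable and the extension is unramified. Its Galois group $D/I$ is a quotient of a compact $p$-adic Lie group, hence itself a compact $p$-adic Lie group because $I$ is closed. So $\widehat{L^I}\in\mathcal C$, and Steps 1–3 apply.

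The expected obstacle is defining the inertia subgroup carefully for these non-discretely-valued (type~2) fields, and checking that $L^I/K_0$ is unramified in the sense used above (defectless, with separable residue extension). This reduces to finite Galois subextensions, where standard decomposition/inertia theory for henselian fields applies. Defect could only arise in the wild part, which lies in $I$ by definition.
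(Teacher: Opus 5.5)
Your proposal is correct and follows the paper's proof. The $p$-degree is preserved under the separable algebraic residue extension, the classes $[s^{-m}]$ with $p\nmid m$ give infinitely many independent Artin--Schreier classes, and inflation--restriction with the finite kernel $\Hom_{\mathrm{cont}}(\Gal(U/K_0),\mathbf F_p)$ shows that infinitely many of them survive in $H^1(\kappa(F),\mathbf F_p)$. The only difference is that the paper applies inflation--restriction directly over $\kappa_0$, using topological finite generation of the whole compact $p$-adic Lie group, so your detour through a finite layer $E$ with open uniform $D'$ is harmless but unnecessary.
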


\begin{proof}
Put $k_0=\kappa(K_0)$ and $k=\kappa(F)=\kappa(U)$.
The extension $k/k_0$ is Galois with group
$\Gamma=\Gal(U/K_0)$. A $p$-basis of the one-variable function
field $k_0/\overline{\mathbf F}_p$ remains a $p$-basis under
separable algebraic extension, proving the first assertion.

Choose a discrete valuation $w$ of $k_0$ and a uniformizer $t$.
The classes $[t^{-m}]$ with $m>0$ and $p\nmid m$ are independent
in $k_0/\wp(k_0)$, where $\wp(b)=b^p-b$: a nonzero combination
has pole order prime to $p$, whereas a pole of $\wp(b)$ has order
divisible by $p$. Inflation--restriction gives
\[
 0\longrightarrow\Hom_{\mathrm{cont}}(\Gamma,\mathbf F_p)
 \longrightarrow H^1(k_0,\mathbf F_p)
 \longrightarrow H^1(k,\mathbf F_p)^\Gamma.
\]
A compact $p$-adic Lie group is topologically finitely generated,
so the leftmost space is finite dimensional. The last map has
infinite-dimensional image, proving the second assertion.

For the last assertion, since $I$ is the inertia subgroup, $U/K_0$ is unramified with Galois group 
$D/I$, a compact $p$-adic Lie group, thus
$F=\widehat U\in\mathcal C$.
\end{proof}

\begin{lemma}\label{lem:henselian-approximation}
Let $V$ be a henselian rank-one field of characteristic zero.
Every finite Galois extension $E/\widehat V$ of degree $q$ descends
to a Galois extension $V(z_0)/V$ of degree $q$, with $z_0$
arbitrarily close to a prescribed primitive element $z\in E$.
\end{lemma}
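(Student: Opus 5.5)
The plan is to combine Krasner's lemma with the continuity of roots, using the henselian property of $V$ to keep the roots of nearby polynomials inside $V$'s algebraic extensions in a controlled way.

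\emph{Step 1: choose a nearby polynomial.} Fix a primitive element $z\in E$ of $E/\widehat V$, and let $P\in\widehat V[T]$ be its minimal polynomial. It is separable of degree $q$, since the characteristic is zero. Because $V$ is dense in $\widehat V$, we choose monic $P_0\in V[T]$ of degree $q$ with coefficients arbitrarily close to those of $P$.

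\emph{Step 2: locate the roots of $P_0$.} Work in an algebraic closure of $\widehat V$, whose valuation is unique. By continuity of roots, for each root $z_j$ of $P$ there is a root $w_j$ of $P_0$ with $|w_j-z_j|$ small. Once the approximation is finer than the Krasner radius
\[
 r=\min_{i\ne j}|z_i-z_j|,
\]
the roots $w_j$ are distinct, and they are matched bijectively with the $z_j$. Krasner's lemma over $\widehat V$ then gives $\widehat V(w_j)=\widehat V(z_j)=E$ for every $j$. Take $z_0=w_1$, the root closest to $z=z_1$.

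\emph{Step 3: descend to $V$.} Since $V$ is henselian, its algebraic closure inside $\overline{\widehat V}$ carries the restricted valuation, and $\widehat V\otimes_V V'$ is a field for every finite extension $V'/V$. The restriction map $\Gal(\overline V/V)\to\Gal(\overline{\widehat V}/\widehat V)$ is an isomorphism; this is the standard fact that henselian fields and their completions have the same absolute Galois group, which can itself be derived from Krasner's lemma.

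The key deduction is that $P_0$ is irreducible over $V$. A factorization over $V$ would be a factorization over $\widehat V$. But every root of $P_0$ generates the degree-$q$ extension $E$ over $\widehat V$, so $P_0$ is irreducible over $\widehat V$. Hence $[V(z_0):V]=q$, and
\[
 \widehat V\otimes_V V(z_0)\simeq\widehat V(z_0)=E.
\]

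\emph{Step 4: the Galois property.} All roots $w_j$ lie in $E$. We must show they lie in $V(z_0)$. Each $w_j$ is algebraic over $V$, and the field $V(z_0,w_j)$ has completion inside $E$. Its degree over $V$ equals its degree over $\widehat V$ (linear disjointness via the Galois isomorphism in Step 3), and that degree is at most $q$. Hence $V(z_0,w_j)=V(z_0)$. Therefore $V(z_0)$ is the splitting field of $P_0$ and is Galois of degree $q$.

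The main obstacle is Step 4, namely the linear disjointness of $\widehat V$ and $\overline V$ over $V$. I would handle it first by invoking the Galois-group isomorphism for henselian fields, with Krasner's lemma as the alternative route.
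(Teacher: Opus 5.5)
Your proposal is correct and follows essentially the same route as the paper: approximate the minimal polynomial by $P_0\in V[T]$, use Krasner to see that every root of $P_0$ generates $E$ (so $P_0$ is irreducible and $[V(z_0):V]=q$), and then use the henselian property to put all conjugates of $z_0$ inside $V(z_0)$. The only difference is how that last step is phrased. The paper says $V(z_0)$ is henselian and hence separably closed in its completion $E$, while you use the equivalent degree preservation $[V':V]=[\widehat V V':\widehat V]$ for finite $V'/V$. One small slip: the restriction map goes $\Gal(\overline{\widehat V}/\widehat V)\to\Gal(\overline V/V)$, not the other way.
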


\begin{proof}
Approximate the minimal polynomial of $z$ over $\widehat V$
by a monic degree-$q$ polynomial over $V$. Hensel's and Krasner's
lemmas give a nearby root $z_0\in E$ with
$\widehat V(z_0)=E$, hence $[V(z_0):V]=q$ and
$\widehat{V(z_0)}=E$. The field $V(z_0)$ is henselian and
therefore separably closed in its completion
\cite[Theorem~1.9.5(ii)]{TemkinNotes}. Every conjugate of $z_0$
lies in $E$ and hence already in $V(z_0)$, proving Galois descent.
\end{proof}

\begin{lemma}\label{lem:unramified-approximation}
Let $F=\widehat U\in\mathcal C$ and let $E/F$ be finite Galois
with group $Q$.
There are complete discretely valued subfields
$K_{\mathrm d}\subset F$ and $E_{\mathrm d}\subset E$ such that
\[
 \kappa(K_{\mathrm d})=\kappa(F)=\kappa(U),\qquad
 \kappa(E_{\mathrm d})=\kappa(E),\qquad
 E_{\mathrm d}\otimes_{K_{\mathrm d}}F\simeq E.
\]
The extension $E_{\mathrm d}/K_{\mathrm d}$ is Galois, and
restriction identifies its Galois group and ramification numbers
with those of $E/F$, also for corresponding intermediate extensions.
The field $K_{\mathrm d}$ satisfies the base-field hypotheses of
Theorem~\ref{thm:wewers}. Moreover,
\[
 [E:F]=[\kappa(E):\kappa(F)],
\]
and $\mathcal C$ is closed under finite extensions.
\end{lemma}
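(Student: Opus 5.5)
The plan is to build the discretely valued models by descent from a finite level of the unramified tower, and then read off Wewers's hypotheses from the geometry of the type~2 point. Write $F=\widehat U$ with $U/K_0$ unramified Galois and $K_0=\mathcal H(x)$ for a type~2 point $x$ on a smooth $\Cp$-curve. Since $K_0$ is the completed residue field of a type~2 point, it is the completion of a henselian field $V_0$ that is a direct limit of function fields of curves over $\Cp$. Concretely, one takes $V_0$ to be the henselization of the local ring at the generic point of the special fibre of a semistable model. Then $U$ is the completion of a henselian unramified Galois extension $V$ of $V_0$. By Lemma~\ref{lem:henselian-approximation}, the finite Galois extension $E/F$ descends to a Galois extension $V(z_0)/V$ of the same degree with completion $E$, and the descended group is the same group $Q$.

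For the discretely valued subfields, I will use that the value group of $\Cp$ is divisible while $\kappa(K_0)$ is a function field $k_0$ of one variable over $\overline{\mathbf F}_p$. Choose a lift $\tilde\tau\in K_0^\circ$ of a separating transcendence element $\tau$ of $k_0$. Let $K_{\mathrm d}$ be the completion of the relative algebraic closure of the henselization of $\operatorname{Frac}W(\overline{\mathbf F}_p)(\tilde\tau)$ inside $U$. Here the valuation on $\operatorname{Frac}W(\overline{\mathbf F}_p)(\tilde\tau)$ is the Gauss valuation, and the construction also adjoins $\mu_p$. Since this construction only involves algebraic extensions with value group $\frac1e\Z$ at each finite stage, I will arrange it so that the value group stays discrete. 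I will also arrange that the residue field exhausts $\kappa(U)$, because $\kappa(U)$ is separable algebraic over $\overline{\mathbf F}_p(\tau)$ and separable residue extensions lift. Then $E_{\mathrm d}$ is obtained by taking a primitive element $z_0$ of $V(z_0)$ and approximating its minimal polynomial by one over the algebraic part of $K_{\mathrm d}$, again via Hensel and Krasner. The point is that only finitely many coefficients are needed and that density of the algebraic closure of $\operatorname{Frac}W(\overline{\mathbf F}_p)(\tilde\tau)$ holds at the level of residue fields plus constants.

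Next I will show that $E_{\mathrm d}\otimes_{K_{\mathrm d}}F\simeq E$ and that the ramification numbers agree. Degrees match by construction. The infimum in \eqref{eq:ramification-numbers} can be computed on a set that generates $\OO_E$ topologically, and $\OO_{E_{\mathrm d}}\widehat\otimes\OO_F$ is dense in $\OO_E$. So $i_Q$ is the same computed over $E_{\mathrm d}$ or over $E$, once I know the integral structures are compatible. The key input for this is the equality $[E:F]=[\kappa(E):\kappa(F)]$, i.e.\ fierceness. For $E_{\mathrm d}/K_{\mathrm d}$, one has $e\,f\,d=[E_{\mathrm d}:K_{\mathrm d}]$, where $d$ is the defect. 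I expect to show $e=1$ and $d=1$ using that the value group of $F$ is divisible: any ramification index $e>1$ in $E_{\mathrm d}$ gets absorbed by $F$, which contradicts linear disjointness unless it was already trivial after the base change. Dually, fierceness of $E/F$ follows from the facts that $F$ has divisible value group ($e=1$) and that $F$ is a stable field in the sense of Grauert--Remmert/Temkin ($d=1$), since $F$ is the completion of an unramified extension of a type~2 field. The separable residue part corresponds to an unramified subextension, which cannot exist nontrivially: such an extension would be absorbed in $U$ only if unramified extensions of $F$ came from $U$. So I need to allow a residually separable part and handle it separately; it will be tame and can be absorbed by enlarging $U$. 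This is the main obstacle: I must either show that $\kappa(F)$ is separably closed relative to the relevant extensions, or reduce to the purely inseparable case by passing to the maximal unramified subextension. For the latter, I would first try to prove that the maximal residually separable subextension of $E/F$ is $\widehat{U'}$ for a finite unramified $U'/U$, which stays in $\mathcal C$.

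The final steps are then immediate. Wewers's hypotheses for $K_{\mathrm d}$ come from Lemma~\ref{lem:AS-Lie}, since the residue field is unchanged. The constant field is the closure of $\operatorname{Frac}W(\overline{\mathbf F}_p)$ with the $\mu_p$ adjoined, and weak unramifiedness holds because the uniformizer can be chosen in the constants, as $\tilde\tau$ was a unit. Closure of $\mathcal C$ under finite extensions follows from $E=\widehat{V(z_0)}$ together with $V(z_0)\supset V$: after passing to the Galois closure over $V_0$, the field $V(z_0)$ is an extension of a henselization, hence the completion of a finite extension of $K_0$'s model. Combined with fierceness and Galois closure over $K_0$, this exhibits $E$ as the completion of an unramified Lie extension of a finite extension of $K_0$, which is again a type~2 field.
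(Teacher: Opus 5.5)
Your skeleton matches the paper's: discretely valued subfields with residue field $\kappa(U)$ and finitely ramified constants, followed by descent of $E$ through Lemma~\ref{lem:henselian-approximation} using finitely many coefficients. However, the step that makes the discrete model usable is missing. Approximating a primitive element only produces some Galois $E_{\mathrm d}=K_{\mathrm d}(z_0')$ with $E_{\mathrm d}\otimes_{K_{\mathrm d}}F\simeq E$. It gives no control of $\kappa(E_{\mathrm d})$, and both of your arguments for that control are false.

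Here is a counterexample. Let $\pi\in k$ be a uniformizer of $K_{\mathrm d}$, put $u=\tilde\tau$ (so $\bar u\notin R^p$), and let $E=F(u^{1/p})$. The primitive element $(\pi u)^{1/p}$ has minimal polynomial $X^p-\pi u$ already over $K_{\mathrm d}$, so your procedure may output $E_{\mathrm d}=K_{\mathrm d}((\pi u)^{1/p})$. This extension is cyclic of degree $p$ and totally ramified, with $\kappa(E_{\mathrm d})=R\neq\kappa(E)$. Yet $E_{\mathrm d}\otimes_{K_{\mathrm d}}F\simeq F(u^{1/p})$ is still a field, because $\pi^{1/p}\in F$. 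So $F$ absorbs the ramification without contradicting linear disjointness. Also, $\OO_{E_{\mathrm d}}\otimes_{\OO_{K_{\mathrm d}}}\OO_F=\bigoplus_j\OO_F(\pi u)^{j/p}$ is a proper closed lattice in $\OO_E=\bigoplus_j\OO_F u^{j/p}$, not a dense subring. Such a model is not fierce, so Theorem~\ref{thm:wewers} cannot be applied to it. That application is exactly how the lemma is used in Proposition~\ref{prop:abelian-power-filtration}.

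\textbf{The missing idea.} First obtain $[E:F]=[\kappa(E):\kappa(F)]$. You only assert that $F$ is stable. The paper deduces defectlessness of $F$ from three inputs: stability of $K_0$, defectlessness of its unramified extension $U$, and Lemma~\ref{lem:henselian-approximation} to pass to the completion. Since $v(F^\times)$ is divisible, this gives the equality whether or not $\kappa(E)/\kappa(F)$ is separable. So your ``main obstacle'' about a residually separable part is not an obstacle for this statement.

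Then proceed as follows:
\begin{enumerate}
\item Choose units $b_1,\dots,b_q\in E$ lifting an $R$-basis of $\kappa(E)$.
\item Approximate them by $\beta_j$ with $|\beta_j-b_j|<1$, taken in the descended field $V(w)$ over a dense henselian union $V=\bigcup_kK_k$ of discrete subfields.
\item Take $k$ large enough that $K_k$ contains the coefficients of the minimal polynomial of $w$, of the conjugates $\sigma(w)$, and of the $\beta_j$ in the power basis.
\end{enumerate}
Then $\beta_j\in E_{\mathrm d}$ forces $\kappa(E_{\mathrm d})=\kappa(E)$. Moreover, $\beta_1,\dots,\beta_q$ is a common orthonormal basis for $E_{\mathrm d}/K_{\mathrm d}$ and $E/F$. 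Hence $\|\sigma-1\|$, and therefore $i(\sigma)$, agree in both fields, and the fundamental inequality handles intermediate fields.

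\textbf{The definition of $K_{\mathrm d}$.} Your $K_{\mathrm d}$, the completed relative algebraic closure inside $U$, contains $\overline{\mathbb Q}_p$ and is not discretely valued. Instead you need the directed family $K_k$: adjoin a finite constant extension $k\ni\mu_p$ of $\operatorname{Frac}W(\overline{\mathbf F}_p)$ to the completed unramified extension of $\widehat{\operatorname{Frac}W(\overline{\mathbf F}_p)(\tilde\tau)}$ with residue field $R$. Then take $K_{\mathrm d}=K_k$ for one large enough $k$, as in step 3.

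\textbf{Closure under finite extensions.} By contrast, this part of your argument can be made rigorous, and more simply than in the paper. The coefficients of the minimal polynomial of $z_0$ over $U$ lie in a finite extension $U_1/K_0$, and $K_0'=U_1(z_0)$ is a type~2 field. Then $U(z_0)=UK_0'$ is unramified Galois over $K_0'$ with compact $p$-adic Lie group, and its completion is $E$. No fierceness is needed there.
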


\begin{proof}
Put
$q=[E:F]$ and $R=\kappa(U)=\kappa(F)$. The field $K_0$ is
stable \cite[Example~4.2.5(i)]{Temkin2017}, so its unramified
extension $U$ is defectless \cite[Proposition~2.18]{Kuhlmann2010}.
Lemma~\ref{lem:henselian-approximation} shows that completion
preserves defectlessness here: finite Galois extensions descend
with their degrees, and completion changes neither residue fields
nor value groups. Thus $F$ is defectless. Since its value group
is divisible, $q=[\kappa(E):R]$. Choose units
$b_1,\ldots,b_q\in E$ lifting an $R$-basis of $\kappa(E)$.

We now construct a directed family of complete discretely valued
subfields of $F$, all with residue field $R$, whose union is dense.
Put $B=\operatorname{Frac}W(\overline{\mathbf F}_p)\subset\Cp$.
Choose a lift $T\in\OO_{K_0}$ of a separating residue parameter
$t$ of $\kappa(K_0)/\overline{\mathbf F}_p$.
Then $K_0/K_{\mathrm G}$, where
$K_{\mathrm G}=\widehat{\Cp(T)}$, is finite unramified by
\cite[Corollary~6.3.4 and Theorem~6.3.1(iii)]{Temkin2010}.
The field $D=\widehat{B(T)}$ is complete discretely valued with
uniformizer $p$ and residue field $\overline{\mathbf F}_p(t)$.
Since $R/\overline{\mathbf F}_p(t)$ is separable algebraic, the
unramified correspondence gives an unramified algebraic extension
$D_R/D$ inside $U$ with residue field $R$. Set
$A=\widehat{D_R}\subset F$; it is still complete discretely
valued with uniformizer $p$ and residue field $R$.
For each finite extension $k/B$ inside $\Cp$ containing $\mu_p$,
put $K_k=Ak$. Since $k/B$ is totally ramified, an Eisenstein
polynomial for its uniformizer remains Eisenstein over $A$.
Thus $K_k$ is complete discretely valued and
\[
 \kappa(K_k)=\kappa(U)=R,\qquad v(K_k^\times)=v(k^\times).
\]
The union $V=\varinjlim_k K_k$ is henselian and dense in $F$:
its closure contains $\Cp$, $D_R$, and hence $K_{\mathrm G}$;
the unramified correspondence gives $D_RK_{\mathrm G}=U$.
Each $K_k$ has the residue-field properties of
Lemma~\ref{lem:AS-Lie}. Its constant field contains $k$, so the
equality of value groups makes $K_k$ weakly unramified. Together
with $\mu_p\subset k$, these are the base-field hypotheses of
Theorem~\ref{thm:wewers}.

Apply Lemma~\ref{lem:henselian-approximation} to $V\subset F$.
It gives $w\in E$ such that $V(w)/V$ is Galois of degree $q$ and
$F(w)=E$. Since $V(w)$ is dense in $E$, we can  choose
$\beta_j\in V(w)$ with $|\beta_j-b_j|<1$ for $1\le j\le q$.
The defining polynomial of $w$ and the power-basis expressions
of all $\sigma(w)$ and $\beta_j$ involve finitely many
coefficients in $V$, all of which lie in one $K_k$.
Put $K_{\mathrm d}=K_k$ and $E_{\mathrm d}=K_k(w)$.
Then $E_{\mathrm d}/K_{\mathrm d}$ is Galois of degree $q$,
with group $Q$ identified by restriction, and
$E_{\mathrm d}\otimes_{K_{\mathrm d}}F\simeq E$.
Since $\bar\beta_j=\bar b_j$, its residue field contains an
$R$-basis of $\kappa(E)$, proving
$\kappa(E_{\mathrm d})=\kappa(E)$.
The common orthonormal basis $\beta_1,\ldots,\beta_q$ gives
$\|\sigma-1\|=\max_j|\sigma\beta_j-\beta_j|$ in both fields,
and hence
\begin{equation}\label{eq:discrete-exact-numbers}
 i_{E_{\mathrm d}/K_{\mathrm d}}(\sigma)=i_{E/F}(\sigma).
\end{equation}
The degree equality and the fundamental inequality in towers
likewise give $\kappa(E_{\mathrm d}^H)=\kappa(E^H)$ for every
subgroup $H\subseteq Q$. Common residue bases in these fields
then prove the same assertion for all intermediate extensions.

To prove closure under finite extensions, put $S=\kappa(E)$.
Choose a finite extension $S_0/\kappa(K_0)$ inside $S$ with
$S=RS_0$. Then $S/S_0$ is Galois with compact $p$-adic Lie group,
a open subgroup of $\Gal(R/\kappa(K_0))$.
Choose a separating parameter $s$ of
$S_0/\overline{\mathbf F}_p$ and a unit lift $T'\in E_{\mathrm d}$.
The field $H_0=\widehat{k(T')}$ and $E_{\mathrm d}$ have a common
uniformizer. The unramified algebraic extension $H_S/H_0$ inside
$E_{\mathrm d}$ with residue field $S$ is therefore dense in
$E_{\mathrm d}$, by successive approximation with this uniformizer.
Set $G_0=\widehat{\Cp(T')}$ and $U'=H_SG_0\subset E$.
The subextension of $U'/G_0$ corresponding to $S_0$ is a type~2
field $K_0'$, and $U'/K_0'$ is unramified Galois with compact
$p$-adic Lie group. Its completion contains $E_{\mathrm d}$ and
$\Cp$, hence $F=\widehat{A\Cp}$ and $E=E_{\mathrm d}F$.
Thus $E=\widehat{U'}\in\mathcal C$. The same construction applies
to the discrete models of intermediate fields of a Galois closure,
proving closure under arbitrary finite extensions.
\end{proof}

\begin{lemma}\label{lem:inertia-reduction}
Let $K_0$ be type~2 over $\Cp$, let $U/K_0$ be unramified
Galois with compact $p$-adic Lie group, and put $F=\widehat U$.
For a finite Galois extension $E/F$ with group $Q$ and inertia
$I$, put $F'=E^I$. Then $F'/F$ is unramified,
$F'\in\mathcal C$, $E/F'$ is fierce, and $\kappa(F')$ satisfies
\eqref{eq:lie-base-wewers}. Moreover,
\begin{equation}\label{eq:inertia-positive}
 \Phi_{E/F}=\Phi_{E/F'},\qquad Q[a]=I[a]\quad(a>0).
\end{equation}
\end{lemma}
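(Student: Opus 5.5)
We treat the claims in the order stated, resting mainly on Lemma~\ref{lem:unramified-approximation} and the defectlessness established in its proof. By that lemma, $[E:F]=[\kappa(E):\kappa(F)]$, and since $F$ has divisible value group (it contains $\Cp$), the extension $E/F$ has no ramification in the value group. The inertia subgroup $I\subseteq Q$ is the kernel of the action on $\kappa(E)$, and $\kappa(E)/\kappa(F)$ is normal. Hence $\kappa(F')$ is the separable closure $\kappa(F)^{\mathrm s}_E$ of $\kappa(F)$ in $\kappa(E)$, and $[F':F]=|Q/I|=[\kappa(F'):\kappa(F)]$. Combined with the equality of value groups, this shows $F'/F$ is unramified: it is the completed unramified extension with separable residue extension. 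For $E/F'$, the residue extension $\kappa(E)/\kappa(F')$ is purely inseparable of degree $|I|=[E:F']$, so $E/F'$ is fierce by Definition~\ref{def:fierce}.

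Next we show $F'\in\mathcal C$. Since $F'/F$ is finite, the closure statement of Lemma~\ref{lem:unramified-approximation} gives this directly. More concretely, we lift $F'$ to a finite unramified Galois extension $U'/U$ via Lemma~\ref{lem:henselian-approximation} applied to the henselian field $U$. Then $U'/K_0$ is still unramified, and after passing to a Galois closure over a finite subextension it has compact $p$-adic Lie group, being an extension of $\Gal(U/K_0)$ by a finite group. Lemma~\ref{lem:AS-Lie} then yields \eqref{eq:lie-base-wewers} for $\kappa(F')$.

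The ramification identity is the main step. For $\sigma\notin I$, the automorphism $\sigma$ acts nontrivially on $\kappa(E)$, so some unit $z$ has $v(\sigma z-z)=0$, giving $i_Q(\sigma)=0$. For $\sigma\in I$, the number $i_Q(\sigma)$ is defined by the same infimum over $E^\times$ whether $\sigma$ is viewed in $Q$ or in $I$, so $i_Q(\sigma)=i_I(\sigma)$. Substituting into \eqref{eq:herbrand}, the terms with $\sigma\notin I$ contribute $\min\{0,t\}=0$ for $t\ge0$. This gives $\Phi_{E/F}(t)=\sum_{\sigma\in I}\min\{i_I(\sigma),t\}=\Phi_{E/F'}(t)$. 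The inverses $\Psi$ then agree, and for $a>0$ we have $\Psi(a)>0$. Since $Q_t\subseteq I$ for $t>0$ and $Q_t\cap I=I_t$, we get $Q[a]=Q_{\Psi(a)}=I_{\Psi(a)}=I[a]$.

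The expected obstacle is not this computation but the identification of inertia with the residually trivial subgroup and the resulting degree count $[F':F]=[\kappa(F'):\kappa(F)]$. This count relies on the defectlessness of $F$ proved inside Lemma~\ref{lem:unramified-approximation}, so we invoke that argument explicitly rather than reprove it. One technical point also needs checking: $i_Q(\sigma)\ge0$ for all $\sigma$, which holds by the reformulation $\min_{z\in\OO_E}v(\sigma z-z)$ noted after \eqref{eq:ramification-numbers}. That reformulation is valid because $v(E^\times)=v(F^\times)$.
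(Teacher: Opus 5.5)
Your proposal is correct and follows essentially the same route as the paper's proof. You obtain $F'\in\mathcal C$ from the closure statement of Lemma~\ref{lem:unramified-approximation}, deduce fierceness from the residue-degree equality, get the residue conditions from Lemma~\ref{lem:AS-Lie}, and compare ramification numbers ($i_Q(\sigma)=0$ off $I$, unchanged on $I$) to conclude $Q_t=I_t$ for $t>0$. The only differences are minor: you derive $[E:F']=[\kappa(E):\kappa(F')]$ by dividing the equality for $E/F$ by the separable residue degree, whereas the paper applies it directly over $F'\in\mathcal C$; and your supplementary Galois-closure sketch is loose (one needs a finite $K_0'\subset U'$ over which $U'$ itself is Galois), though it is not needed.
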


\begin{proof}
The unramified correspondence identifies $F'/F$ with the maximal
separable residue subextension. Lemma~\ref{lem:unramified-approximation}
gives $F'\in\mathcal C$ and
$[E:F']=[\kappa(E):\kappa(F')]$, so $E/F'$ is fierce.
Apply Lemma~\ref{lem:AS-Lie} to a presentation
$F'=\widehat{U'}$ with $U'/K_0'$ unramified Galois and Lie group.
If $\sigma\notin I$, it moves a residue class, giving $i_Q(\sigma)=0$.
For $\sigma\in I$, the norm of $\sigma-1$ on $E$ is unchanged.
Thus $Q_t=I_t$ for $t>0$, which proves \eqref{eq:inertia-positive}.
\end{proof}

\begin{proposition}\label{prop:abelian-power-filtration}
Let $K_0$ be type~2 over $\Cp$, let $U/K_0$ be unramified
Galois with compact $p$-adic Lie group, and put $F=\widehat U$.
For every finite abelian $p$-extension $E/F$ with group $A$,
\begin{align}
 (A[a])^p&\subseteq A[pa]&&(0<a\le h),\label{eq:A3-low}\\
 (A[a])^p&=A[a+1]&&(a>h).\label{eq:A3-high}
\end{align}
\end{proposition}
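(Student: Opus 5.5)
Reduce to the cyclic fierce case, then read off the power-filtration from Wewers's break estimates.

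\emph{Reduction to the fierce case.} Let $I$ be the inertia of $A$ and $F'=E^I$. By Lemma~\ref{lem:inertia-reduction}, $F'\in\mathcal C$, $E/F'$ is fierce, and $A[a]=I[a]$ for $a>0$. Both \eqref{eq:A3-low} and \eqref{eq:A3-high} only involve $a>0$ and $pa,a+1>0$, so we may replace $(F,A)$ by $(F',I)$ and assume $E/F$ is fierce.

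\emph{Transfer to Wewers's setting.} Apply Lemma~\ref{lem:unramified-approximation} to get discrete models $E_{\mathrm d}/K_{\mathrm d}$. The base $K_{\mathrm d}$ satisfies the hypotheses of Theorem~\ref{thm:wewers}, the Galois group is still $A$, and ramification numbers agree, also on intermediate extensions. Hence the lower filtration, $\Phi$, the upper filtration $A[a]$ and conductors $c(\eta)$ agree too, and the statement may be proved for $E_{\mathrm d}/K_{\mathrm d}$.

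\emph{Characters and the cyclic case.} For abelian $A$, I will use the character characterization: $\sigma\in A[a]$ iff $\eta(\sigma)=0$ for every character $\eta$ with $c(\eta)<a$. This follows from \eqref{eq:character-conductor}, since $A[a]=\bigcap_{c(\eta)<a}\ker\eta$; I would verify this by noting that upper numbering is compatible with quotients and that characters of $A/A[a]$ separate points. Then $\sigma^p\in A[b]$ iff $(p\eta)(\sigma)=0$ whenever $c(\eta)<b$. So both inclusions reduce to comparing $c(p\eta)$ with $c(\eta)$, and this comparison happens inside the cyclic quotient $A/\ker\eta$.

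In Lemma~\ref{lem:cyclic-breaks}, if $\eta=\chi_i$ then $p\eta=\chi_{i-1}$. Theorem~\ref{thm:wewers} gives:
\begin{itemize}
\item $c(\chi_i)=c(\chi_{i-1})+1$ when $c(\chi_{i-1})>h$;
\item $c(\chi_i)\ge p\,c(\chi_{i-1})$ when $c(\chi_{i-1})\le h$.
\end{itemize}
Equivalently, $c(p\eta)=c(\eta)-1$ if $c(\eta)>H$ and $c(p\eta)>h$, while otherwise $c(p\eta)\le c(\eta)/p$.

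\emph{Proof of \eqref{eq:A3-low}.} Let $0<a\le h$, $\sigma\in A[a]$, and let $\eta$ satisfy $c(\eta)<pa$. Since $c(p\eta)$ is either at most $c(\eta)/p<a$ or at most $c(\eta)-1$, the recursion in both branches gives $c(p\eta)<a$. Hence $(p\eta)(\sigma)=0$, so $\sigma^p\in A[pa]$.

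\emph{Proof of \eqref{eq:A3-high}, inclusion $\subseteq$.} Let $a>h$ and $c(\eta)<a+1$. In the branch $c(p\eta)>h$ we get $c(p\eta)=c(\eta)-1<a$. In the other branch $c(p\eta)\le h<a$. Either way $(p\eta)(\sigma)=0$ for $\sigma\in A[a]$, so $\sigma^p\in A[a+1]$.

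\emph{Main obstacle: the reverse inclusion $A[a+1]\subseteq(A[a])^p$.} I would prove it by duality. The annihilator of $(A[a])^p$ in the character group $A^\vee$ is $\{\eta: p\eta\in A[a]^\perp\}$. The annihilator of $A[a+1]$ is the group generated by characters with $c<a+1$. So it suffices to show: if $c(p\eta)<a$, then $c(\eta)<a+1$.
\begin{itemize}
\item If $c(p\eta)>h$, Wewers gives $c(\eta)=c(p\eta)+1<a+1$.
\item If $c(p\eta)\le h$, Theorem~\ref{thm:wewers} gives $c(\eta)\le H=h+1<a+1$, using $a>h$.
\end{itemize}
The delicate points are the case $p\eta=0$, where $c(\eta)=\delta_1\le H$ holds by Theorem~\ref{thm:wewers}, and justifying that annihilators of the groups $A[b]$ are generated by characters of small conductor. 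The latter needs the upper filtration to be left-continuous in $b$, which Lemma~\ref{lem:cyclic-breaks} provides in the cyclic case.
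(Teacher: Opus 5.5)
Your proposal is correct and follows essentially the paper's route: pass to the inertia field via Lemma~\ref{lem:inertia-reduction}, transfer to the discrete model of Lemma~\ref{lem:unramified-approximation}, apply Theorem~\ref{thm:wewers} to $x=c(p\eta)$, $y=c(\eta)$ in each cyclic quotient, and conclude by annihilator duality using $(A[a])^\perp=\{\eta:c(\eta)<a\}$. The left-continuity you flag at the end does not depend on Lemma~\ref{lem:cyclic-breaks} or on cyclicity: it follows directly from the definitions of $Q_t$ and $\Psi_Q$ (which is also why the maximum defining $c(\eta)$ is attained), so that step needs no extra input.
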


\begin{proof}
Let $I$ be the inertia group and put $F'=E^I$.
Lemma~\ref{lem:inertia-reduction} gives $F'\in\mathcal C$,
the residue conditions \eqref{eq:lie-base-wewers}, and
$A[a]=I[a]$ for $a>0$. The case $I=1$ is immediate.
Otherwise apply Lemma~\ref{lem:unramified-approximation} to $E/F'$.
The discrete model has the same residue fields, so it is fierce;
it has the same filtered group $I$ and satisfies Wewers's hypotheses. Quotient compatibility
(Lemma~\ref{lem:classC-Herbrand}) therefore transfers
Theorem~\ref{thm:wewers} to its cyclic characters. For
$\chi\in I^\vee=\Hom(I,\Qp/\Zp)$, put $x=c(p\chi)$,
$y=c(\chi)$. The resulting inequalities are
\begin{equation}\label{eq:classC-conductors}
 px\le y\le H\quad(x\le h),\qquad y=x+1\quad(x>h),
\end{equation}
with the degree-$p$ bound including the case  $x=0$.

Write $I$ additively. The annihilators are
\[
 (I[a])^\perp=\{\chi:y<a\},\qquad
 (pI[a])^\perp=\{\chi:x<a\}.
\]
For $a>h$, \eqref{eq:classC-conductors} gives
$y<a+1\Longleftrightarrow x<a$, hence $pI[a]=I[a+1]$.
For $0<a\le h$, it gives $x\ge a\Longrightarrow y\ge pa$,
hence $(I[pa])^\perp\subseteq(pI[a])^\perp$ and
$pI[a]\subseteq I[pa]$. Substitute $I[a]=A[a]$ completes the proof.
\end{proof}

\begin{lemma}\label{lem:classC-Herbrand}
For $E/F$ finite Galois with $F\in\mathcal C$, group $Q$,
and $M=E^N$ for $N\triangleleft Q$,
\[
 \Phi_{E/F}=\Phi_{M/F}\circ\Phi_{E/M},\qquad
 (Q/N)[a]=Q[a]N/N,\qquad Q[a]\cap N=N[\Psi_{M/F}(a)].
\]
The formulas also hold over finite intermediate fields of an
infinite Galois extension.
\end{lemma}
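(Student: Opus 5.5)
The plan is to reduce to the classical Herbrand theory for complete discretely valued fields by means of the discrete models of Lemma~\ref{lem:unramified-approximation}. Fix $E/F$ finite Galois with group $Q$, $F\in\mathcal C$, and $N\triangleleft Q$ with $M=E^N$. The lower numbering functions $i_Q$, $i_N$, $i_{Q/N}$ are defined by the norm formula \eqref{eq:ramification-numbers}. Hence all three Herbrand functions $\Phi_{E/F}$, $\Phi_{E/M}$, $\Phi_{M/F}$ depend only on these ramification numbers.

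First, I would dispose of the inertia. Put $F'=E^I$. By Lemma~\ref{lem:inertia-reduction}, elements outside $I$ have $i_Q=0$, and $\Phi_{E/F}=\Phi_{E/F'}$. The same holds for $N$, with inertia $I\cap N$, and for $Q/N$, with inertia $IN/N$, since the inertia of $M/F$ is the image of $I$. Thus the whole statement reduces to the fierce case over some $F'\in\mathcal C$, which is still in $\mathcal C$. Even without this reduction, the identities are compatible with the zero parts, so this step is bookkeeping.

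Next, I would apply Lemma~\ref{lem:unramified-approximation} to $E/F$. This gives complete discretely valued $K_{\mathrm d}\subset F$ and $E_{\mathrm d}\subset E$ with the same Galois group $Q$. The key feature is that restriction identifies the ramification numbers of all intermediate extensions, in particular those of $E_{\mathrm d}/E_{\mathrm d}^N$ and of $E_{\mathrm d}^N/K_{\mathrm d}$ with those of $E/M$ and $M/F$. For the complete discretely valued extension $E_{\mathrm d}/K_{\mathrm d}$ I would invoke the standard transitivity of Herbrand functions, $\varphi_{E/K}=\varphi_{M/K}\circ\varphi_{E/M}$, and Herbrand's theorem $(Q/N)^v=Q^vN/N$. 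These are valid in the imperfect residue field setting with the ``$i$'' normalization (Abbes--Saito/Wewers \cite[Remark 3.1]{Wewers2014}, or Serre's classical argument, which uses only the inequality $i_{Q/N}(\bar\sigma)=\frac1{|N|}\sum_{\tau\mapsto\bar\sigma}\min(i_Q(\tau),\ldots)$-type formula). The main obstacle is exactly here. One must make sure that the fierce/monogenic hypothesis needed for the lower-to-upper compatibility holds, since in general imperfect residue field extensions are not monogenic. I would first check that $\mathcal O_{E_{\mathrm d}}$ is monogenic over $\mathcal O_{K_{\mathrm d}}$, which holds because the residue extension is a purely inseparable extension of degree $p^m$ with $[\kappa:\kappa^p]=p$, hence simple. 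The key formula $i_{Q/N}(\bar\sigma)=\frac1{e_N}\sum_{\tau\mapsto\bar\sigma}i_Q(\tau)$ (adapted to $\Phi$ normalized as in \eqref{eq:herbrand}) then follows by Serre's computation with the generator and the norm of its minimal polynomial. Note that $e_N=1$ here, so $\Phi=\sum\min\{i,t\}$ is the right normalization.

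Finally, I would derive the three displayed identities from transitivity. The identity $\Phi_{E/F}=\Phi_{M/F}\circ\Phi_{E/M}$ transfers verbatim because all functions coincide with the discrete-model ones. The quotient formula follows from Herbrand's theorem via $Q[a]=Q_{\Psi_{E/F}(a)}$. The intersection formula is immediate from the definitions: $Q_t\cap N=N_t$ because $i_N=i_Q|_N$. Combined with $\Psi_{E/F}=\Psi_{E/M}\circ\Psi_{M/F}$, this gives $Q[a]\cap N=N_{\Psi_{E/M}(\Psi_{M/F}(a))}=N[\Psi_{M/F}(a)]$. For the last sentence, given an infinite Galois extension with finite intermediate fields $F_1\subset E_1$ of it, Lemma~\ref{lem:unramified-approximation} (closure of $\mathcal C$ under finite extensions) places $F_1$ in $\mathcal C$. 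Therefore the same argument applies to each finite Galois subextension.
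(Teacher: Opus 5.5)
Your proposal follows essentially the paper's route. It passes to the discrete model of Lemma~\ref{lem:unramified-approximation} and to the inertia field to reach the fierce (hence monogenic) case, where Wewers's quotient and subgroup formulas, or equivalently Serre's computation with a lifted residue generator, give Herbrand's theorem; it then transfers back through the preserved intermediate ramification numbers. The step you call bookkeeping is the identification of the filtration of $M/F$ on $IN/N$ with that of $MF'/F'$ on $I/(I\cap N)$, for $F'=E^I$. This is an unramified base-change invariance, which the paper supplies via \cite[Lemma~3.2]{Wewers2014} on the discrete model, and you should state it explicitly.
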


\begin{proof}
Choose the discrete model supplied by
Lemma~\ref{lem:unramified-approximation}. Passing to its inertia
field makes the extension fierce. The quotient and subgroup formulas for
fierce extensions \cite[Remark~3.1(iv)]{Wewers2014}, together with
invariance under unramified base change
\cite[Lemma~3.2]{Wewers2014}, give the formulas for this model:
elements outside inertia have ramification number zero.
The composition identity follows by integrating the orders of
the filtration groups.
The preservation of all intermediate ramification numbers then
gives the formulas for $E/F$. Passing to finite layers gives the
last assertion.
\end{proof}

\subsection{Ramification growth and period matrices}
\label{sec:ramification-periods}
The arguments in this subsection adapt Sen's classical methods for
ramification in $p$-adic Lie extensions and for studying the Lie
algebra of the image of inertia
\cite{Sen1972,Sen1973}; see also
\cite[\S\S0.4.1 and 3.3]{FontaineOuyang}.
The ramification estimates established in the preceding subsection
allow us to carry out these arguments over fields in $\mathcal C$. 

\begin{lemma}\label{lem:type2-inertia-reduction}
Let $K$ be a type~2 field over $\Cp$, and let $L/K$ be Galois with
compact $p$-adic Lie group $D$ and inertia $I$. Put
$U=L^I$, $F=\widehat U$, and $L'=LF\subseteq\widehat L$. Then
\[
 F\in\mathcal C,\qquad \Gal(L'/F)=I,\qquad \widehat{L'}=\widehat L,
\]
and every finite Galois layer of $L'/F$ is fierce.
\end{lemma}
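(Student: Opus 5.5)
The proof has four parts. First show $F\in\mathcal C$ directly from the definitions. Then identify $\Gal(L'/F)$ with $I$ by a linear-disjointness argument. Next compare completions. Finally read off fierceness of finite layers from Lemma~\ref{lem:inertia-reduction}.

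\emph{Membership in $\mathcal C$.} Since $I$ is the inertia subgroup of $D$, the extension $U=L^I$ is unramified over $K$ with Galois group $D/I$. Closed subgroups and quotients of compact $p$-adic Lie groups are again compact $p$-adic Lie groups, so $D/I$ is one. Hence $F=\widehat U\in\mathcal C$, which is exactly the last assertion of Lemma~\ref{lem:AS-Lie}.

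\emph{The Galois group.} For each finite Galois layer $L_J=L^J$ of $L/U$, I would show that $L_J F/F$ is Galois with group $I/(I\cap J)$ via restriction. Restriction gives an injection $\Gal(L_JF/F)\hookrightarrow\Gal(L_J U/U)$. Surjectivity, i.e. $[L_JF:F]=[L_JU:U]$, is the substantive point; it amounts to linear disjointness of $L_JU$ and $F$ over $U$. Here I would use that $U$ is henselian, being an algebraic extension of the henselian field $K$. By \cite[Theorem~1.9.5(ii)]{TemkinNotes}, as in Lemma~\ref{lem:henselian-approximation}, $U$ is separably closed in $F$. Therefore the minimal polynomial over $U$ of a primitive element of $L_JU$ stays irreducible over $F$: its factors over $F$ have coefficients that are separable algebraic over $U$ and lie in $F$, hence in $U$. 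Passing to the limit over $J$ gives $\Gal(L'/F)\simeq\varprojlim_J I/(I\cap J)=I$.

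\emph{Completions and fierceness.} Since $U\subseteq L'\subseteq\widehat L$ and $L\subseteq L'$, we have $\widehat{L'}=\widehat L$, because $L'$ lies between $L$ and its completion. For fierceness, a finite Galois layer $E=L_JF$ has group $Q=I/(I\cap J)$. By Lemma~\ref{lem:inertia-reduction} its inertia field $F'=E^{I_Q}$ is unramified over $F$ and $E/F'$ is fierce. So it suffices to show that the inertia $I_Q$ of $E/F$ is all of $Q$.

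The main obstacle is this claim that $E/F$ has no nontrivial unramified part. An unramified subextension of $E/F$ corresponds to a separable residue extension of $\kappa(F)=\kappa(U)$. I would descend it to $U$ via the henselian correspondence: finite unramified extensions of $U$ and of $\widehat U$ both correspond to finite separable extensions of $\kappa(U)$, compatibly with $\otimes_U F$. This gives an unramified subextension of $L_JU/U$. But $L_JU\subseteq L$ has group inside $I$, which acts trivially on residue fields only in the sense of inertia, so its maximal unramified subextension over $U$ is trivial. The step needing care is that inertia for $L/K$ (defined via $\mathcal O_{\overline K}$) matches inertia after completion. For this I would use that $\kappa$ and the value group are unchanged by completion, together with the degree equality $[E:F]=[L_JU:U]$ from the previous paragraph. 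Then $E/F$ equals its own inertia subextension, so $F'=F$ and $E/F$ is fierce.
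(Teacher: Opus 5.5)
Your proof is correct, and for $F\in\mathcal C$, $\Gal(L'/F)=I$ and $\widehat{L'}=\widehat L$ it is the paper's argument: the henselian field $U$ is separably closed in $\widehat U$, so finite layers keep their degrees after completion.

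The difference is in the fierceness step. The paper argues directly on residue fields. From $L\subseteq L'\subseteq\widehat L$ one gets $\kappa(L')=\kappa(L)$, and $I$ acts trivially on it. So the group of every finite layer $E/F$ acts trivially on $\kappa(E)$, and the residue extension is purely inseparable. The degree equality of Lemma~\ref{lem:unramified-approximation} then gives fierceness.

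You instead pass through Lemma~\ref{lem:inertia-reduction} and rule out a nontrivial unramified subextension of $E/F$ by descending it to $U$. This works, but your sentence ``this gives an unramified subextension of $L_JU/U$'' needs one more line. Either of the following supplies it:
\begin{itemize}
\item $L_JU$ is algebraically closed in $E=L_JU\otimes_UF$ by the linear disjointness you proved, so the descended field lands in $L_JU$.
\item By the Galois correspondence from your second step, every intermediate field of $E/F$ is $MF$ with $M\subseteq L_JU$. Since $M$ is dense in $MF$, the extension $M/U$ has the same residue field and value group as $MF/F$ and is therefore unramified.
\end{itemize}
An unramified subextension of $L/U$ is unramified over $K$, hence lies in $L^I=U$.

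Once you note $\kappa(E)=\kappa(L_JU)$ by density, the descent is unnecessary. The group $Q=I/(I\cap J)$ acts trivially on this residue field because $I$ is the inertia group, which is exactly the paper's shortcut. The two routes rest on the same facts: the paper's is shorter, while yours makes explicit that inertia before and after completion agree.
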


\begin{proof}
The extension $U/K$ is unramified Galois with compact $p$-adic Lie
group $D/I$. Thus $F\in\mathcal C$, and Lemma~\ref{lem:AS-Lie} gives
\[
 [\kappa(F):\kappa(F)^p]=p,\qquad H^1(\kappa(F),\mathbf F_p)\ne0.
\]
The henselian rank-one field $U$ is separably 
closed in $\widehat U$ by Krasner's lemma. Its finite separable extensions
retain their degrees after completion \cite{TemkinNotes}, giving
$\Gal(L'/F)=I$. The inclusions $L\subseteq L'\subseteq\widehat L$ give
$\widehat{L'}=\widehat L$ and $\kappa(L')=\kappa(L)$. Hence $I$ acts
trivially on $\kappa(L')$. Every finite Galois layer $E/F$ has purely
inseparable residue extension and, by
Lemma~\ref{lem:unramified-approximation},
$[E:F]=[\kappa(E):\kappa(F)]$.
\end{proof}

\begin{lemma}\label{lem:field-fierce-basis}
Let $F\in\mathcal C$ and let $E/F$ be finite Galois with purely
inseparable residue extension. Then $E/F$ is fierce and $E\in\mathcal C$.
If $q=[E:F]$ and $z$ is a unit lifting a residue generator, then
\begin{equation}\label{eq:field-orthogonal-basis}
 \OO_E=\bigoplus_{j=0}^{q-1}\OO_Fz^j,\qquad
 \left|\sum_j a_jz^j\right|=\max_j|a_j|.
\end{equation}
For $\sigma\ne1$ in $Q=\Gal(E/F)$,
\begin{equation}\label{eq:field-inertia-number}
 \|\sigma-1\|=|\sigma z-z|,\qquad i_Q(\sigma)=v(\sigma z-z)>0.
\end{equation}
\end{lemma}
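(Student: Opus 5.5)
Fierceness and membership $E\in\mathcal C$ come straight from Lemma~\ref{lem:unramified-approximation}. That lemma, applied to $E/F$, gives $[E:F]=[\kappa(E):\kappa(F)]$, which together with the hypothesis that the residue extension is purely inseparable is exactly Definition~\ref{def:fierce}. The same lemma states that $\mathcal C$ is closed under finite extensions, so $E\in\mathcal C$.

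For the integral basis \eqref{eq:field-orthogonal-basis}, I would first show that $\kappa(E)=\kappa(F)(\bar z)$ has degree $q$ over $\kappa(F)$. By Lemma~\ref{lem:AS-Lie}, $[\kappa(F):\kappa(F)^p]=p$, so a finite purely inseparable extension of $\kappa(F)$ is simple (it is contained in $\kappa(F)^{1/p^n}=\kappa(F)(s^{1/p^n})$ for a $p$-basis $s$, and subextensions of that chain are linearly ordered). Hence a residue generator $\bar z$ exists and $1,\bar z,\dots,\bar z^{q-1}$ is a $\kappa(F)$-basis of $\kappa(E)$.

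Next I would prove the norm identity $|\sum a_jz^j|=\max_j|a_j|$ by the standard argument. Scale so that $\max|a_j|=1$ and reduce: the image is a nontrivial residue combination of $1,\bar z,\dots,\bar z^{q-1}$, hence nonzero. Since the $q$ elements $z^j$ are linearly independent over $F$ and $[E:F]=q$, they form an $F$-basis. The norm identity then shows that $\OO_E$ consists exactly of the combinations with $a_j\in\OO_F$.

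For \eqref{eq:field-inertia-number}, take $x=\sum a_jz^j\in\OO_E$. Then
\[
\sigma x-x=\sum_j a_j(\sigma(z)^j-z^j),
\]
and each $\sigma(z)^j-z^j$ is divisible by $\sigma z-z$ in $\OO_E$. This gives $|\sigma x-x|\le|\sigma z-z|$ for all $x\in\OO_E$, with equality at $x=z$. Because $v(E^\times)=v(F^\times)$ (the residue extension carries the full degree), one can rescale any $x\in E^\times$ into $\OO_E$ by an element of $F$. Therefore $\|\sigma-1\|=|\sigma z-z|$, and with the definition \eqref{eq:ramification-numbers} this gives $i_Q(\sigma)=v(\sigma z-z)$.

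Positivity follows from the residue action. Every $\sigma\in Q$ acts trivially on $\kappa(E)$ because the residue extension is purely inseparable, so $v(\sigma z-z)>0$. Finally, $\sigma z\ne z$ for $\sigma\neq1$, since $z$ generates $E$ over $F$ (its powers form a basis); hence $i_Q(\sigma)<\infty$. The main obstacle I expect is the simplicity of $\kappa(E)/\kappa(F)$, which is exactly where the $p$-degree-one input from Lemma~\ref{lem:AS-Lie} is used.
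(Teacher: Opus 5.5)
Your proposal is correct and follows essentially the same route as the paper. You get the degree equality and $E\in\mathcal C$ from Lemma~\ref{lem:unramified-approximation}, a single residue generator from $p$-rank one, the residue power basis for \eqref{eq:field-orthogonal-basis}, the divisibility $\sigma(z)^j-z^j\in(\sigma z-z)\OO_E$ for the operator norm, and positivity from $\sigma$ fixing $\bar z$ but not $z$; you simply spell out details (why a purely inseparable extension of a $p$-rank-one field is simple, and finiteness of $i_Q(\sigma)$) that the paper leaves implicit.
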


\begin{proof}
Lemma~\ref{lem:unramified-approximation} gives the degree equality
and $E\in\mathcal C$. The field $\kappa(F)$ has $p$-rank one, so
$\kappa(E)/\kappa(F)$ has a single generator. Its residue power basis
gives \eqref{eq:field-orthogonal-basis}. Since
$|\sigma(z)^j-z^j|\leq|\sigma z-z|$, expansion in this basis bounds
$\|\sigma-1\|$ by $|\sigma z-z|$; evaluation at $z$ gives equality.
The norm lies in $(0,1)$ because $\sigma$ fixes $\bar z$ but not $z$.
\end{proof}

\begin{lemma}\label{lem:field-abelian-breaks}
Let $A$ be the group of a finite fierce abelian $p$-extension over
$F\in\mathcal C$, and put $A(p^m)=\{g:g^{p^m}=1\}$.
If $b(A)\leq H$, then
\begin{align}
 b(A) &\geq p^{m}b\bigl(A/A(p^m)\bigr),\label{eq:field-small-upper}\\
 u(A)&\geq\frac{p^{m-1}(p-1)}{|A(p)|}
 u\bigl(A/A(p^m)\bigr)\qquad(m\geq1).\label{eq:field-small-lower}
\end{align}
If $b(A)>H$, then
\begin{equation}\label{eq:field-large-upper}
 b(A)=b\bigl(A/A(p)\bigr)+1.
\end{equation}
\end{lemma}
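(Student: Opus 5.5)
The plan is to read all three statements off the filtration identities of Proposition~\ref{prop:abelian-power-filtration}, using the quotient formula $(Q/N)[a]=Q[a]N/N$ of Lemma~\ref{lem:classC-Herbrand}. Write $A$ additively. Multiplication by $p^m$ identifies $A/A(p^m)$ with $p^mA$, and it carries $(A/A(p^m))[a]=A[a]+A(p^m)/A(p^m)$ onto $p^mA[a]$. Hence, by \eqref{eq:character-conductor},
\[
 b\bigl(A/A(p^m)\bigr)=\max\{a:\ p^mA[a]\neq0\}.
\]
Throughout, \eqref{eq:herbrand-tail} keeps the top group nonzero at its break, so $A[b(A)]\neq0$ and $A[a]=0$ for $a>b(A)$.

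\emph{Step 1: the bound \eqref{eq:field-small-upper}.} Put $a_0=b(A/A(p^m))$, and assume $a_0>0$ and $m\ge1$ (otherwise there is nothing to prove). Set $a_k=p^ka_0$. I claim by induction on $k\le m$ that $p^kA[a_0]\subseteq A[a_k]$ and that $a_k\le h$ whenever $k<m$. Suppose instead $a_k>h$ for some $k<m$. Then \eqref{eq:A3-high} gives $0\neq p^{k+1}A[a_0]\subseteq A[a_k+1]$, so $b(A)\ge a_k+1>h+1=H$, a contradiction. So $a_k\le h$, and \eqref{eq:A3-low} gives $p^{k+1}A[a_0]\subseteq pA[a_k]\subseteq A[a_{k+1}]$. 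At $k=m$ this yields $0\neq p^mA[a_0]\subseteq A[p^ma_0]$, hence $b(A)\ge p^ma_0$.

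\emph{Step 2: the equality \eqref{eq:field-large-upper}.} Let $b=b(A)>H$, so $b-1>h$. By \eqref{eq:A3-high}, $pA[b-1]=A[b]\neq0$, hence $b(A/A(p))\ge b-1$. Conversely, suppose $pA[a]\neq0$ with $a>b-1$. Then $a>h$, and \eqref{eq:A3-high} gives $A[a+1]\neq0$ with $a+1>b$, which is impossible. So $b(A/A(p))=b-1$.

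\emph{Step 3: the bound \eqref{eq:field-small-lower}.} This is the main obstacle, and I would follow Sen's counting argument. Write $Q'=A/A(p^m)$. By \eqref{eq:herbrand-tail},
\[
 u(A)\ge\int_0^{p^m b(Q')}\frac{da}{|A[a]|}
 =p^m\int_0^{b(Q')}\frac{ds}{|A[p^ms]|},
\]
where the inequality uses Step~1 and the equality is the substitution $a=p^ms$. On the other hand, $u(Q')=\int_0^{b(Q')}ds/|Q'[s]|$ with $|Q'[s]|=|p^mA[s]|$. So it suffices to prove, for almost every $s\le b(Q')$,
\[
 |A[p^ms]|\le\tfrac{p}{p-1}\,|A(p)|\,|p^mA[s]|.
\]
Step~1 only gives the inclusion $p^mA[s]\subseteq A[p^ms]$, so some reverse control is needed. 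I would first bound $|A[p^ms]|\le|A(p)|\cdot|pA[p^ms]|$, since the kernel of multiplication by $p$ on any subgroup lies in $A(p)$. Next I would push $pA[p^ms]$ one level up the filtration using \eqref{eq:A3-low} and \eqref{eq:A3-high}. The leftover factor $p/(p-1)$ should come from summing a geometric series over the slope intervals $p^{k}s<a\le p^{k+1}s$, as in Sen's proof.

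If this pointwise comparison proves too lossy, the fallback is the composition identity $\Phi_{E/F}=\Phi_{M/F}\circ\Phi_{E/M}$ of Lemma~\ref{lem:classC-Herbrand}, with $M$ the fixed field of $A(p^m)$. That identity expresses $u(A)$ through $u(Q')$ and the lower numbering of $A(p^m)$. I would then bound the latter using that $A(p^m)$ has at most $|A(p)|^m$ elements, and that the lower jumps of $A$ cover those of $Q'$ after the Herbrand rescaling.
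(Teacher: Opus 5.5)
Your Steps 1 and 2 are correct. Step 2 is the paper's argument. Step 1 is an equivalent direct induction inside $A$; the paper instead proves $b(A/A(p))\le b(A)/p$ and iterates on the quotient, using $(A/A(p))(p^{m-1})=A(p^m)/A(p)$.

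The gap is Step 3, which you do not carry out, and the reduction you propose there cannot be completed. The pointwise bound $|A[p^ms]|\le\frac{p}{p-1}|A(p)|\,|p^mA[s]|$ does not follow from \eqref{eq:A3-low}, \eqref{eq:A3-high}, $b(A)\le H$, or Wewers's inequalities. Take $A=\mathbb Z/p^4\mathbb Z$ and $m=2$, with $A[a]=p^{i-1}A$ on $(\delta_{i-1},\delta_i]$ and breaks $\epsilon<h/p<h<H$ for small $\epsilon>0$. This filtration satisfies all those constraints, and $b(A/A(p^2))=h/p$. For $s\in(\epsilon,h/p^3]$ you get $A[s]=A[p^2s]=pA$. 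Hence $|A[p^2s]|=p^3$, while $\frac{p}{p-1}|A(p)|\,|p^2A[s]|=p^3/(p-1)$, so the bound fails for $p\ge3$. The mechanism you sketch also runs the wrong way. \eqref{eq:A3-low} only gives $pA[a]\subseteq A[pa]$, which is a lower bound on $A[pa]$, whereas you need an upper bound on $|A[p^ms]|$. Your fallback is too vague to evaluate. Moreover, bounding via $|A(p^m)|\le|A(p)|^m$ would put $|A(p)|^m$, not $|A(p)|$, into the constant.

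The missing idea is that you should not compare integrands at all; two crude bounds suffice. Your Step 1 with $m=1$ says $pA[a]=0$, i.e.\ $A[a]\subseteq A(p)$, for $a>b(A)/p$. Therefore
\[
 u(A)=\int_0^{b(A)}\frac{da}{|A[a]|}\geq\int_{b(A)/p}^{b(A)}\frac{da}{|A(p)|}
 =\frac{(p-1)\,b(A)}{p\,|A(p)|}.
\]
On the other side, $|Q'[s]|\geq1$ gives $u(Q')\leq b(Q')$, and Step 1 gives $b(Q')\leq b(A)/p^m$. Combining the two yields $u(A)\geq p^{m-1}(p-1)\,u(Q')/|A(p)|$, which is exactly how the paper concludes.
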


\begin{proof}
Suppose $b=b(A)\leq H$. For $a>b/p$, Proposition~\ref{prop:abelian-power-filtration}
gives $(A[a])^p=1$: use its first formula if $a\leq h$, and its
second if $a>h$. Thus $A[a]\subseteq A(p)$ and quotient compatibility
gives $b(A/A(p))\leq b/p$. Iterate, using
\[
 \bigl(A/A(p)\bigr)(p^{m-1})=A(p^m)/A(p),
\]
to obtain \eqref{eq:field-small-upper}. Moreover,
\[
 u(A)=\int_0^b\frac{da}{|A[a]|}\geq\frac{b-b/p}{|A(p)|},
 \qquad u\bigl(A/A(p^m)\bigr)\leq b/p^m,
\]
which proves \eqref{eq:field-small-lower}.

If $b(A)>H$, Proposition~\ref{prop:abelian-power-filtration} gives, for $a>h$,
\[
 (A[a])^p\ne1\quad\Longleftrightarrow\quad a+1\leq b(A).
\]
Under $A/A(p)\simeq A^p$, the quotient upper group has image
$(A[a])^p$. Its last break is therefore $b(A)-1>h$.
\end{proof}

\begin{lemma}\label{lem:field-last-lower}
Let $F\in\mathcal{C}$, and let $E/F$ be a finite fierce Galois extension whose Galois group $G$ is a nontrivial $p$-group. Set $u=u(G)$. Then the last lower group of $G$ is elementary abelian, and $0<u\leq h$.
\end{lemma}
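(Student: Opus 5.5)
Let $G_u=\{\sigma:i_G(\sigma)\ge u\}$ be the last nontrivial lower ramification group, where $u=u(G)$. The plan is to show first that $G_u$ is elementary abelian, then that $u>0$, and finally that $u\le h$. The first two steps come from the single-generator description in Lemma~\ref{lem:field-fierce-basis}; the bound $u\le h$ is deduced from the conductor estimates of Proposition~\ref{prop:abelian-power-filtration}, applied to a cyclic subextension of degree $p$.

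\textbf{Structure of $G_u$.} By Lemma~\ref{lem:field-fierce-basis}, choose a unit $z\in\OO_E$ lifting a residue generator. Then $i_G(\sigma)=v(\sigma z-z)$ for every $\sigma\ne1$, and this quantity is positive; hence $u>0$. For $\sigma,\tau\in G_u$ with $\sigma\tau\neq1$ I will use the identity
\[
\sigma\tau z-z=\sigma(\tau z-z)+(\sigma z-z).
\]
It shows $G_u$ is a subgroup, which is already known. For the stronger claims I pass to the quotient. Since $v(\sigma w-w)\ge v(w)+u$ for all $w$, the map
\[
\sigma\mapsto \overline{(\sigma z-z)/\varpi_u}
\]
is a homomorphism $G_u\to\kappa(E)$, where $\varpi_u\in F$ has valuation $u$; such an element exists because the value group of $F$ is divisible. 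The map is injective because $u$ is the maximal ramification number, so every $\sigma\ne1$ in $G_u$ satisfies $v(\sigma z-z)=u$ exactly. Its image is additive in characteristic $p$. Therefore $G_u$ embeds into $(\kappa(E),+)$, and so it is elementary abelian.

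\textbf{The bound $u\le h$.} Pick $\sigma\in G_u$ of order $p$ and set $E_1=E^{\langle\sigma\rangle}$. Then $E/E_1$ is a cyclic fierce extension of degree $p$ over $E_1\in\mathcal C$, by Lemma~\ref{lem:field-fierce-basis}. Its lower ramification number is the same as for $E/F$, namely $i_{\langle\sigma\rangle}(\sigma)=i_G(\sigma)=u$, because lower numbering is compatible with passing to subgroups. For a group of order $p$ we have $b=\Phi(u)=p\cdot u$ up to the term at $\sigma=1$; more precisely $\Phi(u)=pu$. So it suffices to show $b\le pu\le ph=H$, which is exactly the degree-$p$ case of Wewers: the first break satisfies $0<\delta_1\le H$ (Theorem~\ref{thm:wewers}). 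This bound transfers to $E/E_1$ through the discrete model of Lemma~\ref{lem:unramified-approximation}, as was done in the proof of Proposition~\ref{prop:abelian-power-filtration}, where the degree-$p$ bound was recorded. Consequently $pu=b(\langle\sigma\rangle)=\delta_1\le H=p/(p-1)$, which gives $u\le 1/(p-1)=h$.

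\textbf{Main obstacle.} I expect the delicate point to be the transfer of Wewers's degree-$p$ bound to the extension $E/E_1$. This requires checking that $E_1$ again lies in $\mathcal C$ (guaranteed by the closure under finite extensions in Lemma~\ref{lem:unramified-approximation}) and that the discrete model preserves the ramification number $u$ (guaranteed by \eqref{eq:discrete-exact-numbers}).
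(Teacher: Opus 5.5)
Your proof is correct, but both halves take a different route from the paper.

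\textbf{Elementary abelian.} The paper proves operator-norm estimates: $\|\sigma^p-1\|\le\max\{p^{-1}\|\sigma-1\|,\|\sigma-1\|^p\}$ and $\|[\sigma,\tau]-1\|\le\|\sigma-1\|\,\|\tau-1\|$. From these it gets $i(\sigma^p)>i(\sigma)$ and $i([\sigma,\tau])\ge i(\sigma)+i(\tau)$, so $p$-th powers and commutators in the last lower group must be trivial. Your embedding $G_u\hookrightarrow(\kappa(E),+)$, $\sigma\mapsto\overline{(\sigma z-z)/\varpi_u}$, is the classical Serre-style argument. It needs only $v(\sigma w-w)\ge v(w)+i(\sigma)$, $u>0$ and the maximality of $u$, so it is if anything shorter.

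\textbf{The bound $u\le h$.} Here the difference is more substantial. The paper argues directly by Kummer theory. Since $\mu_{p^\infty}\subset\Cp\subset F$, an element $\sigma$ of order $p^r$ acting $F$-linearly on $E$ has an eigenvector $w$ whose eigenvalue is a primitive $p^r$-th root of unity $\zeta$. Hence $p^{-i(\sigma)}=\|\sigma-1\|\ge|\zeta-1|$, i.e.\ $i(\sigma)\le v(\zeta-1)=1/(p^{r-1}(p-1))\le h$. This is self-contained and even gives the sharper bound for elements of order $p^r$.

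Your route goes through the degree-$p$ case of Theorem~\ref{thm:wewers}. It needs $E^{\langle\sigma\rangle}\in\mathcal C$ and the discrete model of Lemma~\ref{lem:unramified-approximation}, with $b=\Phi(u)=pu=\delta_1\le H$. This is legitimate: there is no circularity, since neither Lemma~\ref{lem:unramified-approximation} nor Proposition~\ref{prop:abelian-power-filtration} uses the present lemma. The cost is that an elementary bound now rests on the heaviest inputs of the section.

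\textbf{A small fix.} Lemma~\ref{lem:field-fierce-basis} gives $E^{\langle\sigma\rangle}\in\mathcal C$ only when $E^{\langle\sigma\rangle}/F$ is Galois. You can arrange this by choosing $\sigma\in G_u\cap Z(G)$. That intersection is nontrivial because $G_u$ is a nontrivial normal subgroup of a $p$-group. This avoids leaning on the more loosely argued closure statement at the end of Lemma~\ref{lem:unramified-approximation}.
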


\begin{proof}
Write $\Delta_\sigma=\sigma-1$ for any $\sigma\in G$. The binomial and commutator identities give
\[
 \|\Delta_{\sigma^p}\|\leq
 \max\{p^{-1}\|\Delta_\sigma\|,\|\Delta_\sigma\|^p\},\qquad
 \|\Delta_{[\sigma,\tau]}\|\leq\|\Delta_\sigma\|\|\Delta_\tau\|.
\]
Together with \eqref{eq:field-inertia-number}, these imply
\begin{align}
 i(\sigma^p)&\geq\min\{i(\sigma)+1,p\,i(\sigma)\}>i(\sigma)
 \quad(\sigma\ne1),\label{eq:field-operator-power}\\
 i([\sigma,\tau])&\geq i(\sigma)+i(\tau).
 \label{eq:field-operator-commutator}
\end{align}
In the last lower group, all nonidentity ramification numbers equal $u>0$;
its $p$-th powers and commutators must therefore be trivial.
If $\sigma$ has order $p^r$, its linear action has an eigenvector $w\ne0$
with eigenvalue a primitive $p^r$-th root $\zeta\in\Cp$. Hence
\[
 p^{-i(\sigma)}\geq\frac{|\sigma w-w|}{|w|}=|\zeta-1|,
 \qquad i(\sigma)\leq\frac1{p^{r-1}(p-1)}\leq h.
\]
This implies that $u\leq h$. Since the extension is fierce, we deduce that $0<u\leq h$.
\end{proof}

Let $L_\infty/F_0$ be a Galois tower with $F_0\in\mathcal C$ such that 
all finite Galois layers are fierce. We further assume that $G:=\Gal(L_\infty/F_0)$ is a $p$-adic Lie group and
\begin{equation}\label{eq:field-uniform-setup}
 G=\exp(\mathfrak l),\qquad [\mathfrak l,\mathfrak l]\subseteq p^3\mathfrak l,
 \qquad d=\operatorname{rank}_{\Zp}\mathfrak l>0,
\end{equation}
where $\mathfrak l$ is a $\Zp$-Lie lattice in $\Lie G$.
Set $G(n)=\exp(p^n\mathfrak l)$, $F_n=L_\infty^{G(n)}$,
$Q_n=G/G(n)$, $b_n=b(Q_n)$, $u_n=u(Q_n)$, and $b_0=u_0=0$.
Each $F_n$ is complete and belongs to $\mathcal C$.

\begin{lemma}\label{lem:field-relative-breaks}
One has $b_m>b_{m-1}$ and $0<u_m\leq h$ for $m\geq1$.
For $m>n$, give $A_{n,m}=G(n)/G(m)$ its numbering relative to $F_n$.
Then
\begin{align}
 u(A_{n,m})&=u_m,\label{eq:field-relative-lower}\\
 b(A_{n,m})&=u_n+b_m-b_n.\label{eq:field-relative-upper}
\end{align}
\end{lemma}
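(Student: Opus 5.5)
The plan is to deduce everything from the lower-numbering quantities $i_Q(\sigma)$ and their invariance under passage to subgroups, using the Herbrand formalism of Lemma~\ref{lem:classC-Herbrand}.

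\emph{Step 1: the relative lower number.} Lower numbering is compatible with subgroups. For $\sigma\in G(n)/G(m)\subseteq Q_m$, the number $i(\sigma)$ computed for $L_m/F_n$ equals the one computed for $L_m/F_0$, where $L_m=F_m$ is the top field, because both are defined by $v(\sigma z-z)$ on the same field $F_m$. Hence $u(A_{n,m})$ is the maximum of $i_{Q_m}(\sigma)$ over the nonidentity elements $\sigma\in G(n)/G(m)$. I would then show that the maximum of $i_{Q_m}$ over all of $Q_m\setminus\{1\}$ is already attained inside $G(m-1)/G(m)\subseteq G(n)/G(m)$. By Lemma~\ref{lem:field-last-lower}, the last lower group of $Q_m$ is elementary abelian and nontrivial.

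\emph{Step 2: locating the last lower group.} The uniform structure \eqref{eq:field-uniform-setup} says that the $p$-power map is $\exp(x)\mapsto\exp(px)$, so the elements of order $p$ in $Q_m$ are exactly the nonidentity elements of $G(m-1)/G(m)$. Indeed, $\exp(x)^p\in G(m)$ forces $x\in p^{m-1}\mathfrak l$. The last lower group therefore lies in $G(m-1)/G(m)$, and since $m-1\ge n$ it lies in $A_{n,m}$. This gives \eqref{eq:field-relative-lower}: $u(A_{n,m})=u(Q_m)=u_m$. The bound $0<u_m\le h$ is Lemma~\ref{lem:field-last-lower}, since $Q_m$ is a nontrivial $p$-group and fierce by hypothesis.

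\emph{Step 3: the relative upper number.} To get \eqref{eq:field-relative-upper}, I would use transitivity $\Phi_{F_m/F_0}=\Phi_{F_n/F_0}\circ\Phi_{F_m/F_n}$ from Lemma~\ref{lem:classC-Herbrand}. This gives
\[
b_m=\Phi_{Q_m}(u_m)=\Phi_{Q_n}\bigl(\Phi_{A_{n,m}}(u_m)\bigr)=\Phi_{Q_n}\bigl(b(A_{n,m})\bigr).
\]
If $b(A_{n,m})\ge u_n$, then \eqref{eq:herbrand-tail} gives $\Phi_{Q_n}(t)=b_n+t-u_n$ for $t\ge u_n$. Substituting yields $b_m=b_n+b(A_{n,m})-u_n$, which is \eqref{eq:field-relative-upper}.

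\emph{Step 4: the main obstacle.} The hardest point is proving $b(A_{n,m})\ge u_n$, equivalently $b_m\ge b_n$, and the strict monotonicity $b_m>b_{m-1}$. Monotonicity follows because the upper numbering is compatible with quotients. The group $Q_{m-1}=Q_m/(G(m-1)/G(m))$ is a quotient, so $b_{m-1}=\max\{a: Q_m[a]\not\subseteq G(m-1)/G(m)\}\le b_m$.

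For strictness, Step 2 shows that the last upper group $Q_m[b_m]$, which equals the last lower group, lies in the kernel $G(m-1)/G(m)$. By \eqref{eq:herbrand-tail} the upper filtration is left-continuous with jumps only at the breaks, so the image of $Q_m[a]$ in $Q_{m-1}$ vanishes for $a$ slightly below $b_m$ as well. This forces $b_{m-1}<b_m$.

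Then $b(A_{n,m})=\Psi_{Q_n}(b_m)$ and $b_m>b_n=\Phi_{Q_n}(u_n)$ give $b(A_{n,m})>u_n$, which closes the argument. The one point to check carefully is that "last lower group $\subseteq$ kernel" implies the upper groups just below $b_m$ also lie in the kernel, since the intervals are half-open.
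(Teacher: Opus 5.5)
Your proposal is correct and follows essentially the same route as the paper: you place the elementary abelian last lower group of $Q_m$ inside $G(m-1)/G(m)$ via $(\exp x)^p=\exp(px)$, use invariance of lower numbers under restriction to get \eqref{eq:field-relative-lower}, and combine Herbrand composition with the tail formula \eqref{eq:herbrand-tail} to get \eqref{eq:field-relative-upper}. The paper gets $b_{m-1}<b_m$ more directly by applying quotient compatibility at $b_m$ itself, so that $Q_{m-1}[b_m]=1$ while $Q_{m-1}[b_{m-1}]\neq1$; this makes your left-continuity check unnecessary, and the paper leaves implicit the inequality $b_m\ge b_n$ that you correctly note is needed for the tail formula.
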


\begin{proof}
Since $(\exp X)^p=\exp(pX)$, the elements of $Q_m$ killed by $p$
are exactly $G(m-1)/G(m)$. Lemma~\ref{lem:field-last-lower} places
the last lower group there. Its image in $Q_{m-1}$ is trivial, so
quotient compatibility at $b_m$ gives $b_{m-1}<b_m$.
The same lemma bounds $u_m$.
The last lower group also lies in $A_{n,m}$; restriction preserves lower
numbers, proving \eqref{eq:field-relative-lower}.
For $\Psi_n=\Phi_{F_n/F_0}^{-1}$, Herbrand composition gives
\[
 b(A_{n,m})=\Psi_n\bigl(\Phi_{F_m/F_0}(u_m)\bigr)
 =\Psi_n(b_m)=u_n+b_m-b_n,
\]
where the last equality uses \eqref{eq:herbrand-tail}.
\end{proof}

\begin{proposition}[Eventual linear growth of upper breaks]\label{prop:linear-break-growth}
Under \eqref{eq:field-uniform-setup}, $b_n=n+c$ for some
$c\in\mathbb R$ and all sufficiently large $n$.
\end{proposition}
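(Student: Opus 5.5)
The plan is to adapt Sen's classical argument for linear growth of upper breaks in a uniform $p$-adic Lie tower, using the abelian relative extensions $A_{n,m}=G(n)/G(m)$ over $F_n$. The hypothesis $[\mathfrak l,\mathfrak l]\subseteq p^3\mathfrak l$ makes $G(n)/G(n+k)$ abelian for $k\le n+3$. More precisely, $\exp$ identifies $G(n)/G(m)$ with $p^n\mathfrak l/p^m\mathfrak l\simeq(\Z/p^{m-n})^d$ whenever $m\le 2n+3$. So the relative extensions $F_m/F_n$ in this range are fierce abelian $p$-extensions over $F_n\in\mathcal C$, and Lemma~\ref{lem:field-abelian-breaks} applies to them. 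Here $A(p^k)=G(m-k)/G(m)$ and $A/A(p^k)\simeq A_{n,m-k}$.

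\textbf{Step 1: once some break exceeds $H$, the growth is exactly linear.} Take $A=A_{n,m}$ with $m=n+1$ or $m=n+2$. By \eqref{eq:field-relative-upper}, $b(A_{n,m})=u_n+b_m-b_n$. Suppose $b(A_{n,n+2})>H$. Then \eqref{eq:field-large-upper} gives
\[
 b(A_{n,n+2})=b(A_{n,n+1})+1,
\]
and substituting \eqref{eq:field-relative-upper} on both sides yields $b_{n+2}=b_{n+1}+1$. Thus, once relative breaks stay above $H$, the sequence $b_n$ increases by exactly $1$ at each step. The remaining task is to show that $b(A_{n,n+2})>H$ for all large $n$, and that this persists.

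\textbf{Step 2: the relative breaks must eventually exceed $H$.} Suppose instead that $b(A_{n,2n})\le H$ for infinitely many $n$. Apply \eqref{eq:field-small-lower} to $A=A_{n,2n}$ with $m=n-1$, noting $|A(p)|=p^d$. Since $u(A)=u_{2n}\le h$ by Lemma~\ref{lem:field-relative-breaks}, this gives
\[
 h\ \ge\ \frac{p^{n-2}(p-1)}{p^d}\,u(A_{n,n+1})=\frac{p^{n-2}(p-1)}{p^d}\,u_{n+1}.
\]
So $u_{n+1}\le C p^{-n}$, and the lower breaks would decay geometrically. I would rule this out by a lower bound on the $u_n$. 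The idea is to use \eqref{eq:field-operator-power} on the elements of $G(n)$, whose $p$-th powers generate $G(n+1)$. Iterating $i(\sigma^p)\ge\min\{i(\sigma)+1,p\,i(\sigma)\}$ along $\sigma\in G(0)\setminus G(1)$ and its powers $\sigma^{p^n}$ bounds $i(\sigma^{p^n})$ below by $\min\{p^n i(\sigma),\dots\}$, which is more than the claimed decay allows. Combined with \eqref{eq:field-relative-upper}, this forces $b(A_{n,n+2})>H$ for large $n$. Alternatively, since the $u_n$ are exactly the values $i(\sigma)$ attained on $G(n-1)\setminus G(n)$, the geometric decay contradicts the positivity of these values at a fixed level: \eqref{eq:field-operator-power} shows that ramification numbers strictly increase under $p$-th powers. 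Once $b(A_{n,n+2})>H$ holds at some $n_0$, it propagates to all later $n$ because each step adds $1$ to the break.

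\textbf{Step 3: conclude.} For $n\ge n_0$ we get $b_{n+1}=b_n+1$, so $b_n=n+c$ with $c=b_{n_0}-n_0$.

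\textbf{Main obstacle.} I expect Step 2 to be the hard part, specifically the non-degeneracy statement that relative breaks eventually exceed $H$. This is where Sen uses the ramification of the base field. Here the natural substitute is the strict increase \eqref{eq:field-operator-power} together with the bound $u_m\le h$ from Lemma~\ref{lem:field-last-lower}. The exponents need care to match: I would choose $m$ in \eqref{eq:field-small-lower} so that the factor $p^{m-1}$ beats $|A(p)|=p^d$.
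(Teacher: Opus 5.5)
Your Step~1 is correct, and so is the inequality $u_{n+1}\le Cp^{-n}$ that you extract from \eqref{eq:field-small-lower} applied to $A_{n,2n}$. The gap is in how Step~2 tries to turn this into a contradiction.

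You look for a lower bound on $u_{n+1}$ by iterating \eqref{eq:field-operator-power} from some $\sigma\in G\setminus G(1)$. But $u_{n+1}$ and $i(\sigma^{p^n})$ are lower ramification numbers in $Q_{n+1}$ over $F_0$, so they are measured on the growing field $F_{n+1}$. For fixed $\sigma$, $i_{Q_{n+1}}(\sigma)$ is nonincreasing in $n$. In fact your own iteration, together with $i(\sigma^{p^n})\le u_{n+1}\le h$, forces $i_{Q_{n+1}}(\sigma)\le hp^{-n}$. So the bound $u_{n+1}\ge p^n\,i_{Q_{n+1}}(\sigma)$ is perfectly compatible with $u_{n+1}\le Cp^{-n}$. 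Positivity of $u_n$ at each level says nothing about $\inf_n u_n$. No uniform lower bound on $u_n$ is available, and the contradiction has to come from the \emph{upper} bound $u_m\le h$ of Lemma~\ref{lem:field-relative-breaks}, via iteration.

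Your contradiction hypothesis is also not the negation of what you need. Since $b(A_{n,m})=u_n+b_m-b_n$ increases with $m$, showing $b(A_{n,2n})>H$ does not give $b(A_{n,n+2})>H$. Moreover, a hypothesis holding only for infinitely many $n$ cannot be iterated.

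The propagation claim ``because each step adds $1$'' is unjustified as well. From $b_{n+2}=b_{n+1}+1$ alone you only get $b(A_{n+1,n+3})=1+u_{n+1}+(b_{n+3}-b_{n+2})$, which need not exceed $H=1+h$. The missing input is \eqref{eq:field-small-upper}. A unit step $b_{N+1}=b_N+1$ gives $b(A_{N,N+1})=u_N+1>h$, whereas $b(A_{N,N+2})\le H$ would force $b(A_{N,N+1})\le H/p=h$. Hence $b(A_{N,N+2})>H$, and \eqref{eq:field-large-upper} gives $b_{N+2}=b_{N+1}+1$. So a single unit step suffices.

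To obtain one, argue as the paper does. Suppose $b(G(n)/G(n+r))\le H$ for \emph{all} large $n$, with fixed $r=d+3$. Then \eqref{eq:field-small-lower} with $m=r-1$ gives $u_{n+r}\ge p(p-1)u_{n+1}$. Iterating along a progression of step $r-1$ makes $u$ unbounded, contradicting $u_m\le h$. Your window $A_{n,2n}$ would also work under the hypothesis ``for all large $n$'', by iterating $u_{2m-2}\ge p^{m-3-d}(p-1)u_m$ along $m\mapsto 2m-2$. A large $n$ with $b(G(n)/G(n+r))>H$ then yields $b_{N+1}=b_N+1$ for $N=n+r-1$, and induction finishes.
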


\begin{proof}
The bracket condition and the Baker--Campbell--Hausdorff formula give
\begin{equation}\label{eq:field-abelian-window}
 G(n)/G(n+r)\simeq p^n\mathfrak l/p^{n+r}\mathfrak l
 \simeq(\mathbb Z/p^r\mathbb Z)^d\qquad(r\leq n+1);
\end{equation}
see \cite{Lazard1965}.
Fix $r=d+3$. We claim that there are arbitrarily large $n$ for which
$b(G(n)/G(n+r))>H$. Otherwise \eqref{eq:field-small-lower}, with
$m=r-1$, would give
\[
 u_{n+r}\geq p^{r-2-d}(p-1)u_{n+1}=p(p-1)u_{n+1},
\]
since $|A_n(p)|=p^d$ and $A_n/A_n(p^{r-1})=G(n)/G(n+1)$
for $A_n=G(n)/G(n+r)$. Iterating along a progression of step $r-1$
contradicts $0<u_j$ and the bound $u_m\leq h$.

Choose such a sufficiently large $n$. Equation~\eqref{eq:field-large-upper} gives
\[
 b(G(n)/G(n+r))=b(G(n)/G(n+r-1))+1.
\]
By \eqref{eq:field-relative-upper}, this yields, for $N=n+r-1$,
\begin{equation}\label{eq:field-unit-step}
 b_{N+1}=b_N+1.
\end{equation}
Now $B=G(N)/G(N+2)\simeq(\mathbb Z/p^2\mathbb Z)^d$ cannot satisfy
$b(B)\leq H$: otherwise \eqref{eq:field-small-upper} would imply
\[
 b(B/B(p))\leq H/p=h,
 \qquad b(B/B(p))=u_N+b_{N+1}-b_N=u_N+1>h.
\]
The strict inequality uses $h=\frac{1}{p-1}\leq 1$ and $u_N>0$.
Thus \eqref{eq:field-large-upper} applies to $B$ and gives
$b_{N+2}=b_{N+1}+1$. Induction proves the assertion.
\end{proof}

\begin{lemma}\label{lem:field-trace-norm}
Let $E/F$ be a nontrivial finite fierce Galois extension over
$F\in\mathcal C$, of degree $N$. For the minimal polynomial $P$
of a unit residue generator $z$, put $\mathfrak d(E/F)=v(P'(z))$.
Then
\begin{equation}\label{eq:field-different-break}
 \mathfrak d(E/F)=\sum_{\sigma\ne1}i(\sigma)=b(E/F)-u(E/F),
 \qquad \|\Tr_{E/F}\|=p^{-\mathfrak d(E/F)}.
\end{equation}
If $N=p^q$ and $T=p^{-q}\Tr_{E/F}$, then
\begin{equation}\label{eq:field-normalized-trace}
 \|T\|=p^{q-\mathfrak d(E/F)},\qquad
 v(Tw)\geq v(w)-q+\mathfrak d(E/F)\quad(w\in E).
\end{equation}
\end{lemma}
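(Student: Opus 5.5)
The plan is to work with the monogenic integral basis from Lemma~\ref{lem:field-fierce-basis}. Write $\OO_E=\OO_F[z]$ with $z$ a unit lifting a residue generator, and let $P$ be the minimal polynomial of $z$ over $F$, of degree $N=[E:F]$. Since $\OO_E=\OO_F[z]$ is free with power basis, the classical Euler formula applies: the dual basis of $1,z,\ldots,z^{N-1}$ under the trace pairing is given by the coefficients of $P(X)/((X-z)P'(z))$. Hence the inverse different is $P'(z)^{-1}\OO_E$. This yields two things. First, $\Tr_{E/F}(\OO_E)$ is the set of $a\in F$ with $a\in\Tr(\OO_E)$, and one checks $\Tr_{E/F}(P'(z)^{-1}\OO_E)=\OO_F$. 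Second, $\|\Tr_{E/F}\|$ is governed by $|P'(z)|$.

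To get the exact norm I would argue as follows. The dual basis elements $e_j^\ast$ satisfy $\Tr(z^ie_j^\ast)=\delta_{ij}$, and they lie in $P'(z)^{-1}\OO_E$. Because $\OO_E$ is spanned by the orthonormal basis $z^j$ with the max-norm \eqref{eq:field-orthogonal-basis}, one has $\|\Tr\|=\sup_{w\neq0}|\Tr w|/|w|$. Taking $w=P'(z)^{-1}z^{N-1}$ gives $\Tr(w)=1$, since the leading coefficient of $P(X)/(X-z)$ is $1$. So $\|\Tr\|\ge|P'(z)|$.

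Conversely, for $w=\sum a_jz^j$ one has $\Tr w=\sum a_j\Tr(z^j)$. Writing $\OO_E\subseteq P'(z)^{-1}\OO_E$ and using $\Tr(P'(z)^{-1}\OO_E)\subseteq\OO_F$ gives $|\Tr w|\le|P'(z)|\,|w|$. Here I use that the value group of $F$ is divisible and that fierceness gives $v(E^\times)=v(F^\times)$, so any $w$ can be scaled to a unit by an element of $F$. This proves $\|\Tr\|=p^{-\mathfrak d}$.

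For the identity $\mathfrak d=\sum_{\sigma\ne1}i(\sigma)$, I would expand $P'(z)=\prod_{\sigma\ne1}(z-\sigma z)$ and apply \eqref{eq:field-inertia-number}, which says $v(\sigma z-z)=i(\sigma)$. The equality $\sum_{\sigma\neq1}i(\sigma)=b-u$ is pure Herbrand bookkeeping. By \eqref{eq:herbrand}, $\Phi_Q(u)=\sum_\sigma\min\{i(\sigma),u\}$. Every $i(\sigma)$ with $\sigma\ne1$ is at most $u$, and the term for $\sigma=1$ contributes $u$. Hence $b=\Phi_Q(u)=u+\sum_{\sigma\ne1}i(\sigma)$.

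The normalized statement \eqref{eq:field-normalized-trace} is then immediate. Since $|p^{-q}|=p^{q}$, we get $\|T\|=p^{q}\|\Tr\|$, and the valuation inequality is just a restatement of the operator norm bound.

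The main obstacle I expect is justifying that the Dedekind-type duality argument works over non-discretely valued $\OO_F$. I would lean on the freeness in \eqref{eq:field-orthogonal-basis}, which makes it purely linear algebra over $\OO_F$ with no noetherian input needed. The only other point to check is the scaling of arbitrary $w$ to the unit ball.
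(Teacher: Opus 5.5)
Your proposal is essentially the paper's proof: the trace dual $P'(z)^{-1}\OO_E$ of the monogenic order $\OO_F[z]$, the element $z^{N-1}/P'(z)$ of trace $1$ for the lower bound, the factorization $P'(z)=\prod_{\sigma\ne1}(z-\sigma z)$ combined with $v(\sigma z-z)=i(\sigma)$, and $\Phi_Q(u)=u+\sum_{\sigma\ne1}i(\sigma)$. There is one slip in the upper bound. The inclusion $\OO_E\subseteq P'(z)^{-1}\OO_E$ only gives $\Tr(\OO_E)\subseteq\OO_F$, and scaling $w$ to a unit does not improve this. Instead, as in the paper, use $v(E^\times)=v(F^\times)$ to choose $a\in F$ with $v(a)=\mathfrak d(E/F)$. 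Then $x/a\in P'(z)^{-1}\OO_E$ for $x\in\OO_E$, so $\Tr(x)=a\Tr(x/a)\in a\OO_F$.
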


\begin{proof}
The identity $P'(z)=\prod_{\sigma\ne1}(z-\sigma z)$ gives the sum,
and $\Phi(u)=u+\sum_{\sigma\ne1}i(\sigma)$ gives $b=\mathfrak d+u$.
The trace-dual of $\OO_E=\OO_F[z]$ is $P'(z)^{-1}\OO_E$.
Choose $a\in F^\times$ with $v(a)=\mathfrak d$, using
$v(E^\times)=v(F^\times)$. For $x\in\OO_E$, the element $x/a$
belongs to the trace-dual, so $v(\Tr(x))\geq\mathfrak d$.
Conversely,
\[
 \Tr_{E/F}\left(\frac{z^{N-1}}{P'(z)}\right)=1
\]
exhibits the unit $az^{N-1}/P'(z)$ with trace $a$.
This proves the norm equality and its normalized form.
\end{proof}

\begin{lemma}\label{lem:field-ax-sen}
Fix $C_{\mathrm{Ax}}>p/(p-1)^2$. For a complete rank-one valued field $F$ such that $\operatorname{char}(F)=0$, if $x\in\widehat{\overline F}$ satisfies
$v(\sigma x-x)\geq m$ for all $\sigma\in G_F$, then there exists some $y\in F$
suth that  $v(x-y)\geq m-C_{\mathrm{Ax}}$.
If $L/F$ is algebraic Galois and $N\subseteq\Gal(L/F)$ is closed, then
$(\widehat L)^N=\widehat{L^N}$.
\end{lemma}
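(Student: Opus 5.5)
This is the classical Ax–Sen–Tate statement: an approximate invariant can be approximated by an element of the base, and completion commutes with taking invariants. I would prove it by reducing to finite Galois extensions, where a normalized trace gives the approximation, and then pass to the limit.

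\emph{Finite-level approximation.} Fix a finite Galois extension $E/F$ inside $\overline F$ containing an approximation $x'$ of $x$. Set $m'=\min(m,v(x-x'))$. Then $v(\sigma x'-x')\geq m'$ for all $\sigma\in\Gal(E/F)$. Choose $z\in\OO_E$ with $v(\Tr_{E/F}z)$ minimal; Tate's estimate for the different says this minimum can be made at most $C_{\mathrm{Ax}}$ up to an arbitrarily small error. I expect this to be the main step. I would follow the standard Ax argument: reduce to a tower of degree-$p$ steps and to the roots of a single polynomial, and use the root bound $\sum_{k\ge1}\frac{1}{p^{k-1}(p-1)}\cdot\frac{1}{p}$-type geometric series. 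This gives the constant $p/(p-1)^2$, and taking $C_{\mathrm{Ax}}$ strictly larger absorbs the approximation errors. Alternatively I would cite \cite[Proposition 3.8]{FontaineOuyang} or Ax's original lemma, which hold for an arbitrary complete rank-one field of characteristic $0$ by the same polynomial-root argument.

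\emph{Trace formula and approximation of $x$.} With $z$ as above, put
\[
y'=\frac{\Tr_{E/F}(zx')}{\Tr_{E/F}(z)}=x'+\frac{\sum_\sigma \sigma(z)(\sigma x'-x')}{\Tr_{E/F}(z)} .
\]
Then $y'\in F$ and $v(y'-x')\ge m'-C_{\mathrm{Ax}}$. Letting $x'\to x$, we get $m'\to m$. Since $F$ is complete, we may take $y=y'$ for $x'$ close enough to $x$, with the strict margin in $C_{\mathrm{Ax}}$ absorbing the loss. Note that only $\mathrm{char}\,F=0$ is needed here, so that $\Tr$ is not identically zero and the extensions are separable.

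\emph{Second assertion.} The inclusion $\widehat{L^N}\subseteq(\widehat L)^N$ is clear by continuity. For the converse, let $x\in(\widehat L)^N$. Apply the first part over the complete field $F'=\widehat{L^N}$. By Krasner's lemma, $L F'$ is dense in $\widehat L$, and restriction identifies $\Gal(\overline{F'}/\,\cdot\,)$ acting on $\widehat L$ with $N$. Thus every $\sigma\in G_{F'}$ acts on $\widehat L\subseteq\widehat{\overline{F'}}$ through an element of $N$ and fixes $x$. So $v(\sigma x-x)=\infty\ge m$ for all $m$, hence $x$ is within $p^{-(m-C_{\mathrm{Ax}})}$ of $F'$ for every $m$, and therefore $x\in F'$ since $F'$ is closed.
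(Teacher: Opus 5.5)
\textbf{There is a genuine gap in your finite-level step.} You claim that $\min_{z\in\OO_E} v(\Tr_{E/F}z)$ can be made at most $C_{\mathrm{Ax}}$. This is false.

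\begin{itemize}
\item Your weighted average only yields $v(y'-x')\geq m'-\bigl(v(\Tr_{E/F}z)-v(z)\bigr)$. The infimum of the loss $v(\Tr_{E/F}z)-v(z)$ over $z$ is essentially the valuation of the different of $E/F$; compare $\|\Tr_{E/F}\|=p^{-\mathfrak d(E/F)}$ in Lemma~\ref{lem:field-trace-norm}.
\item This different is unbounded. For $F=\Qp$ and $E=\Qp(\zeta_{p^n})$ one has $\Tr_{E/F}(\OO_E)=p^{n-1}\Zp$.
\item You cannot choose $E$ to avoid this. $E$ must contain $x'$, and the different is multiplicative in towers, so $\mathfrak d(E/F)\geq\mathfrak d(F(x')/F)$.
\item Normalizing the trace does not help. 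The map $[E:F]^{-1}\Tr_{E/F}$ loses $v([E:F])$, which is unbounded already for unramified extensions of degree $p^q$.
\item Tate's normalized-trace estimates concern deeply ramified towers such as $\Zp$-extensions. They do not give a bound that is uniform over all finite extensions of $F$.
\end{itemize}

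So no choice of $z$ and no normalization makes the trace formula give a uniform constant.

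The constant $p/(p-1)^2=\sum_{k\geq1}\frac{1}{p^{k-1}(p-1)}$ comes from Ax's argument on the minimal polynomial of $x'$. One repeatedly passes to a root of a suitable derivative, of smaller degree, and the losses decrease geometrically. No traces are involved.

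\textbf{Fix.} Drop the trace formula. Your fallback citation works, but only as a replacement for that step, not as a justification of the trace bound. Choose algebraic $x'$ with $v(x-x')\geq m$. Then $v(\sigma x'-x')\geq m$ for all $\sigma\in G_F$. Apply Ax's lemma \cite[Proposition~3.3]{FontaineOuyang} to $x'$ to get $y\in F$ with $v(x'-y)\geq m-p/(p-1)^2$, hence $v(x-y)\geq m-C_{\mathrm{Ax}}$. This is exactly the paper's proof. Note that the approximation statement is Proposition~3.3 of Fontaine--Ouyang, not Proposition~3.8, which is the invariant statement.

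\textbf{Second assertion.} Your argument is correct, given the first part. You apply it over $F'=\widehat{L^N}$, observing that $G_{F'}$ acts on $\widehat L=\widehat{LF'}$ through elements of $N$. This is equivalent to the paper's route, which applies \cite[Proposition~3.8]{FontaineOuyang} to the inverse image of $N$ in $G_F$.
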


\begin{proof}
Apply the Ax--Sen approximation and invariant theorems
\cite[Propositions~3.3 and~3.8]{FontaineOuyang}.
The approximation statement extends from algebraic elements to their
completion by choosing an algebraic $x'$ with $v(x-x')>m$.
For the invariant statement, use the inverse image of $N$ in $G_F$.
\end{proof}

\begin{lemma}\label{lem:bounded-coboundary-conductor}
Let $L_\infty/F$ be a Galois tower over $F\in\mathcal C$ with
Galois group $V$ such that all finite Galois layers are fierce. Suppose
$\lambda:V\to\Qp$ is a continuous map and $x\in\widehat{L_\infty}$ satisfies
\begin{equation}\label{eq:field-approximate-coboundary}
 v\bigl(\lambda(\sigma)-(\sigma-1)x\bigr)\geq m
 \qquad(\sigma\in V)
\end{equation}
for an integer $m$. Then $\chi=p^{-m}\lambda\bmod\Zp$ is a finite-image
continuous character. If $\chi\ne0$ and $E=L_\infty^{\ker\chi}$, then
\begin{equation}\label{eq:field-bounded-conductor}
 \mathfrak d(E/F)\leq C_{\mathrm{Ax}},\qquad
 c(\chi)=b(E/F)\leq C_{\mathrm{Ax}}+h.
\end{equation}
\end{lemma}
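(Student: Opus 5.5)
The argument has three parts: show that $\lambda$ is additive modulo $p^m$, bound the different of $E/F$ by applying Ax--Sen to a trace-normalized $x$, and convert the different bound into a conductor bound via the lower break.

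\textbf{Finite-image character.} For $\sigma,\tau\in V$, compare the cocycle-type expression
\[
\lambda(\sigma\tau)-\lambda(\sigma)-\lambda(\tau)
\]
with
\[
(\sigma\tau-1)x-(\sigma-1)x-(\tau-1)x=(\sigma-1)(\tau-1)x .
\]
Write $\varepsilon(\sigma)=\lambda(\sigma)-(\sigma-1)x$. Then $\sigma$ applied to $\lambda(\tau)$ returns $\lambda(\tau)$, since $\lambda(\tau)\in\Qp$. One gets
\[
\lambda(\sigma\tau)=\lambda(\sigma)+\sigma(\lambda(\tau))+\varepsilon(\sigma\tau)-\varepsilon(\sigma)-\sigma\varepsilon(\tau),
\]
so $\lambda(\sigma\tau)-\lambda(\sigma)-\lambda(\tau)$ has valuation $\ge m$. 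Since this quantity lies in $\Qp$, it lies in $p^m\Zp$. Hence $\chi=p^{-m}\lambda \bmod \Zp$ is a homomorphism $V\to\Qp/\Zp$. It is continuous, and by compactness of $V$ the continuous map $\lambda$ is bounded, so $\chi$ has finite image.

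\textbf{Different bound.} Assume $\chi\ne0$, put $E=L_\infty^{\ker\chi}$, $Q=V/\ker\chi$ cyclic of order $p^q$, and let $T=p^{-q}\Tr_{E/F}$.
\begin{enumerate}
\item Kill the $\ker\chi$-variation first. For $\sigma\in\ker\chi$ we have $\lambda(\sigma)\in p^m\Zp$, so $v((\sigma-1)x)\ge m$ for all $\sigma\in\ker\chi$.
\item Apply Lemma~\ref{lem:field-ax-sen} over the base $E$, using $(\widehat{L_\infty})^{\ker\chi}=E$. This gives $y\in E$ with $v(x-y)\ge m-C_{\mathrm{Ax}}$.
\item Replace $x$ by $y$. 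Condition \eqref{eq:field-approximate-coboundary} then holds with $m$ weakened to $m-C_{\mathrm{Ax}}$, but now inside $E$.
\item The sharper route keeps $m$ and works with $Q$ acting on $E$. For $\sigma$ with $\chi(\sigma)=1/p^q$, we have $v(\lambda(\sigma))=m-q$.
\item Apply $T$ to the relation $\lambda(\tau)\approx(\tau-1)y$ and average over $\tau\in Q$. This yields $\sum_\tau\lambda(\tau)/p^q\approx T(\cdots)$ and expresses $y$ modulo $F$ through trace-like operators.
\end{enumerate}
The cleanest form, which I will commit to, is to write
\[
y-Ty=\frac{1}{p^q}\sum_{\tau\in Q}(1-\tau)y\approx -\frac1{p^q}\sum_\tau\lambda(\tau).
\]
Here $\sum_\tau\lambda(\tau)\equiv p^m\sum_{k}k/p^q\bmod p^{m}\Zp$, and this sum has valuation about $m-1$ or so.

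The key comparison then goes as follows. Take $w\in E$ with $(\sigma-1)w$ a constant of valuation $m-q$ modulo error $\ge m-C_{\mathrm{Ax}}$. Using \eqref{eq:field-normalized-trace}, we get $v(Tw)\ge v(w)-q+\mathfrak d$. On the other hand, the Artin–Schreier-type equation $(\sigma-1)w=c$ forces $\Tr(w)$ and the values $(\sigma-1)w$ to be related: explicitly,
\[
\sum_{k}\sigma^k w=p^qw+c\binom{p^q}{2}+\cdots .
\]
Comparing valuations gives $\mathfrak d(E/F)\le C_{\mathrm{Ax}}$.

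\textbf{Conductor bound.} Since $E/F$ is a fierce cyclic extension and $\mathfrak d=b-u$ by \eqref{eq:field-different-break}, Lemma~\ref{lem:field-last-lower} gives $u\le h$. Hence
\[
c(\chi)=b(E/F)=\mathfrak d+u\le C_{\mathrm{Ax}}+h,
\]
using \eqref{eq:character-conductor}.

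\textbf{Main obstacle.} The hard step is the precise valuation bookkeeping that turns the Ax--Sen approximation into $\mathfrak d\le C_{\mathrm{Ax}}$. The plan is to follow Sen's classical argument (Fontaine–Ouyang, \S3.3) that a $\Zp$-extension whose character is an approximate coboundary has bounded different. The one modification is to replace the classical Tate trace estimate by \eqref{eq:field-normalized-trace}.
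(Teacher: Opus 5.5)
\textbf{Gap: the different bound $\mathfrak d(E/F)\le C_{\mathrm{Ax}}$ is asserted, not proved.} The following parts are correct and agree with the paper: the additivity of $\lambda$ modulo $p^m\Zp$ (hence a finite-image character), your steps 1--3 (Ax--Sen over $E$ giving $y\in E$ with $v(\lambda(\sigma)-(\sigma-1)y)\geq m-C_{\mathrm{Ax}}$ for all $\sigma\in V$), the observation $v(\lambda(\sigma))=m-q$ when $\chi(\sigma)$ generates the image, and the conductor step via \eqref{eq:field-different-break} and Lemma~\ref{lem:field-last-lower}.

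The different bound is the heart of the lemma, and you only claim it follows by ``comparing valuations''. The route you commit to does not deliver it as written:
\begin{itemize}
\item The approximation $y-Ty\approx-p^{-q}\sum_\tau\lambda(\tau)$ carries no information about $\mathfrak d$ at all.
\item Applying \eqref{eq:field-normalized-trace} to $w=y$ only gives $v(Ty)\geq v(y)-q+\mathfrak d$. This bounds nothing, since neither $v(y)$ nor an upper bound for $v(Ty)$ is controlled; $y$ is only determined up to adding elements of $F$.
\item The claim that $\sum_\tau\lambda(\tau)$ has valuation ``about $m-1$'' is unfounded: for odd $p$ it merely lies in $p^m\Zp$.
\item There is no ``sharper route keeping $m$''. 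The loss of $C_{\mathrm{Ax}}$ in passing from $x$ to $y\in E$ is exactly the constant in the conclusion.
\end{itemize}

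\textbf{The missing idea: apply the trace estimate to the error term, not to $y$.} Since $\ker\chi$ is normal in $V$, each $\sigma\in V$ acts on $E$ through $\Gal(E/F)$. Hence $T\circ(\sigma-1)=0$ on $E$, while $T$ fixes $\lambda(\sigma)\in\Qp\subseteq F$. Therefore
\[
 \lambda(\sigma)=T\bigl(\lambda(\sigma)-(\sigma-1)y\bigr),
 \qquad
 v(\lambda(\sigma))\geq m-C_{\mathrm{Ax}}-q+\mathfrak d(E/F),
\]
the inequality coming from \eqref{eq:field-normalized-trace}. For $\sigma$ with $v(\lambda(\sigma))=m-q$ this reads exactly $\mathfrak d(E/F)\leq C_{\mathrm{Ax}}$. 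This is the paper's argument.

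Your ``Artin--Schreier-type'' expansion of $\sum_k\sigma^k w$, carried out exactly for a generator $\sigma$ and $w=y-Ty$ (which has trace zero), collapses to this same identity. So the detour is unnecessary rather than wrong in principle. Substituting the displayed line for your step 5 and the subsequent ``key comparison'' completes the proof.
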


\begin{proof}
For $e_\sigma=\lambda(\sigma)-(\sigma-1)x$,
\[
 \lambda(\sigma\tau)-\lambda(\sigma)-\lambda(\tau)
 =e_{\sigma\tau}-e_\sigma-\sigma e_\tau\in p^m\Zp.
\]
The right side has valuation at least $m$ and the left side lies in $\Qp$.
Thus $\chi$ is a homomorphism. Since $V$ is compact, the image of $\chi$ is finite cyclic.
If $\chi\ne0$, 
set $s=\min_{\sigma\in V}v(\lambda(\sigma))$. Then $s<m$ and its image has order $p^q$, where $q=m-s$.

Put $J=\ker\chi$. For $\tau\in J$, one has $v((\tau-1)x)\geq m$.
Since the image of $G_E$ on $L_\infty$ is $J$,
Lemma~\ref{lem:field-ax-sen} gives
\[
 y\in E,\qquad v(x-y)\geq m-C_{\mathrm{Ax}},\qquad
 v\bigl(\lambda(\sigma)-(\sigma-1)y\bigr)\geq m-C_{\mathrm{Ax}}.
\]
Here $E/F$ is finite and hence complete. Its normalized trace $T$
annihilates $(\sigma-1)y$, since $J\triangleleft V$, and fixes
$\lambda(\sigma)\in F$. Equation~\eqref{eq:field-normalized-trace}
therefore gives
\[
 v(\lambda(\sigma))\geq m-C_{\mathrm{Ax}}-q+\mathfrak d(E/F).
\]
For an element attaining $s$, this becomes
\[
 s\geq m-C_{\mathrm{Ax}}-(m-s)+\mathfrak d(E/F)
   =s-C_{\mathrm{Ax}}+\mathfrak d(E/F).
\]
Thus $\mathfrak d(E/F)\leq C_{\mathrm{Ax}}$.
Equation~\eqref{eq:field-different-break} and
Lemma~\ref{lem:field-last-lower} give $b(E/F)\leq C_{\mathrm{Ax}}+h$.
The induced character of $\Gal(E/F)$ is faithful, so $c(\chi)=b(E/F)$.
\end{proof}

\begin{lemma}\label{lem:period-inertia}
Let $K$ be a type~2 field over $\Cp$, and let $L/K$ be Galois with
$p$-adic Lie Galois group $D$ and inertia $I$.
If a continuous representation $\rho:D\to\GL_r(\Qp)$ admits
\begin{equation}\label{eq:field-period-matrix}
 M\in\GL_r(\widehat L),\qquad
 \sigma(M)=M\rho(\sigma)\quad(\sigma\in D),
\end{equation}
then $\rho(I)$ is finite. In particular, $d\rho(\Lie I)=0$.
\end{lemma}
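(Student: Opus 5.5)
This is Sen's argument: finiteness of inertia for $\Cp$-admissible representations, transported to the fields in $\mathcal C$.

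\textbf{Reduction to fierce towers.} Apply Lemma~\ref{lem:type2-inertia-reduction}. Put $U=L^I$, $F=\widehat U\in\mathcal C$ and $L'=LF$. Then $\Gal(L'/F)=I$, $\widehat{L'}=\widehat L$, and every finite layer is fierce. Restricting \eqref{eq:field-period-matrix} to $I$ gives a period matrix for $\rho|_I$ over $\widehat{L'}$. Suppose $\rho(I)$ is infinite. Then $\Lie\rho(I)\neq0$. Replace $I$ by an open subgroup and $F$ by the corresponding finite layer; by Lemma~\ref{lem:field-fierce-basis} this layer stays in $\mathcal C$ with fierce layers. After this we may assume the following: $\rho(I)$ is a uniform pro-$p$ group $\exp(\mathfrak l)$ with $[\mathfrak l,\mathfrak l]\subseteq p^3\mathfrak l$ and $d=\operatorname{rank}\mathfrak l>0$. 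Replace the tower by $L_\infty=\widehat{L'}^{\ker\rho}\cap$ (algebraic part). This is legitimate because Lemma~\ref{lem:field-ax-sen} puts the entries of $M$ in $\widehat{L_\infty}$. We are then in setup \eqref{eq:field-uniform-setup}, with $G=\exp(\mathfrak l)\subseteq\GL_r(\Zp)$ and $\rho$ the inclusion. Proposition~\ref{prop:linear-break-growth} gives $b_n=n+c$ for $n\gg0$.

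\textbf{Approximate coboundaries.} Write $\sigma(M)=M\sigma$ for $\sigma\in G(n)$, and approximate $M$ at a deep level. Let $n$ be large. By Lemma~\ref{lem:field-ax-sen} and normalized traces, Lemma~\ref{lem:field-trace-norm} applied to $F_m/F_n$, we find $M_n\in\GL_r(F_n)$ close to $M$, with loss controlled by $C_{\mathrm{Ax}}$ and the differents. The linear growth of $b_n$ is what keeps this loss uniformly bounded. Then $Y=M_n^{-1}M$ satisfies $\sigma(Y)=Y\sigma$ on $G(n)$, with $Y\equiv 1$ modulo a fixed bounded valuation loss. Take the logarithm on $G(n)$, where $\sigma=\exp(p^nX)$ for $X\in\mathfrak l$. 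Choose a linear coordinate $\lambda:\mathfrak l\to\Zp$ with $\lambda(X)$ a coefficient. Comparing $\sigma(Y)-Y=Y(\sigma-1)$ gives a continuous map $V=G(n)\to\Qp$, $\sigma\mapsto p^n\lambda(X_\sigma)$. It satisfies \eqref{eq:field-approximate-coboundary} with $x$ an entry of $Y$ and with $m\approx n+k-C$, where $C$ is independent of $n$.

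\textbf{Contradiction.} Lemma~\ref{lem:bounded-coboundary-conductor} then makes $\chi=p^{-m}\lambda\bmod\Zp$ a character of $G(n)$. It factors through $G(n)/G(n+k)$ and is nonzero for suitable $k$ exceeding the fixed loss. Its conductor relative to $F_n$ is bounded by $C_{\mathrm{Ax}}+h$. On the other hand, \eqref{eq:field-relative-upper} and the linear growth compute it as $u_n+b_{n+k}-b_n\geq k$. Taking $k$ larger than $C_{\mathrm{Ax}}+h+C$ gives the contradiction. Hence $\rho(I)$ is finite, and so $d\rho(\Lie I)=0$.

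The main obstacle is the second step. One must make the descent of $M$ to $F_n$ quantitatively uniform in $n$, i.e.\ run Sen's "normalized trace plus Ax--Sen" decompletion with constants independent of $n$. The lower bound on differents from linear break growth is the intended input there, and verifying the constant bookkeeping, including the nonabelian correction $[\mathfrak l,\mathfrak l]\subseteq p^3\mathfrak l$, is where the argument could fail.
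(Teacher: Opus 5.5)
Your architecture is the paper's: reduce via Lemma~\ref{lem:type2-inertia-reduction} to a uniform fierce tower with $\rho$ the inclusion, descend $M$ to $M_n$ by Ax--Sen, linearize $C_n=M_n^{-1}M$, apply Lemma~\ref{lem:bounded-coboundary-conductor}, and contradict linear break growth. But the final step fails as written.

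You claim that \eqref{eq:field-relative-upper} and linear growth give the conductor of the single-coefficient character $\chi$ as $u_n+b_{n+k}-b_n$. That formula is the last upper break of the whole quotient $G(n)/G(n+k)$ relative to $F_n$. Since $G(n)/\ker\chi$ is cyclic while $G(n)/G(n+k)\simeq(\Z/p^k\Z)^d$, for $d\geq2$ the kernel of $\chi$ is strictly larger than $G(n+k)$. Quotient compatibility then only gives $c(\chi)\leq b(G(n)/G(n+k))$, which is the wrong direction. A fixed coefficient need not detect the last break of the $d$-dimensional window.

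The missing step is to use all coefficient characters $\chi_{ij,n}=p^{-m}(\log\sigma)_{ij}\bmod\Zp$ at once. Each bound $c(\chi_{ij,n})\leq C_{\mathrm{Ax}}+h$ says that $\chi_{ij,n}$ vanishes on $G(n)[a]$ for every $a>C_{\mathrm{Ax}}+h$. Hence $\log G(n)[a]\subseteq p^m\bigl((\Qp\mathfrak l)\cap M_r(\Zp)\bigr)\subseteq p^{m-e}\mathfrak l$, where $e$ is a fixed commensurability exponent. So $G(n)[a]\subseteq G(m-e)$, and the last break of $G(n)/G(m-e)$ over $F_n$ is at most $C_{\mathrm{Ax}}+h$. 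On the other hand, \eqref{eq:field-relative-upper} and Proposition~\ref{prop:linear-break-growth} make this break $u_n+m-e-n$. That is a contradiction once $m-n$ exceeds a fixed constant. This is exactly the paper's argument with $R_n=G[a_n]$ and $R_n\subseteq G(2n-C')$.

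The step you single out as the main obstacle is not one. Normalized traces, differents and linear growth play no role there. For $\sigma\in G(n)$ one has $\sigma(M)-M=M(\sigma-1)=O(p^n)$, uniformly in $n$. The group $G_{F_n}$ maps onto $G(n)$, and the constant $C_{\mathrm{Ax}}$ in Lemma~\ref{lem:field-ax-sen} does not depend on the field. So entrywise Ax--Sen directly gives $M_n\in M_r(F_n)$ with $M_n-M=O(p^n)$. Hence $C_n=1+O(p^n)$ and $\sigma(C_n)-C_n=(\sigma-1)+(C_n-1)(\sigma-1)=\log\sigma+O(p^{2n})$. This gives precision $m_n=2n-C$, which makes $m-n$ arbitrarily large and fixes the unspecified $k$ in your ``$m\approx n+k-C$''.
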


\begin{proof}
Set $U=L^I$, $F=\widehat U$, and $L'=LF$.
Lemma~\ref{lem:type2-inertia-reduction} gives $F\in\mathcal C$,
$\Gal(L'/F)=I$, $\widehat{L'}=\widehat L$, and fierce finite layers.
For $N=\ker(\rho|_I)$, Lemma~\ref{lem:field-ax-sen} gives
$M\in\GL_r(\widehat{(L')^N})$.
Replace the tower by $(L')^N/F$, whose group is the compact matrix
group $\rho(I)$. Suppose $\dim\rho(I)>0$.
Choose a sufficiently small open normal uniform subgroup
$G=\exp(\mathfrak l)$ with $[\mathfrak l,\mathfrak l]\subseteq p^3\mathfrak l$.
Its fixed field $F_0$ is finite Galois over $F$ and belongs to $\mathcal C$.
Denote this tower by $L_\infty/F_0$; its finite layers remain fierce.
The representation is now the matrix inclusion, with $\sigma(M)=M\sigma$.
Use $G(n),F_n,b_n,u_n$ from \eqref{eq:field-uniform-setup}.

For $\sigma\in G(n)$, one has $\sigma-1=O(p^n)$ and
$\sigma(M)-M=O(p^n)$, with constants uniform in $n,\sigma$.
Since $G_{F_n}$ has image $G(n)$ on $L_\infty$, entrywise Ax--Sen gives
\[
 M_n\in M_r(F_n),\qquad M_n-M=O(p^n).
\]
For large $n$, the inverses $M_n^{-1}$ are uniformly bounded. Hence
\[
 C_n=M_n^{-1}M=1+O(p^n),\qquad \sigma(C_n)=C_n\sigma,
\]
and
\begin{equation}\label{eq:field-linearized-period}
 \sigma(C_n)-C_n
 =(\sigma-1)+(C_n-1)(\sigma-1)=\log\sigma+O(p^{2n}).
\end{equation}
Set $x_{ij,n}=(C_n)_{ij}$ and
$\lambda_{ij,n}(\sigma)=(\log\sigma)_{ij}$.
There is a common integer $C\geq0$ such that
\[
 v\bigl(\lambda_{ij,n}(\sigma)-(\sigma-1)x_{ij,n}\bigr)
 \geq m_n,\qquad m_n=2n-C,
\]
for all coordinates and $\sigma\in G(n)$, once $n$ is large.
Lemma~\ref{lem:bounded-coboundary-conductor}, over $F_n$, gives
\begin{equation}\label{eq:field-coordinate-conductors}
 \chi_{ij,n}=p^{-m_n}\lambda_{ij,n}\bmod\Zp,\qquad
 c(\chi_{ij,n})\leq C_{\mathrm{Ax}}+h.
\end{equation}
These conductors are relative to $F_n$; set $c(0)=0$.

By Proposition~\ref{prop:linear-break-growth}, $b_n=n+c$ for large $n$.
Fix $B>C_{\mathrm{Ax}}+h$ and put $a_n=n+c+B+1$ and $R_n=G[a_n]$.
Since $a_n>b_n$, one has $R_n\subseteq G(n)$. Herbrand compatibility
and \eqref{eq:herbrand-tail} give
\[
 R_n=G(n)[\Psi_n(a_n)],\qquad
 \Psi_n(a_n)=u_n+a_n-b_n=u_n+B+1>B.
\]
Thus all $\chi_{ij,n}$ vanish on $R_n$, and
\begin{equation}\label{eq:field-deep-matrix-log}
 \log R_n\subseteq p^{m_n}M_r(\Zp).
\end{equation}
The two lattices $\mathfrak l$ and
$(\Qp\mathfrak l)\cap M_r(\Zp)$ are commensurable. Choose a fixed
integer $b\geq0$ with
$(\Qp\mathfrak l)\cap M_r(\Zp)\subseteq p^{-b}\mathfrak l$.
For $C'=C+b$, equation~\eqref{eq:field-deep-matrix-log} yields
$R_n\subseteq G(2n-C')$. Therefore the upper group of $G/G(2n-C')$
is trivial at $a_n$, and
\[
 b_{2n-C'}<a_n.
\]
But Proposition~\ref{prop:linear-break-growth} makes this
$2n-C'+c<n+c+B+1$, a contradiction for large $n$.
Thus $\dim\rho(I)=0$; compactness implies that $\rho(I)$ is finite.
\end{proof}

\begin{corollary}\label{cor:perfectoid-period-obstruction}
Let $F$ be a type~2 field over $\Cp$, and let $L/F$ be Galois with
compact $p$-adic Lie group $D$ and inertia $I$.
If a continuous $\rho:D\to\GL_r(\Qp)$ has injective differential and a matrix as in
\eqref{eq:field-period-matrix}, then $I$ is finite and $\widehat L$
is not perfectoid.
\end{corollary}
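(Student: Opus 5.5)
The plan has two parts. First, Lemma~\ref{lem:period-inertia} shows that $I$ is finite. Second, a residue-field argument shows that $\widehat L$ cannot then be perfectoid.

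For the first part, Lemma~\ref{lem:period-inertia} applies directly to $\rho$ and $M$ and gives $\rho(I)$ finite, so $d\rho(\Lie I)=0$. The differential $d\rho$ is injective on $\Lie D$, and $\Lie I\subseteq\Lie D$ because $I$ is a closed subgroup. Hence $\Lie I=0$. A compact $p$-adic Lie group with zero Lie algebra is discrete, so $I$ is finite.

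For the second part, put $U=L^I$. The extension $U/F$ is unramified, so $\widehat U$ is the completion of an unramified Lie extension of the type~2 field $F$. Since $I$ is finite, $L/U$ is a finite Galois extension, so $\widehat L=L\widehat U$ is a finite extension of $\widehat U$. The plan is to show that no finite extension of $\widehat U$ is perfectoid, by exhibiting an obstruction in the residue field. By Lemma~\ref{lem:AS-Lie}, $\kappa(\widehat U)=\kappa(U)$ satisfies $[\kappa:\kappa^p]=p$, so it is not perfect. By Lemma~\ref{lem:unramified-approximation}, the class $\mathcal C$ is closed under finite extensions; hence $\widehat L\in\mathcal C$, and its residue field also has $p$-rank one and is imperfect. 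The same conclusion follows more elementarily: a finite extension of a field of $p$-rank one again has $p$-rank one. On the other hand, the residue field of a perfectoid field is perfect. Indeed, Frobenius is surjective on $\OO/p$, and this ring surjects onto the residue field. So $\widehat L$ is not perfectoid.

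The only point that needs care is the identification of the residue field of $\widehat L$. Completion does not change the residue field, and $\kappa(L)$ is a finite extension of $\kappa(U)$, because the residue degree is bounded by $[L:U]=|I|$. This gives $p$-rank exactly one, in particular nonzero, without needing the full strength of Lemma~\ref{lem:unramified-approximation}. I expect this step to be routine; the substantial input is Lemma~\ref{lem:period-inertia}, which is already available.
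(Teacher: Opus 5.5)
Your proposal is correct and follows essentially the same route as the paper: Lemma~\ref{lem:period-inertia} together with injectivity of $d\rho$ on $\Lie D$ forces $\Lie I=0$, hence $I$ is finite. The $p$-rank-one residue field of $U=L^I$, which is preserved under the finite extension $L/U$ and under completion, then contradicts perfectness of the residue field of a perfectoid field. As you observe, the elementary $p$-basis argument suffices, and the paper likewise does not invoke Lemma~\ref{lem:unramified-approximation} at this step.
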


\begin{proof}
Lemma~\ref{lem:period-inertia} gives $\Lie I=0$, so $I$ is finite.
Put $U=L^I$. The residue field $\kappa(F)$ has $p$-rank one;
the separable algebraic extension $\kappa(U)/\kappa(F)$ preserves a
$p$-basis. Since $L/U$ is finite, $\kappa(L)/\kappa(U)$ is finite and
also preserves $p$-rank. Thus $[\kappa(L):\kappa(L)^p]=p$.
Completion leaves this residue field unchanged, whereas a perfectoid
field has perfect residue field.
\end{proof}

\subsection{Surjectivity on the curve}\label{sec:curve-conclusion}

\begin{proof}[Proof of Theorem~\ref{thm:curve}]
Suppose that the geometric Sen map is not surjective at a type~2
point. Proposition~\ref{prop:type2-reduction} gives a type~2 field
$K$, a Galois extension $L/K$ with compact $p$-adic Lie group $D$,
and a continuous representation $\rho:D\to\GL_r(\Qp)$ with
injective differential, together with
\[
 M\in\GL_r(\widehat L),\qquad
 \sigma(M)=M\rho(\sigma)\quad(\sigma\in D).
\]
It also gives that $\widehat L$ is perfectoid, contradicting
Corollary~\ref{cor:perfectoid-period-obstruction}.
Thus the Sen map is surjective
at every type~2 point.

By Lemma~\ref{lem:analytic-rank-locus}, its nonsurjectivity locus
is closed analytic. Every nonempty open subset of a smooth
$\Cp$-curve contains a type~2 point, so this locus contains no
irreducible component. A proper closed analytic subset of a smooth
curve is locally finite and consists of classical points. Its
complement is therefore Zariski open and dense, and the subset is
finite if $X$ is quasicompact.
\end{proof}

\section{Locally analytic decompletion and Sen surjectivity}
In this section, we study the relation between Sen surjectivity
and locally analytic decompletion.
We first recall the announced comparison results that yield
the implication from Sen surjectivity to locally analytic
decompletion. We then prove the converse implication
independently of these comparisons.
\subsection{The analytic Hodge--Tate stack and decompletion}
\label{sec:forward}

The analytic Hodge--Tate stack $X^{\mathrm{HT}}$ is the Hodge--Tate
part of the analytic prismatization of Ansch\"utz, Le Bras,
Rodr\'\i guez Camargo, and Scholze
\cite{AnschutzLeBrasRodriguezCamargoScholzePrismatization}.

We use the following comparison from the joint work
\cite{AnschutzLeBrasRodriguezCamargoScholzePrismatization}, announced in
\cite[Section~1.1]{RodriguezCamargoCartierDuality}.

\begin{theorem}\label{thm:ht-perfect-comparison}
There is a natural symmetric monoidal equivalence
\begin{equation}\label{eq:ht-perfect-comparison}
 \mathrm D_{\mathrm{perf}}(X^{\mathrm{HT}})
 \simeq \mathrm D_{\mathrm{perf}}(X_v,\Oh_X),
\end{equation}
compatible with pullback and restricting to
\[
 \Vect(X^{\mathrm{HT}})\simeq\Vect(X_v,\Oh_X).
\]
\end{theorem}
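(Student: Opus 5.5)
We will not reprove this comparison from scratch. The plan is to derive it from two inputs: the structure of the analytic prismatization on perfectoid test objects, and descent on both sides. Concretely, we compare both categories with a limit over a hypercover by affinoid perfectoid spaces.

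First, we construct the functor. For an affinoid perfectoid $S=\Spa(R,R^+)$ over $X$, the analytic prismatization construction attaches to $(R,R^+)$ its perfect prism. Its Hodge--Tate locus is canonically $\Spa(R,R^+)$ itself, viewed as an analytic space; the Breuil--Kisin twist is trivialized after choosing the perfectoid structure. This gives maps $S\to X^{\mathrm{HT}}$, functorial in $S$ and compatible with $S\to X$. Pullback along them sends a perfect complex on $X^{\mathrm{HT}}$ to a compatible family of perfect $R$-complexes. By $v$-descent for perfect complexes over affinoid perfectoids (the derived form of \cite[Theorem~17.1.3 and Lemma~17.1.8]{ScholzeWeinstein2020}), this family is an object of $\mathrm D_{\mathrm{perf}}(X_v,\Oh_X)$. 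Symmetric monoidality and compatibility with pullback along $X'\to X$ hold by construction, since everything is defined by pullback.

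Second, we prove the functor is an equivalence. Both sides satisfy descent, so we may assume $X$ is small affinoid. Fix an affinoid perfectoid $v$-cover $S_0\to X$; the toric tower $X_\infty$ suffices. We will show three things. First, $S_0\to X^{\mathrm{HT}}$ is a cover of analytic stacks for which $\mathrm D_{\mathrm{perf}}$ satisfies descent. Second, the iterated fibre products $S_0\times_{X^{\mathrm{HT}}}\cdots\times_{X^{\mathrm{HT}}}S_0$ are affinoid perfectoid. Third, these fibre products agree with the fibre products over $X^\diamond$, up to the action of the Hodge--Tate group $\mathbb G_a^\sharp\otimes T_X(1)$ in the relevant directions. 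Granting this, both sides become the same cosimplicial limit of $\mathrm D_{\mathrm{perf}}(S_n)$, and the functor induces the equivalence. The restriction to $\Vect$ then follows because being finite locally free is checked $v$-locally on perfectoids and is detected on the Čech nerve.

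The main obstacle is the second and third points: identifying $S_0\times_{X^{\mathrm{HT}}}S_0$ with $S_0\times_{X^\diamond}S_0$. This is where the analytic, as opposed to the Bhatt--Lurie, Hodge--Tate stack is needed. On the algebraic side, the difference is a divided-power completion of the tangent directions. Analytically, this should collapse to the $v$-sheaf fibre product. Our first approach is to compute it in the toric case: there $X^{\mathrm{HT}}\simeq[X_\infty/(\Zp^d\ltimes\mathbb G_a^{\sharp,d})]$ locally, and we expect the analytic $\mathbb G_a^\sharp$ to be absorbed into the locally analytic $\Zp^d$-action. We then reduce the general case to this one using the chart. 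If this computation fails, we fall back on citing \cite{AnschutzLeBrasRodriguezCamargoScholzePrismatization} for the cover and descent statements.
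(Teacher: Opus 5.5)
\textbf{There is a genuine gap.} The paper gives no proof of this statement. It imports it from \cite{AnschutzLeBrasRodriguezCamargoScholzePrismatization}, as announced in \cite[Section~1.1]{RodriguezCamargoCartierDuality}, so your fallback of citing that work is exactly the paper's route. The reduction you sketch before the fallback, however, rests on a false step.

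By the Cartesian square \eqref{eq:ht-construction}, $X^{\mathrm{HT}}\to X$ is a gerbe banded by $T_X^\dagger(1)$. So for your lift $S_0\to X^{\mathrm{HT}}$, the map $S_0\times_{X^{\mathrm{HT}}}S_0\to S_0\times_XS_0\simeq S_0\times\Zp^d$ is a torsor under the overconvergent germ $T_X^\dagger(1)$, which is not perfectoid. Your second point therefore fails. Even the charitable reading of your third point (a torsor over the $v$-fibre product) does not give ``the same cosimplicial limit'': the two \v{C}ech nerves differ in every positive degree.

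Your heuristic that the divided-power directions ``collapse'' analytically is backwards. Replacing $\mathbb G_a^\sharp$ by the larger germ $\mathbb G_a^\dagger$ keeps these directions. What it removes is the topological nilpotence condition on Sen operators present in the Bhatt--Lurie setting, which is why an equivalence with all of $\mathrm D_{\mathrm{perf}}(X_v,\Oh_X)$ is possible at all. Your toric description also contradicts your second point: $X_\infty\to[X_\infty/H]$ has \v{C}ech nerve $X_\infty\times H^n$, which is non-perfectoid once $H$ contains $\mathbb G_a^{\sharp,d}$. It also has the wrong band. Theorem~\ref{thm:ht-quotient} presents $X^{\mathrm{HT}}$ through the locally analytic model $X_\infty^{\mathrm{an}}$ and $\Gamma^{\mathrm{an}}$, not through the perfectoid $X_\infty$.

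The missing idea is the step that carries all the content. Over a toric chart, $\mathrm D_{\mathrm{perf}}(X_v,\Oh_X)$ consists of perfect $A_{\mathrm{tor}}$-complexes with continuous semilinear $\Gamma$-action. Descent along $S_0\to X^{\mathrm{HT}}$, which is itself an input you would have to justify, describes $\mathrm D_{\mathrm{perf}}(X^{\mathrm{HT}})$ as the same data together with a compatible action of the stabilizer $T_X^\dagger(1)$. By Cartier duality this is a Higgs-field-type operator with values in $\Omega^1_{X/\Cp}(-1)$, with no nilpotence condition, and your functor simply forgets it.

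Proving the functor is an equivalence means showing that every continuous semilinear $\Gamma$-representation carries a unique compatible operator of this kind, functorially, so that equivariant maps automatically commute with it. That is relative Sen theory: decompletion to a finite-level module $P_n$ with $A_{\mathrm{tor}}\otimes_{A_n}P_n\simeq M$, followed by differentiation of the $\Gamma$-action. This is recalled in Subsection~\ref{sec:sen-operators} and in the remark following the statement, via \cite[Theorem~2.4.4 and Proposition~3.2.3]{RodriguezCamargo2026}. It must then be extended from vector bundles to perfect complexes and glued across charts.

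Without this step your functor is constructed, but neither full faithfulness nor essential surjectivity is addressed. Either supply the Sen-theoretic argument or, as the paper does, cite the comparison outright.
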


\begin{remark}
On a toric chart, the vector-bundle comparison is realized by relative
Sen decompletion: one descends from the completed toric tower to a
finite-level Sen module and differentiates its $\Gamma$-action.
The resulting commuting operators give the corresponding Higgs field.
See
\cite[Theorem~2.4.4 and Proposition~3.2.3]{RodriguezCamargo2026}.
\end{remark}

For a perfectoid profinite \'etale $G$-torsor $\widetilde X\to X$, write
$G^{\mathrm{an}}$ and $\widetilde X^{\mathrm{an}}$ for the locally analytic
group stack and the locally analytic model of the tower in the following
comparison. We use Rodr\'\i guez Camargo's announced characterization
\cite[approximately 1:15:00]{RodriguezCamargoGeometricSenLecture}.

\begin{theorem}\label{thm:ht-quotient}
There is a canonical morphism of analytic stacks
\begin{equation}\label{eq:ht-comparison}
 \alpha_\pi:X^{\mathrm{HT}}\longrightarrow
       [\widetilde X^{\mathrm{an}}/G^{\mathrm{an}}].
\end{equation}
It is an isomorphism if and only if $\theta_{\widetilde X}$ is
surjective.
\end{theorem}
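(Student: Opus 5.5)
The plan is to compare both sides as stacks over $X$ and reduce the isomorphism question to a statement about their bands. First I would construct $\alpha_\pi$ locally on a small affinoid $X$ with toric chart and then glue, using the chart-independence of the Sen construction (\cite[Proposition~3.2.3]{RodriguezCamargo2026}). The input is that $X^{\mathrm{HT}}\to X$ is a gerbe banded by the analytic vector group $T_X^\sharp(1)$: the Hodge--Tate structure map realizes it, and its split form over the toric tower is visible through the Sen decompletion recalled after Theorem~\ref{thm:ht-perfect-comparison}. On the other side, $[\widetilde X^{\mathrm{an}}/G^{\mathrm{an}}]\to X$ is obtained from the locally analytic model of the tower: over the perfectoid cover $\widetilde X$ it becomes the quotient of $G^{\mathrm{an}}$ by the germ of $G$, that is, a classifying stack for the analytic completion of $\mathfrak g_\pi$ along $\Oh_X$.

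A morphism between these two objects is then a homomorphism of bands
\[
T_X^\sharp(1)\longrightarrow(\Oh_X\otimes_{\Qp}\mathfrak g_\pi)^{\mathrm{an}}
\]
over the $v$-site, together with a descent datum. I would identify this homomorphism with the dual Sen map $\theta_{\widetilde X}^\vee$. The test for this identification is Theorem~\ref{thm:geometric-sen}: for a locally analytic representation $V$, pulling back the bundle $\Oh_X\otimes V_\pi$ along $\alpha_\pi$ must produce the Higgs field $(\mathrm{id}\otimes\theta_{\widetilde X})\circ d\rho$, and on $X^{\mathrm{HT}}$ Higgs fields of vector bundles are exactly $T_X^\sharp(1)$-actions. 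This is what defines $\alpha_\pi$ canonically, and it gives compatibility with pullback by diagram~\eqref{eq:sen-functoriality}.

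For the equivalence I would argue fiberwise over geometric points $\Spa(C,C^+)\to X$. There both stacks become quotients. The fiber of $\alpha_\pi$ is the homogeneous space
\[
(\mathfrak g\otimes C)^{\mathrm{an}}\big/\theta^\vee\bigl(T_X(1)\otimes C\bigr)^\sharp ,
\]
modulo the kernel of $\theta^\vee$. This fiber is trivial precisely when $\theta^\vee$ is injective with image equal to the whole germ direction that the quotient retains. I also have to check that the locally analytic quotient collapses all of $\mathfrak g$ except the image of $\theta^\vee$. I would prove that by Sen decompletion: the locally analytic vectors of $\Oh(\widetilde X)$ are annihilated exactly by the Sen operators, which is the annihilation identity mentioned in the introduction. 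Consequently $\alpha_\pi$ is an isomorphism iff $\theta^\vee$ is a closed immersion of vector groups, that is, iff $\theta_{\widetilde X}$ is surjective. Isomorphism can be checked $v$-locally by descent, which finishes the argument.

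The main obstacle is the identification of $\widetilde X^{\mathrm{an}}$ with the locally analytic quotient. Concretely, it is showing that taking $G^{\mathrm{an}}$-quotients commutes with the relevant completions and that the stabilizer is exactly the annihilator of $\Oh^{\la}$. Here I would first treat the toric chart case by hand using $A_{\mathrm{tor}}$ and $P_n$. I would then transport the result along $\widetilde X\times_X X_\infty$, using perfectoidness of $\widetilde X$ to make the fiber product affinoid perfectoid.
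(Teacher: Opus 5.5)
The paper does not prove this statement. It quotes it from Rodr\'\i guez Camargo's announced work, uses it only to deduce Corollary~\ref{prop:forward}, and proves its main theorems independently of it. So your sketch must stand on its own, and it has a genuine gap at the step you call the main obstacle, which is in fact the whole content.

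You describe the target over $\widetilde X$ both as the quotient of $G^{\mathrm{an}}$ by the germ of $G$ and as a classifying stack for the germ of $\mathfrak g_\pi\otimes\Oh_X$. These are different objects: the first has trivial stabilizers. If the target were a gerbe banded by all of $\mathfrak g_\pi$, a morphism of gerbes would be an isomorphism only when $\theta_{\widetilde X}^\vee$ is \emph{bijective}. That is impossible whenever $\dim G>\dim X$, for instance for the toric $\Zp^2$-tower restricted to a smooth closed curve in $\TT^2$, whose Sen map is surjective by \eqref{eq:sen-functoriality}. So everything rests on two claims: the stabilizers of $[\widetilde X^{\mathrm{an}}/G^{\mathrm{an}}]$ are exactly cut out by $\theta_{\widetilde X}^\vee$, and the remaining directions act freely with quotient $X$. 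The annihilation identity \eqref{eq:sen-annihilation} gives only one inclusion, namely that Sen directions kill locally analytic vectors. The converse, that nothing else kills them and the free part has quotient $X$, is a decompletion-type statement (compare Corollary~\ref{thm:equivalence}), and ``treat the toric chart by hand'' does not supply it.

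Even granting such a description, your last step conflates injectivity of $\theta_{\widetilde X}^\vee$ with its being a closed immersion of vector groups. If the stabilizer were literally $\im\theta_{\widetilde X}^\vee$, the band map $\Oh_X\otimes_{\OO_X}T_X(1)\to\im\theta_{\widetilde X}^\vee$ would be an isomorphism as soon as $\theta_{\widetilde X}^\vee$ is injective as a map of sheaves. In the torsor of Theorem~\ref{thm: perfd not surjective}, \eqref{eq:image-theta} and the toric Sen isomorphism give $\theta_{\widetilde X}^\vee(\partial_t)=t\,D_0$ with $D_0$ nowhere vanishing. So $\theta_{\widetilde X}^\vee$ is injective but not fibrewise injective at $t=0$, and your argument would produce an isomorphism that the theorem forbids. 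The ``only if'' direction needs the stabilizer to be a subbundle of $\mathfrak g_\pi\otimes\Oh_X$, hence different from $\im\theta_{\widetilde X}^\vee$ at nonsurjective points, and that must be proved.

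Three further steps are unsupported.
\begin{enumerate}
\item A band homomorphism does not define a morphism of gerbes. You must check that $\theta_{\widetilde X}^\vee$ carries the class $\eta_{\mathrm{HT}}$ of \eqref{eq:ht-construction} to the class of the target. Lifts, when they exist, then form a torsor under an $H^1$, so canonicity is not automatic. Your recipe via pullbacks of $\Oh_X\otimes V_\pi$ is only a Tannakian characterization; turning it into a morphism of analytic stacks needs exactly the reconstruction theorem that the paper's final remark says is unavailable.
\item The band of $X^{\mathrm{HT}}\to X$ is the overconvergent germ $T_X^\dagger(1)$, not $T_X^\sharp(1)$.
\item Isomorphisms of analytic stacks are not detected on geometric points or $v$-locally. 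Germs such as $\mathbb G_a^\dagger$ have a single point, and Theorem~\ref{thm:ht-perfect-comparison} matches only perfect complexes.
\end{enumerate}
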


\begin{corollary}\label{prop:forward}
If $\theta_{\widetilde X}$ is surjective, then pullback gives an
equivalence
\begin{equation}\label{eq:vb-decompletion}
 \Vect([\widetilde X^{\mathrm{an}}/G^{\mathrm{an}}])
 \simeq \Vect(X_v,\Oh_X).
\end{equation}
In the affinoid realization of this comparison, let $X$ be small,
let $\widetilde X$ be affinoid perfectoid, and put
$B=\Oh_X(\widetilde X)$. For every finite projective continuous
semilinear $B$-representation $W$ of $G$, the module $W^{\la}$ is
finite projective over $B^{\la}$ and multiplication is an isomorphism
\begin{equation}\label{eq:decompletion}
 B\otimes_{B^{\la}}W^{\la}\xrightarrow{\ \sim\ }W.
\end{equation}
\end{corollary}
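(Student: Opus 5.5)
The plan is to combine Theorem~\ref{thm:ht-quotient} with Theorem~\ref{thm:ht-perfect-comparison}. Assuming $\theta_{\widetilde X}$ is surjective, Theorem~\ref{thm:ht-quotient} makes $\alpha_\pi$ an isomorphism. Pullback along $\alpha_\pi^{-1}$ then gives
\[
 \Vect([\widetilde X^{\mathrm{an}}/G^{\mathrm{an}}])\simeq\Vect(X^{\mathrm{HT}}),
\]
and composing with the vector-bundle part of Theorem~\ref{thm:ht-perfect-comparison} yields \eqref{eq:vb-decompletion}. Both functors are pullbacks, so the composite is again pullback along the natural map $X_v\to[\widetilde X^{\mathrm{an}}/G^{\mathrm{an}}]$. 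This map should factor through $[\widetilde X/G]$. The first assertion is therefore formal.

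For the affinoid realization, take $X$ small and $\widetilde X$ affinoid perfectoid. By $v$-descent for vector bundles on affinoid perfectoid spaces \cite[Theorem~17.1.3 and Lemma~17.1.8]{ScholzeWeinstein2020}, $\Vect(X_v,\Oh_X)$ is equivalent to the category of finite projective continuous semilinear $B$-representations $W$ of $G$, via $\mathcal F\mapsto\mathcal F(\widetilde X)$. On the other side, I would identify vector bundles on $[\widetilde X^{\mathrm{an}}/G^{\mathrm{an}}]$ with $G^{\mathrm{an}}$-equivariant finite projective modules over $\OO(\widetilde X^{\mathrm{an}})=B^{\la}$. Concretely, these are finite projective $B^{\la}$-modules $W_0$ carrying a locally analytic semilinear $G$-action.

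In these terms, the pullback functor is $W_0\mapsto B\otimes_{B^{\la}}W_0$. Given $W$, essential surjectivity produces some $W_0$ with $B\otimes_{B^{\la}}W_0\simeq W$. It remains to show $W_0=W^{\la}$, and this is the step I expect to be the main obstacle. My first attempt is to show that full faithfulness identifies $W_0$ with the locally analytic vectors. The inclusion $W_0\subseteq W^{\la}$ is clear. For the reverse inclusion, use the internal Hom $\underline{\Hom}(\mathbf 1,W)$ on the stack $[\widetilde X^{\mathrm{an}}/G^{\mathrm{an}}]$. Computing this through the equivalence should show that locally analytic sections of $W$ come from $W_0$, once $W_0$ is expressed in terms of locally analytic functions on $G^{\mathrm{an}}$.

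Alternatively, one can argue via $(B\otimes_{B^{\la}}W_0)^{\la}=(B^{\la})\otimes_{B^{\la}}W_0$. This uses that $W_0$ is finite projective, hence a direct summand of a free module, and that taking $(-)^{\la}$ commutes with finite direct sums. Given this identity, $W^{\la}=W_0$ is finite projective over $B^{\la}$, and multiplication is the isomorphism \eqref{eq:decompletion}.
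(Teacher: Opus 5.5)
Your treatment of the first assertion is the paper's: under surjectivity, Theorem~\ref{thm:ht-quotient} makes $\alpha_\pi$ an isomorphism, and composing $\alpha_\pi^*$ with the vector-bundle part of Theorem~\ref{thm:ht-perfect-comparison} gives \eqref{eq:vb-decompletion}. Your description of the two sides in the affinoid case is also what the paper uses, as part of the announced comparison input. On one side, $v$-descent gives continuous semilinear $B$-representations. On the other, you have $G^{\mathrm{an}}$-equivariant finite projective $B^{\la}$-modules, with pullback $W_0\mapsto B\otimes_{B^{\la}}W_0$.

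The divergence is at the step you call the main obstacle. The paper does not derive $W_0=W^{\la}$. It states that the comparison input already includes the fact that the locally analytic model of $W$ has module of sections $W^{\la}$, and then reads off \eqref{eq:decompletion}. You try to prove this identification, and argument (b) fails as written:
\begin{itemize}
\item A decomposition $W_0\oplus W_0'\simeq(B^{\la})^n$ is only $B^{\la}$-linear, not $G$-stable.
\item The action on $B\otimes_{B^{\la}}W_0$ is the diagonal one, $g(b\otimes w)=g(b)\otimes g(w)$. So the fact that $(-)^{\la}$ commutes with finite direct sums does not apply; it only applies to $G$-stable decompositions.
\item Even for free $W_0$ with basis $e_1,\ldots,e_n$, one has $g(e_i)=\sum_j c_{ji}(g)e_j$ with a nontrivial cocycle $c$. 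Then $W\simeq B^n$ does not carry the coordinatewise action, so the equality $(B^n)^{\la}=(B^{\la})^n$ is not the relevant computation.
\end{itemize}
Argument (a) is too unspecific to check.

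The step can be repaired by a dual-basis argument. Equip $W_0^\vee=\Hom_{B^{\la}}(W_0,B^{\la})$ with $(g\phi)(x)=g(\phi(g^{-1}x))$. It is again a bundle on the quotient stack, so $g\mapsto g\phi$ is locally analytic. Choose $e_i\in W_0$ and $\phi_i\in W_0^\vee$ with $x=\sum_i\phi_i(x)e_i$, and extend the $\phi_i$ $B$-linearly to $W$; the identity persists on $W=B\otimes_{B^{\la}}W_0$. For $w\in W^{\la}$,
\[
 g\bigl(\phi_i(w)\bigr)=(g\phi_i)(gw)
\]
is locally analytic in $g$, because the evaluation pairing with $W$ is continuous and bilinear. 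Hence $\phi_i(w)\in B^{\la}$ and $w=\sum_i\phi_i(w)e_i\in W_0$.

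In the free case this amounts to $g(\vec b)=c(g)^{-1}\bigl(c(g)\,g(\vec b)\bigr)$, where both factors are locally analytic. With this repair, or by citing the identification as comparison input as the paper does, your conclusion follows: $W^{\la}=W_0$ is finite projective and \eqref{eq:decompletion} is an isomorphism.
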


\begin{proof}
Combine $\alpha_\pi^*$ with the vector-bundle restriction of
Theorem~\ref{thm:ht-perfect-comparison}.
Descent along $\widetilde X\to X$ identifies the right side of
\eqref{eq:vb-decompletion} with $G$-equivariant vector bundles on
$\widetilde X$, and the quotient on the left identifies its bundles with
$G^{\mathrm{an}}$-equivariant bundles on $\widetilde X^{\mathrm{an}}$.
In the affinoid vector-bundle realization of the announced comparison,
the locally analytic model associated with $W$ has module of sections
$W^{\la}$, finite projective over $B^{\la}$, and pullback to the completed
tower is extension of scalars along $B^{\la}\to B$.
These affinoid identifications are included in the comparison input.
Its pullback identification is precisely the multiplication map
\eqref{eq:decompletion}.
\end{proof}

\begin{remark}
We recall the description of $X^{\mathrm{HT}}$ for smooth $X$ from
\cite[Definition~3.4.1 and Remark~3.4.2]{RodriguezCamargoCartierDuality}.
Put
\[
 T_X=(\Omega^1_{X/\Cp})^\vee,\qquad
 T_X^\dagger(1)=
 T_X^{\mathrm{an}}(1)\otimes_{\mathbb G_a^{\mathrm{an}}}
                         \mathbb G_a^\dagger,
\]
where $\mathbb G_a^\dagger$ is the overconvergent neighbourhood of zero.
For a partially proper base, the comparison
\[
 R\Gamma_v(X,\Oh_X)
   \simeq R\Gamma_!(X,\mathbb G_a^{\mathrm{an,dR}})
\]
transports the Hodge--Tate class to $!$-cohomology. The boundary for
\[
 \mathbb G_a^\dagger\longrightarrow\mathbb G_a^{\mathrm{an}}
     \longrightarrow\mathbb G_a^{\mathrm{an,dR}},
\]
gives $\eta_{\mathrm{HT}}\in H^2_!(X,T_X^\dagger(1))$.
For general $X$, this construction is performed on Huber's
compactification and pulled back to $X$, as in the cited
\cite[Remark~3.4.2]{RodriguezCamargoCartierDuality}. On an affinoid $X$, the initial class corresponds to
$\mathrm{id}_{\Omega^1_X}$ under
\[
 H^1_v\bigl(X,T_X(1)\otimes_{\OO_X}\Oh_X\bigr)
   \simeq H^0\bigl(X,T_X\otimes_{\OO_X}\Omega^1_{X/\Cp}\bigr).
\]
The stack is defined by the Cartesian square
\begin{equation}\label{eq:ht-construction}
\begin{CD}
 X^{\mathrm{HT}} @>>> X\\
 @VVV @VV{e}V\\
 X @>{\eta_{\mathrm{HT}}}>> B^2T_X^\dagger(1),
\end{CD}
\end{equation}
where $e$ is the zero section; thus $X^{\mathrm{HT}}\to X$ is a
$BT_X^\dagger(1)$-torsor.
\end{remark}

\subsection{Decompletion implies surjectivity}\label{sec:converse}

\begin{theorem}
\label{thm:decompletion-surjectivity}
Let $X=\Spa(A,A^\circ)$ be a small smooth affinoid of dimension $d$
over $\Cp$, let $G$ be a compact $p$-adic Lie group, and let
$\widetilde X\to X$ be a diamondian affinoid perfectoid profinite
\'etale $G$-torsor. Put $B=\Oh_X(\widetilde X)$. Suppose that, for
every finite free $B$-module $W$ with continuous semilinear
$G$-action, the multiplication map
\[
 B\otimes_{B^{\la}}W^{\la}\longrightarrow W
\]
is surjective. Then $\theta_{\widetilde X}$ is surjective on
$X_{\proet}$.
\end{theorem}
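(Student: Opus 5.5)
The plan is to test decompletion on one explicit module built from the toric chart: the Kummer (Faltings-type) unipotent local system. The Sen annihilation identity will show that the Higgs field of this module, evaluated on locally analytic vectors, factors through $\theta_{\widetilde X}$. Surjectivity of the multiplication map will then force every $\omega_j$ into the image.

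\textbf{Step 1: the test module.} Fix the toric chart $X\to\TT^d_{\Cp}$ and let $V$ be the rank-$(d+1)$ unipotent $\Qp$-local system on $X_{\proet}$ with basis $e_0,e_1,\ldots,e_d$. Let $\gamma\in\pi_1$ act by $\gamma e_0=e_0$ and $\gamma e_j=e_j+c_j(\gamma)e_0$, where $c_j$ is the Kummer cocycle of $T_j$, i.e.\ the image of $\gamma$ in the $j$-th factor of $\Gamma=\Zp^d$. Put $\mathcal V=\Oh_X\otimes_{\Qp}V$. From the local construction in Subsection~\ref{sec:sen-operators} (the operators $\nabla_j$ act as $d\eta_j$ on a finite-level lattice), its Higgs field is
\[
 \theta_{\mathcal V}(e_0)=0,\qquad \theta_{\mathcal V}(e_j)=e_0\otimes\omega_j\quad(1\le j\le d).
\]
Put $W=\mathcal V(\widetilde X)$. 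By $v$-descent for vector bundles on the affinoid perfectoid space representing $\widetilde X^\diamond$ \cite[Theorem~17.1.3 and Lemma~17.1.8]{ScholzeWeinstein2020}, $W$ is a finite projective $B$-module with continuous semilinear $G$-action, and $e_0,\ldots,e_d$ give it the structure of an iterated extension of copies of $B$. I expect to arrange freeness, after possibly shrinking $X$ and passing to a finite level of the tower, so that the hypothesis applies. If freeness is awkward, the hypothesis can be applied to $W\oplus W'$ with $W'$ a complement, which is compatible with taking locally analytic vectors.

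\textbf{Step 2: Sen annihilation.} For $w\in W^{\la}$, write $w=\sum_k a_ke_k$ with $a_k\in B$. The key identity is the semilinear analogue of the formula in Theorem~\ref{thm:geometric-sen}:
\[
 \theta_{\mathcal V}(w)=\sum_i \nabla_{\xi_i}(w)\otimes\theta_{\widetilde X}(\xi_i^\vee),
\]
where $\{\xi_i\}$ is a basis of $\mathfrak g$ and $\nabla_{\xi_i}$ is the derivative of the $G$-action on the locally analytic vector $w$. This will be proved on a finite-level model. One uses the joint decompletion of $\mathcal V$ along the product tower $\widetilde X\times_X X_\infty$ with group $G\times\Gamma$. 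On $(G\times\Gamma)$-locally analytic vectors the $\Gamma$-derivations defining $\theta_{\mathcal V}$ are expressed through the $G$-derivations via $\theta_{\widetilde X}$. This is exactly the local description of $\theta_{\widetilde X}$ in \cite[Proposition~3.2.4]{RodriguezCamargo2026}, transported from functions on $G$ to $W$. One then passes to $G\times\Gamma$-invariants. Comparing the $e_0\otimes\Omega^1$-components of both sides gives
\[
 \sum_j a_j\,\omega_j\in \im\bigl(\Gamma(\widetilde X,\theta_{\widetilde X})\bigr)\subseteq B\otimes_A\Omega^1_{A/\Cp}(-1)
\]
for every $w\in W^{\la}$.

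\textbf{Step 3: conclusion.} The map $W\to B^d$, $w\mapsto(a_1,\ldots,a_d)$, is $B$-linear and surjective, since it sends $e_j$ to the $j$-th basis vector. By hypothesis, $B\otimes_{B^{\la}}W^{\la}\to W$ is surjective, so the images of elements of $W^{\la}$ span $B^d$ over $B$. Since the image of $\theta_{\widetilde X}$ on sections is a $B$-submodule, it contains each $\omega_j$. By the toric Sen isomorphism the $\omega_j$ form a $B$-basis of $B\otimes_A\Omega^1(-1)$, so $\Gamma(\widetilde X,\theta_{\widetilde X})$ is surjective. Rational localization of $X$ preserves all hypotheses, so the same holds on every rational subset. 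Surjectivity on $X_{\proet}$ then follows by $v$-descent, as at the end of the proof of Theorem~\ref{thm: perfd not surjective}.

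\textbf{Main obstacle.} I expect Step~2 to be the main difficulty. The identity in Theorem~\ref{thm:geometric-sen} is stated only for $\mathcal V=\Oh_X\otimes V_\pi$ with $V$ a representation of $G$, whereas here $V$ is a representation of $\pi_1$ not factoring through $G$. The formula must therefore be proved for locally analytic vectors of an arbitrary semilinear $B$-representation. I would first try to reduce to the joint $(G\times\Gamma)$-locally analytic vectors on the product tower, which is a diamondian affinoid perfectoid torsor for $G\times\Gamma$. There I would show that on $\jla$ vectors the Sen operator $\theta_{\mathcal V}$ is the composite of $d_G$ with $\theta_{\widetilde X}$. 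The subtlety is that the product tower is only diamondian perfectoid, so the finite-level descent of \cite[Theorem~2.4.4]{RodriguezCamargo2026} is applied to $\Gamma$ over $B$ rather than over $A$.
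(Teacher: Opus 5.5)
Your test module (the unipotent Kummer local system from the toric chart) and your final generation argument are the same as the paper's. As written, however, the argument has a gap at exactly the step you flag, and that gap is tied to an error about where the basis vectors live.

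\textbf{Where the coefficients live.} The vectors $e_1,\dots,e_d$ are sections of $\mathcal V$ over the toric tower, not over $\widetilde X$: in general $\widetilde X\to X$ does not trivialize the Kummer local system. So writing $w=\sum_k a_ke_k$ with all $a_k\in B$ is wrong. If all $a_k$ lay in $B$, $\Gamma$-invariance of $w$ would force $a_1=\cdots=a_d=0$. Your map $w\mapsto(a_1,\dots,a_d)$ would then be zero, and ``it sends $e_j$ to the $j$-th basis vector'' has no meaning, since $e_j\notin W$.

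The correct bookkeeping is over $S=\Oh_X(\widetilde X\times_XX_\infty)$, with $W=(S\otimes_{\Qp}V)^\Gamma$. The coefficients $b_1,\dots,b_d$ of $w$ lie in $S^\Gamma=B$, but $b_0\in S$ is not invariant: $\gamma(b_0)=b_0-\sum_i\gamma_ib_i$. Surjectivity of $q\colon W\to B^d$ and freeness of $W$ both come from $H^1_{\mathrm{cont}}(\Gamma,S)=0$, i.e.\ $v$-acyclicity of the affinoid perfectoid $P$. This gives $h_i\in S$ with $\gamma(h_i)-h_i=-\gamma_i$, and $e_0,\ e_i+h_ie_0$ is then a $B$-basis of $W$. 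No shrinking is needed. Your fallback $W\oplus W'$ would in any case require a $G$-stable complement.

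\textbf{Step 2.} You propose a semilinear Sen formula for locally analytic vectors of an arbitrary $B$-representation, via relative Sen descent for $\Gamma$ over $B$. This is not carried out, and the obstacle you anticipate is misdiagnosed. The space $\widetilde X_\infty=\widetilde X\times_XX_\infty$ is a profinite \'etale cover of the affinoid perfectoid toric tower, so it is itself affinoid perfectoid in $X_{\proet}$. No descent over the perfectoid ring $B$ is needed.

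The missing idea is that the single non-invariant scalar $b_0$ does all the work.
\begin{itemize}
\item The orbit formula $(g,\gamma)(b_0)=g(b_0)-\sum_i\gamma_i\,g(b_i)$ shows that $b_0\in S^{\jla}$.
\item Differentiating $\gamma(b_0)=b_0-\sum_i\gamma_ib_i$ gives $\eta_j(b_0)=-b_j$.
\item By functoriality along the two projections, the Sen map of the $(G\times\Gamma)$-torsor $\widetilde X_\infty\to X$ is $(\lambda,\mu)\mapsto\theta_{\widetilde X}(\lambda)+\theta_{X_\infty}(\mu)$.
\item The Sen annihilation theorem \cite[Corollary~3.2.6]{RodriguezCamargo2026} for this torsor, applied to $b_0$, gives $b_j=\sum_\ell a_{j\ell}\xi_\ell(b_0)$. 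Equivalently, $\sum_jb_j\omega_j=\theta_{\widetilde X,S}\bigl(\sum_\ell\xi_\ell(b_0)\xi_\ell^\vee\bigr)$.
\end{itemize}
This is exactly the $e_0$-component of the identity you wanted, obtained from a scalar statement with no semilinear Sen theory.

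\textbf{The final localization.} Your appeal to rational localization at the end is unnecessary, and the decompletion hypothesis for $B$ does not obviously pass to rational subsets anyway. A surjection of free modules over $S$ splits. It therefore remains surjective over every object above the pro-\'etale cover $\widetilde X_\infty\to X$, which is all that is needed.
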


We prepare the comparison of sections needed in the proof.
Let $X=\Spa(A,A^\circ)$ be small and let $\widetilde X\to X$ be a
diamondian affinoid perfectoid profinite \'etale $G$-torsor.
Let $P$ be the affinoid perfectoid space representing
$\widetilde X^\diamond$, and put $B=\Oh_X(\widetilde X)$.
For a fixed toric chart on $X$, use the tower $X_\infty$ and the
notation $A_{\mathrm{tor}}$, $\Gamma$, $\eta_j$, and $\omega_j$
of Subsection~\ref{sec:sen-operators}.

To compare the torsor map with the toric operators, put
\[
 \widetilde X_\infty=\widetilde X\times_X X_\infty,\qquad
 S=\Oh_X(\widetilde X_\infty).
\]
The $G$ and $\Gamma$ actions on $S$ commute.
The object $\widetilde X_\infty$ is affinoid perfectoid in
$X_{\proet}$: it is a profinite \'etale refinement of the affinoid
perfectoid toric tower, so this follows from
\cite[Lemma~4.5\textup{(i)} and the proof of Lemma~4.6]{Scholze2013}.
Let $Z$ be its associated affinoid perfectoid space. Then
\[
 Z^\diamond\simeq
 P^\diamond\times_{X^\diamond}X_\infty^\diamond,
\]
and $Z\to P$ is a $\Gamma$-torsor for the $v$-topology.

\begin{lemma}\label{lem:invariants}
There are canonical topological isomorphisms
\begin{equation}\label{eq:invariants}
 B\simeq\OO(P)\simeq S^\Gamma.
\end{equation}
For every finite projective $B$-module $W$, the natural map
$W\to(S\otimes_BW)^\Gamma$ is a topological isomorphism.
Moreover,
\begin{equation}\label{eq:diamondian-cech-vanishing}
 H^1_{\mathrm{cont}}(\Gamma,S)=0.
\end{equation}
\end{lemma}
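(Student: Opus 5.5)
The plan is to work on the $v$-site and reduce every assertion to $v$-descent for the $v$-sheaf $\Oh$ and its vector bundles over affinoid perfectoid spaces \cite[Theorem~17.1.3 and Lemma~17.1.8]{ScholzeWeinstein2020}, applied to the $v$-cover $Z\to P$.

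\emph{First isomorphism.} Since $\widetilde X^\diamond$ is represented by $P$, I would compute the sections of $\Oh_X$ on $\widetilde X$ as a pro-\'etale object by pulling back to the $v$-site, where they become $\Oh(P^\diamond)=\OO(P)$. This uses the comparison between pro-\'etale and $v$-sections of $\Oh$, which should follow from $v$-descent for $\Oh$ along a pro-\'etale cover of $\widetilde X$ by affinoid perfectoids. Topologically, both sides are Banach spaces and the map is a continuous bijection, so it is a homeomorphism by the open mapping theorem.

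\emph{Second isomorphism.} Since $Z\to P$ is a $\Gamma$-torsor for the $v$-topology with $\Gamma$ profinite, there is an identification
\[
Z\times_PZ\simeq Z\times\underline\Gamma .
\]
Its affinoid perfectoid ring is $\mathcal C(\Gamma,S)$, and more generally the $n$-fold fibre products have ring $\mathcal C(\Gamma^{n},S)$. The $v$-\v{C}ech complex of $\Oh$ for $Z\to P$ is therefore the continuous cochain complex $C^\bullet_{\mathrm{cont}}(\Gamma,S)$. By $v$-descent (the sheaf property), $H^0$ of this complex gives $\OO(P)=S^\Gamma$. By $v$-acyclicity of $\Oh$ on the affinoid perfectoid spaces $P$, $Z$ and $Z\times_P Z$, the \v{C}ech-to-derived spectral sequence collapses, so the \v{C}ech cohomology computes $H^i_v(P,\Oh)$, which is almost zero for $i>0$. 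After inverting $p$ it vanishes, which gives \eqref{eq:diamondian-cech-vanishing}.

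\emph{Statement for $W$.} For finite projective $W$, I would write $W$ as a direct summand of $B^n$. Both $W\mapsto W$ and $W\mapsto(S\otimes_BW)^\Gamma$ are additive functors commuting with finite direct sums and retracts. So the claim reduces to $W=B$, where it is the second isomorphism, and topologies again follow from the open mapping theorem.

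\emph{Main obstacle.} The delicate point is the first step: identifying the pro-\'etale sections $\Oh_X(\widetilde X)$ with $\OO(P)$ when $\widetilde X$ is only diamondian affinoid perfectoid, so that its finite-level completion need not be perfectoid. There I would argue through a pro-\'etale affinoid perfectoid cover of $\widetilde X$, for instance $\widetilde X_\infty$ itself. Its sections are $S$, and the \v{C}ech complex of $\widetilde X_\infty\to\widetilde X$ is the same continuous cochain complex $C^\bullet_{\mathrm{cont}}(\Gamma,S)$. Hence $\Oh_X(\widetilde X)=S^\Gamma$ directly, and comparing with the $v$-computation identifies both with $\OO(P)$.
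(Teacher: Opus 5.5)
Your proposal is correct and essentially follows the paper's proof. The paper also uses the \v{C}ech nerve of $\widetilde X_\infty\to\widetilde X$, with terms $\widetilde X_\infty\times\Gamma^n$ and sections $C^0(\Gamma^n,S)$, to get $B=S^\Gamma$; $v$-descent along $Z\to P$ to get $\OO(P)=S^\Gamma$; $v$-acyclicity on affinoid perfectoids to get $H^1_{\mathrm{cont}}(\Gamma,S)=H^1_v(P,\OO)=0$; and idempotent summands of $B^r$ for $W$. You differ only in handling the topologies by the open mapping theorem where the paper runs the same equalizers for $\Oh_X^+$ and $\OO^+$, and $H^i_v(P,\OO)$ is already zero for $i>0$ (it is $\OO^+$ that is only almost acyclic), so no inversion of $p$ is needed.
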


\begin{proof}
The \v Cech nerve of the pro-\'etale cover
$\widetilde X_\infty\to\widetilde X$ has terms
$\widetilde X_\infty\times\Gamma^n$ for $n\geq0$.
Each term is affinoid perfectoid and has completed sections
$C^0(\Gamma^n,S)$ by \cite[Lemma~4.10\textup{(iii)}]{Scholze2013}.
Its associated diamond is the corresponding term
$Z\times\Gamma^n$ of the \v Cech nerve of $Z\to P$.
Taking the sheaf equalizer on $X_{\proet}$ gives $B=S^\Gamma$,
while $v$-descent for the structure sheaf gives
$\OO(P)=S^\Gamma$
\cite[Theorem~17.1.3]{ScholzeWeinstein2020}.
The same equalizers for $\Oh_X^+$ and $\OO^+$ identify
$\Oh_X^+(\widetilde X)$ with $\OO^+(P)$, so the identifications
respect the natural topologies. In particular, $B$ is a perfectoid
Banach $\Cp$-algebra. Expressing a finite projective $W$ as an
idempotent summand of $B^r$ then gives its asserted invariant
identification, including the topology.

Every term $Z\times\Gamma^n$ is affinoid perfectoid and hence
acyclic for $\OO$ on the $v$-site. Thus the continuous cochain
complex $C^0(\Gamma^\bullet,S)$ computes $R\Gamma_v(P,\OO)$.
Since $P$ is affinoid perfectoid, the same acyclicity theorem gives
\[
 H^1_{\mathrm{cont}}(\Gamma,S)=H^1_v(P,\OO)=0.
\]
\end{proof}

Choose a basis $\xi_1,\ldots,\xi_r$ of $\Lie G$. After pullback to
$\widetilde X_\infty$, write
\begin{equation}\label{eq:sen-matrix}
 \theta_{\widetilde X,S}(\xi_\ell^\vee)
   =\sum_j a_{j\ell}\omega_j,\qquad a_{j\ell}\in S.
\end{equation}
The two projections of the $(G\times\Gamma)$-torsor
$\widetilde X_\infty\to X$ and \eqref{eq:sen-functoriality}
identify its Sen map with
\[
 (\lambda,\mu)\longmapsto
 \theta_{\widetilde X,S}(\lambda)+\theta_{X_\infty,S}(\mu).
\]
Its dual sends the vector dual to $\omega_j$ to
$\sum_\ell a_{j\ell}\xi_\ell+\eta_j$.
The Sen annihilation theorem
\cite[Corollary~3.2.6]{RodriguezCamargo2026} therefore gives
\begin{equation}\label{eq:sen-annihilation}
 \eta_j(s)+\sum_\ell a_{j\ell}\xi_\ell(s)=0
 \qquad\bigl(s\in S^{\jla}\bigr).
\end{equation}

\begin{proof}[Proof of Theorem~\ref{thm:decompletion-surjectivity}]
Consider the representation of $\Gamma=\Zp^d$
\[
 V=\Qp e_0\oplus\bigoplus_{i=1}^d\Qp e_i,
 \qquad \gamma(e_0)=e_0,
 \qquad \gamma(e_i)=e_i+\gamma_i e_0.
\]
Let $\mathbb V$ be its local system via $X_\infty\to X$, with the
associated-local-system convention giving the diagonal action on a
trivializing cover. Set $\mathcal E=\Oh_X\otimes_{\Qp}\mathbb V$.
There is an exact sequence
\[
 0\longrightarrow\Oh_Xe_0\longrightarrow\mathcal E
 \xrightarrow{q}\Oh_X^{\,d}\longrightarrow0.
\]
Sheaf descent along $\widetilde X_\infty\to\widetilde X$ identifies
\[
 W:=\mathcal E(\widetilde X)
   =(S\otimes_{\Qp}V)^\Gamma,
\]
where $\Gamma$ acts diagonally. For each $i$, the map
$\gamma\mapsto-\gamma_i$ is a continuous $S$-valued cocycle.
Lemma~\ref{lem:invariants} gives $h_i\in S$ satisfying
\[
 \gamma(h_i)-h_i=-\gamma_i\qquad(\gamma\in\Gamma).
\]
The vectors $e_0$ and $f_i=e_i+h_ie_0$ are $\Gamma$-invariant
and form an $S$-basis of $S\otimes_{\Qp}V$. Since $S^\Gamma=B$,
they form a $B$-basis of $W$. Consequently,
\begin{equation}\label{eq:test-sequence}
 0\longrightarrow Be_0\longrightarrow W\xrightarrow{q}B^d
 \longrightarrow0.
\end{equation}
The vectors $f_i$ split this sequence, so $W$ is finite free of
rank $d+1$. The action of $G$ on coefficients commutes with
$\Gamma$ and fixes the $e_i$. For $g\in G$,
\[
 g(f_i)=f_i+\bigl(g(h_i)-h_i\bigr)e_0,
 \qquad g(h_i)-h_i\in S^\Gamma=B.
\]
Since $B$ has the subspace topology from $S$, these formulas give
$W$ a continuous semilinear $G$-action. The same basis gives
\begin{equation}\label{eq:test-trivialization}
 S\otimes_BW\simeq S\otimes_{\Qp}V.
\end{equation}
Here $G$ acts on coefficients and fixes the $e_i$, while $\Gamma$
acts on both coefficients and $V$. Sections of $W$ are
$\Gamma$-invariant in \eqref{eq:test-trivialization}.

For $w\in W^{\la}$ write
\[
 w=b_0e_0+\sum_i b_ie_i\quad\text{in }S\otimes_{\Qp}V.
\]
The continuous $G$-equivariant coefficient maps show that every $b_i$
is $G$-locally analytic. Invariance under $\Gamma$ gives
\begin{equation}\label{eq:unipotent-coefficients}
 \gamma(b_i)=b_i\ (i\ge1),\qquad
 \gamma(b_0)=b_0-\sum_i\gamma_i b_i.
\end{equation}
Thus $b_i\in S^\Gamma=B$ for $i\ge1$ and
$q(w)=(b_1,\ldots,b_d)$. The explicit orbit formula
\[
 (g,\gamma)(b_0)=g(b_0)-\sum_i\gamma_i g(b_i)
\]
proves that $b_0\in S^{\jla}$. Differentiating
\eqref{eq:unipotent-coefficients} gives
\begin{equation}\label{eq:coefficient-derivative}
 \eta_j(b_0)=-b_j.
\end{equation}
Apply \eqref{eq:sen-annihilation} to $b_0$ and use
\eqref{eq:coefficient-derivative}. For each $j$ this gives
\[
 0=\sum_\ell a_{j\ell}\xi_\ell(b_0)+\eta_j(b_0)
   =\sum_\ell a_{j\ell}\xi_\ell(b_0)-b_j.
\]
Equivalently,
\begin{equation}\label{eq:analytic-image}
 \sum_j b_j\omega_j
 =\theta_{\widetilde X,S}
   \left(\sum_\ell\xi_\ell(b_0)\xi_\ell^\vee\right).
\end{equation}
Thus the image of $q(W^{\la})$, viewed in $S^d$ using the basis
$\omega_j$, lies in $\im\theta_{\widetilde X,S}$.

By hypothesis, $W^{\la}$ generates the module $W$ in
\eqref{eq:test-sequence} over $B$. Since $q$ is surjective,
$q(W^{\la})$ generates $B^d$, hence generates $S^d$ after scalar
extension. Equation~\eqref{eq:analytic-image} implies that
$\theta_{\widetilde X,S}$ is surjective. The map
$\widetilde X_\infty\to X$ is a pro-\'etale cover, so surjectivity
of \eqref{eq:sen-map} follows by the locality of surjectivity for the
pro-\'etale topology.
\end{proof}

\begin{remark}\label{rem:counterexample-decompletion}
For the counterexample constructed in \S \ref{subsec: construction counter ex},
Theorem~\ref{thm:decompletion-surjectivity} shows that locally
analytic decompletion fails near the origin. More precisely,
after restricting to a small affinoid neighborhood of the origin,
there exists a finite free continuous semilinear representation
that is not generated by its locally analytic vectors.

\end{remark}

\begin{corollary}\label{thm:equivalence}
Let $X=\Spa(A,A^\circ)$ be a small smooth affinoid over $\Cp$, and let
$G$ be a compact $p$-adic Lie group. Let $\widetilde X\to X$ be a
profinite \'etale $G$-torsor which is affinoid perfectoid as an object
of $X_{\proet}$ in the sense of
\cite[Definition~4.3\textup{(i)}]{Scholze2013}, and put
$B=\Oh_X(\widetilde X)$. The following conditions are equivalent.
\begin{enumerate}
\renewcommand{\labelenumi}{(\arabic{enumi})}
\item The map $\theta_{\widetilde X}$ is surjective on $X_{\proet}$.
\item For every finite projective $B$-module $W$ with continuous
semilinear $G$-action, $W^{\la}$ is finite projective over $B^{\la}$ and
the multiplication map
\[
 B\otimes_{B^{\la}}W^{\la}\xrightarrow{\ \sim\ }W
\]
is an isomorphism.
\item For every finite free $B$-module $W$ with continuous semilinear
$G$-action, the multiplication map
$B\otimes_{B^{\la}}W^{\la}\to W$ is surjective.
\end{enumerate}
\end{corollary}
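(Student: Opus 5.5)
We prove the cycle of implications $(1)\Rightarrow(2)\Rightarrow(3)\Rightarrow(1)$. The implication $(1)\Rightarrow(2)$ is the affinoid realization in Corollary~\ref{prop:forward}. There the hypotheses are exactly ours: $X$ is small, and $\widetilde X$ is affinoid perfectoid in the sense of \cite[Definition~4.3(i)]{Scholze2013}, so that $B=\Oh_X(\widetilde X)$ is the completed ring of the tower. Applying that corollary to an arbitrary finite projective continuous semilinear $B$-representation $W$ gives both the finite projectivity of $W^{\la}$ over $B^{\la}$ and the isomorphism $B\otimes_{B^{\la}}W^{\la}\xrightarrow{\sim}W$. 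This step uses only the announced comparison inputs of Subsection~\ref{sec:forward}, and we do not reprove them.

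The implication $(2)\Rightarrow(3)$ is formal. A finite free $B$-module is finite projective, and an isomorphism is in particular surjective.

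For $(3)\Rightarrow(1)$ we apply Theorem~\ref{thm:decompletion-surjectivity}. Its hypothesis asks only for surjectivity of multiplication on finite free test modules, which is exactly condition (3). Its standing assumption is that $\widetilde X\to X$ is a diamondian affinoid perfectoid torsor, so we must check that affinoid perfectoidness in Scholze's sense implies this. That implication is recorded after Definition~\ref{def:four-perfectoid-notions}. Concretely, if the completed tower $\Spa(A_\infty,A_\infty^+)$ is affinoid perfectoid, then $\widetilde X\sim\varprojlim_\lambda X_\lambda$ gives $\Spa(A_\infty,A_\infty^+)^\diamond\simeq\varprojlim X_\lambda^\diamond$ \cite[Proposition~2.4.5]{ScholzeWeinstein2020}. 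Lemma~\ref{lem:integral-uniform-comparison} identifies $A_\infty$ with the uniform completion, so $B$ coincides with the ring used in Theorem~\ref{thm:decompletion-surjectivity}. With this identification, surjectivity of $\theta_{\widetilde X}$ on $X_{\proet}$ follows.

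The only point requiring care is the bookkeeping in the last step. We must make sure that the module $B$, the $G$-action, and the notion of continuity in (3) coincide with those in Theorem~\ref{thm:decompletion-surjectivity}. This holds because Lemma~\ref{lem:invariants} identifies $B\simeq\OO(P)$ topologically when $P$ is the affinoid perfectoid space attached to the tower. Beyond this identification, all the mathematical content lies in the two cited results.
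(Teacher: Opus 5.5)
Your proposal is correct and follows the paper's proof exactly. Corollary~\ref{prop:forward} gives (1)$\Rightarrow$(2), the implication (2)$\Rightarrow$(3) is formal, and Theorem~\ref{thm:decompletion-surjectivity} gives (3)$\Rightarrow$(1); your check that affinoid perfectoidness in Scholze's sense implies that theorem's diamondian affinoid perfectoid hypothesis only makes explicit the implication the paper records after Definition~\ref{def:four-perfectoid-notions}.
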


\begin{proof}
Corollary~\ref{prop:forward} gives \textup{(1)} implies \textup{(2)},
and \textup{(2)} implies \textup{(3)} immediately.
Theorem~\ref{thm:decompletion-surjectivity} gives \textup{(3)}
implies \textup{(1)}.
\end{proof}

\begin{remark}
For affinoid perfectoid torsors, the converse should have a
Tannakian interpretation: an appropriate reconstruction theorem
for analytic stacks would recover $\alpha_\pi$ from its symmetric
monoidal pullback on vector bundles. It is presently unclear to
us how to apply such a reconstruction theorem to this setting.
Theorem~\ref{thm:decompletion-surjectivity} uses a single unipotent
test representation.
\end{remark}

\begingroup
\raggedright
\raggedbottom
\bibliographystyle{amsalpha}
\bibliography{geometric_sen}
\endgroup
\end{document}